\documentclass[12pt]{article}
\textheight=23cm \voffset=-2.0cm \textwidth=17cm \hoffset=-2.4cm

\newcommand{\ds}{\displaystyle}

\usepackage{graphics}
\usepackage{graphicx}
\usepackage{color}

\usepackage{amsfonts}
\usepackage{eufrak}

\usepackage{amsmath}
\usepackage{amstext}
\usepackage{amsopn}
\usepackage{amsbsy}
\usepackage{amscd}
\usepackage{amsxtra}
\usepackage{amsthm}
\usepackage{enumerate}

\numberwithin{equation}{section}

\renewcommand{\abstractname}

\title{Orbital stability of bound states of nonlinear
Schr\"{o}dinger equations with linear and nonlinear optical
lattices}

\author{Tai-Chia Lin  \thanks{ Department of Mathematics, National Taiwan
University, Taipei, Taiwan 106. Email: tclin@math.ntu.edu.tw}
\thanks{Taida Institute of Mathematical Sciences (TIMS), Taipei, Taiwan
}\,,
 Juncheng Wei
\thanks{Department of Mathematics, The Chinese University of Hong Kong,
Shatin, Hong Kong. Email: wei@math.cuhk.edu.hk}\ and Wei Yao
\thanks{Department of Mathematics, The Chinese University of Hong
Kong, Shatin, Hong Kong. Email: wyao@math.cuhk.edu.hk } }
\date{}

\newtheorem{thm}{Theorem}[section]

\newtheorem{lem}[thm]{Lemma}

\begin{document}
\maketitle

\begin{abstract}
We study the orbital stability and instability of single-spike
bound states of semi-classical nonlinear Schr\"{o}dinger (NLS)
equations with critical exponent, linear and nonlinear optical
lattices (OLs). These equations may model two-dimensional
Bose-Einstein condensates in linear and nonlinear OLs. When linear
OLs are switched off, we derive the asymptotic expansion formulas
and obtain necessary conditions for the orbital stability and
instability of single-spike bound states, respectively. When
linear OLs are turned on, we consider three different conditions
of linear and nonlinear OLs to develop mathematical theorems which
are most general on the orbital stability problem.
\end{abstract}

\section{Introduction}
 \ \ \ \ Recently, optical lattices have created many interesting
phenomena in Bose-Einstein condensates (BECs) and attracted a
great deal of attention. Two types of optical lattices are
considered: a linear optical lattice (OL) (cf.~\cite{[1]}) and a
nonlinear OL (cf.~\cite{[8]} and \cite{[7]}). A linear OL is a
series of potential wells having a periodic (in space) intensity
pattern which may confine atoms of BECs in the potential minima. A
nonlinear OL can be obtained by inducing a periodic spatial
variation of the atomic scattering length, leading to a periodic
space modulation of the nonlinear coefficient in the
Gross-Pitaevskii equation (GPE) governing the dynamics of BECs.
The GPE is a nonlinear Schr\"{o}dinger (NLS) equation in the
presence of the Kerr nonlinearity describing a BEC in a linear and
a nonlinear OL given by
\begin{equation}\label{gp1} -i\frac{\partial\psi}{\partial
t}=D\Delta\psi-V_{trap}\psi-g|\psi|^2\psi\,,
\end{equation} for $x\in\mathbb{R}^N$, $N\leq 3$ and $t>0$. Here
$\psi=\psi(x,t)\in\mathbb{C}$ is the wavefunction, $D$ is the
diffraction (or dispersion) coefficient, and $V_{trap}$ is the
potential of the linear lattice. Besides, $g=\mu m(x)\sim a$
characterizes the nonlinear lattice, where $a$ denotes the
spatially modulated scattering length, $\mu$ is a nonzero constant
and $m(x)=m(x_1,\cdots,x_N)>0$ is a function depending on spatial
variables (transverse coordinates)
$x_1,\cdots,x_N$~(cf.~\cite{ASK}, \cite{CLS}).

The underlying dynamics of (\ref{gp1}) is dominated by the
interplay between adjacent potential wells of linear OLs and
nonlinearity of nonlinear OLs. When the nonlinearity is
self-focusing i.e.~$D>0$ and $\mu<0$, a balance between these two
effects may resist collapse or decay and result in bright
solitons. Experimentally, bright solitons can be observed in
linear and nonlinear OLs, respectively. One may find stable bright
solitons in three-dimensional linear OLs (cf.~\cite{CTW}). On the
other hand, two-dimensional bright solitons can also be
investigated in two-dimensional nonlinear OLs (cf.~\cite{FSE}).
Consequently, under the influence of linear and nonlinear OLs,
two-dimensional bright solitons must have suitable stability for
experimental observations. However, most theoretical results
(e.g.~\cite{FSW} and \cite{FSW2}) focus on the orbital (dynamical)
stability of only one-dimensional single-spike bound states which
are steady state bright solitons in one-dimensional nonlinear OLs
without the effect of linear OLs. To see how linear and nonlinear
OLs affect the stability of two-dimensional single-spike bound
states, we develop mathematical theorems for the orbital stability
and instability of two-dimensional single-spike bound states of
(\ref{gp1}) under different conditions of linear and nonlinear
OLs.

To get two-dimensional single-spike bound states of (\ref{gp1}),
we may assume $N=2$, $D>0$ and the scattering length $a$,
i.e.,~$\mu$ is negative and large due to the Feshbach resonance
(cf.~\cite{DCC}). Setting $h^2=D/(-\mu)$,
$V(x)=V_{trap}(x)/(-\mu)$ and suitable time scale, the
equation~(\ref{gp1}) with negative and large $\mu$ can be
equivalent to a semi-classical nonlinear Schr\"{o}dinger
equation~(NLS) given by
\begin{align}\label{id1.1-1} -ih\frac{\partial\psi}{\partial
t}=h^2\Delta\psi-V\,\psi+m\,|\psi|^2\psi,\quad x\in\mathbb{R}^2\,,
t>0\,,
\end{align} where $0<h\ll 1$ is a small parameter, $V=V(x)$ is a
smooth nonnegative function and $m=m(x)$ is a smooth positive
function. For the spatial dimension $N\geq 1$, we may generalize
the equation~(\ref{id1.1-1}) to a NLS having the following form
\begin{align}\label{id1.1}
-ih\frac{\partial\psi}{\partial
t}=h^2\Delta\psi-V\,\psi+m\,|\psi|^{p-1}\psi,\quad
x\in\mathbb{R}^N\,, t>0\,,
\end{align} with critical exponent
\begin{align}\label{id1.2}
p=1+\frac{4}{N}\,,\quad N\geq 1\,.
\end{align} In particular, when $N=2$, the equation (\ref{id1.1}) with (\ref{id1.2}) is exactly
same as (\ref{id1.1-1}).

Single-spike bound states of (\ref{id1.1}) are of the form
$\psi(x,t)=e^{i\lambda\,t/h}u(x)$, where $\lambda$ is a positive
constant and $u=u(x)$ is a positive solution of the following
nonlinear elliptic equation
\begin{align}\label{id1.3}
h^2\Delta u-\left(V+\lambda\right)u+m\,u^p=0\,,\quad u\in
H^1(\mathbb{R}^N)\,,
\end{align} with zero Dirichlet boundary condition,
i.e.,~$u(x)\rightarrow 0$ as $|x|\rightarrow\infty$. When $V\equiv
0$ and $m\equiv 1$, problem~(\ref{id1.3}) admits a unique radially
symmetric ground state which is stable for any $\lambda>0$ if $p
<1+\frac{4}{N}$, and unstable for any $\lambda>0$ if $ p \geq
1+\frac{4}{N}$ (cf.~\cite{[BC]}, \cite{[CL]} and \cite{[We]}). For
$V\not\equiv 0$ or $m\not\equiv 1$, there exists $u_h$ a
single-spike solution of (\ref{id1.3}), provided both $V$ and $m$
are bounded and satisfy another conditions, for example, conditions in the following Theorem \ref{thm1.1}-\ref{thm1.4} (cf.~\cite{[G1]}). For other other nonlinearity in the possibly degenerate setting, see \cite{[abc]}, \cite{[FW]}, \cite{[Gu]}, \cite{[PF1]}, \cite{[PF2]}, \cite{[R1]}, \cite{[WXZ]}, \cite{[WZ]}, \cite{wdirichlet} and reference therein. Hereafter, we set $\psi_h(x,t):=e^{i\lambda t/h}u_h(x)$ as a
single-spike bound state of (\ref{id1.1}), where $u_h$ is the
single-spike solution of (\ref{id1.3}).

In this paper, we want to study the orbital stability of the bound
state $\psi_h$ for the equation~(\ref{id1.1}) with critical
exponent {(\ref{id1.2}). One may regard the bound state $\psi_h$
as an orbit of (\ref{id1.1}). From~\cite{[GS1]}, the orbital
stability of $\psi_h$ is defined as follows: For all $\epsilon>0$,
there exists $\delta>0$ such that if $\|\psi_0-u_h\|_{H^1}<\delta$
and $\psi$ is a solution of (\ref{id1.1}) in some interval
$[0,t_0)$ with $\psi|_{t=0}=\psi_0$, then $\psi(\cdot,t)$ can be
extended to a solution in $0\leq t<\infty$ and
$\sup_{0<t<\infty}\inf_{s\in\mathbb{R}}\|\psi(\cdot,
t)-\psi_h(\cdot,s)\|_{H^1}<\epsilon$. Otherwise, the orbit
$\psi_h$ is called orbital unstable.

The functions $V=V(x)$ and $m=m(x)$ may play a crucial role on the
orbital stability of $\psi_h$. When $m\equiv 1$ and $V$ is of class
$(V)_a$ and fulfills other conditions in \cite{[O1]}-\cite{[O2]},
the orbital stability and instability of $\psi_h$ for the
equation~(\ref{id1.1}) was established by Lin and Wei~\cite{[lw5]}
if $V$ has non-degenerate critical points. Under different
conditions, e.g., $h=1$ and $\lambda$ is large, results of the
orbital stability problem can be found in~\cite{[f1]}. One may also
remark that the orbital stability problem of NLS with inhomogeneous
nonlinearity has been investigated in~\cite{BF} but only for the
subcritical case, i.e.,~$1< p < 1+\frac{4}{N}$.

To state our main results, we need to introduce some notations. It
is well-known that the positive solution of
\begin{equation}\label{160}
\left\{ \begin{aligned}
&\Delta w-w+w^p=0\ \ \mbox{in} \ \ \mathbb{R}^N\,,\\
&w(0)=\max_{y \in \mathbb{R}^N} w(y)\,,\quad w(y)\to 0 \ \mbox{as} \
|y|\to +\infty\,.
\end{aligned} \right.
\end{equation}
is radial~\cite{[GNN]} and unique~\cite{[K]}. We denote the
solution and its linearized operator as $w=w(r)$ and
\begin{equation}\label{161} L_0:=\Delta-1+pw^{p-1}\,,\end{equation} respectively.
For the orbital stability of $\psi_h$, we set
\begin{equation}\label{id1.4}
L_h:=h^2\Delta-\left(V+\lambda\right)+m\,pu_h^{p-1}
\end{equation}
as the linearized operator of (\ref{id1.3}) with respect to $u_h$
and
\begin{align}\label{id1.5}
d(\lambda)= \int_{\mathbb{R}^N}\left[\frac{h^2}{2}|\nabla u_h|^2
+\frac{1}{2}\left(V+\lambda\right)u_h^2-\frac{1}{p+1}m\,u_h^{p+1}\,\right]dx\,,
\end{align}
as the energy of $u_h$. Observe that $u_h$ may depend on the
variable $\lambda$. Assume that $d(\lambda)$ is non-degenerate,
i.e., $ d\,''(\lambda) \not = 0$. Let $p(d\,'')=1$ if $ d\,'' >0$; $
p(d\,'')=0$ if $ d\,'' <0$, and $n(L_h)$ be the number of positive
eigenvalues of $L_h$. According to general theory of orbital
stability of bound states (cf.~\cite{[GS1]}, \cite{[GS2]}), $\psi_h$
is orbital stable if $n(L_h)=p(d\,'')$, and orbital unstable if $
n(L_h)- p(d\,'')$ is odd (see page~309 of~\cite{[GS2]}). It is
remarkable that if both $V$ and $m$ are constant and
$p=1+\frac{4}{N}$, then $d\,''(\lambda)=0$. Consequently, from now
on, we consider the critical exponent $p=1+\frac{4}{N}$ and assume
the point $x_0$ as a non-degenerate critical point of the function
$G$ defined by (cf.~\cite{[G1]}, \cite{[WXZ]})
\begin{equation}\label{191}
G(x):=\big[V(x)+\lambda\big]\,m^{-N/2}(x)\,,\quad\forall
x\in\mathbb{R}^N\,,\end{equation} provided $V\not\equiv 0$ and
$m>0$ in $\mathbb{R}^N$. When $V\equiv 0$ in $\mathbb{R}^N$, $x_0$
is set as a non-degenerate critical point of the function $m$.

For simplicity, we firstly switch off the potential $V$ and obtain
the following result.
\begin{thm}\label{thm1.1}
Let $N\leq3$ be a positive integer, $p=1+\frac{4}{N}$ and the
potential $V\equiv 0$. Assume the function $m=m(x)$ satisfies
\begin{equation}\label{id1.6}
m\in C^4(\mathbb{R}^N);0<m_0\leq m(x)\leq m_1<\infty;|m^{(i)}(x)|\leq
C{\rm{exp}}(\gamma|x|),\,i=1,2,3,4,
\end{equation}
where $m_0,m_1,\gamma$ and $C$ are positive constants, and $m^{(i)}(x)$
are the $i$-th derivatives of $m(x)$. Suppose also that $x_0$ be a non-degenerate critical point of $m(x)$ ($x_0$ is independent of $\lambda$). Let
$\psi_h(x,t):=e^{i\lambda t/h}u_h(x)$ be a bound state of (\ref{id1.1}), where $u_h$ is a single-spike solution of (\ref{id1.3})
concentrating at $x_0$.
Assume also
\begin{eqnarray}\label{con1.1}
m(x_0)\Delta^2 m(x_0)&<&C_{N,1}|\Delta
m(x_0)|^2+C_{N,2}\Big[N\|\nabla^2m(x_0)\|_2^2-|\Delta
m(x_0)|^2\Big]\nonumber\\
&&+C_{N,3}m(x_0)\nabla(\Delta
m)(x_0)\cdot\big[\nabla^2m(x_0)\big]^{-1}\nabla(\Delta m)(x_0)\,,
\end{eqnarray}
where
\begin{align}
&C_{N,1}=\frac{2(N+2)^2\int\limits_0^\infty
r^{N+1}w^pL_0^{-1}\big(r^2w^p\big)dr}{N^2\int\limits_0^\infty
r^{N+3}w^{p+1}dr}\,,\label{id1.9-1} \\
&C_{N,2}=\frac{4(N+2)\int\limits_0^\infty
r^{N+1}w^p\Phi_0dr}{N^2\int\limits_0^\infty
r^{N+3}w^{p+1}dr}\,,\label{id1.9-2}\\
&C_{N,3}=\frac{(N+2)\big(\int\limits_0^\infty
r^{N+1}w^{p+1}dr\big)^2} {N\int\limits_0^\infty
r^{N-1}w^{p+1}dr\int\limits_0^\infty
r^{N+3}w^{p+1}dr},\label{id1.9-3}
\end{align}
are constants depending only on $N$. Here $\Phi_0=\Phi_0(r)$
satisfies
\begin{equation}\label{id1.7}
\left\{ \begin{aligned}
&\Phi_0''+\frac{N-1}{r}\Phi_0'-\Phi_0+pw^{p-1}\Phi_0-\frac{2N}{r^2}\Phi_0-r^2w^p=0,\,r=|x|\in(0,\infty),\\
&\Phi_0(0)=\Phi_0'(0)=0.
\end{aligned} \right.
\end{equation}
where $L_0$ is defined in (\ref{161}). Then for any $\lambda>0$,
$\psi_h$ is orbitally stable if $h$ is sufficiently small and
$x_0$ is a non-degenerate local maximum point of the function $m$.
Furthermore, for any $\lambda>0$, $\psi_h$ is orbitally unstable
if $h$ is sufficiently small and the number of positive
eigenvalues of the Hessian matrix $\nabla^2m(x_0)$ is odd.
\end{thm}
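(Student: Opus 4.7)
The plan is to invoke the Grillakis--Shatah--Strauss criterion recalled in the introduction: once we know $n(L_h)$ and the sign of $d\,''(\lambda)$, stability follows from $n(L_h)=p(d\,'')$ and instability from $n(L_h)-p(d\,'')$ being odd. So the proof reduces to (i) counting the positive eigenvalues of $L_h$ in (\ref{id1.4}) as $h\to 0$, and (ii) determining the sign of $d\,''(\lambda)$. A quick scaling argument shows that for constant $m$ one has $d(\lambda)=C h^N\lambda$ and hence $d\,''\equiv 0$ at the critical exponent; the sign of $d\,''(\lambda)$ must therefore be read off from the \emph{first nontrivial} correction in $h$, which is precisely what hypothesis (\ref{con1.1}) encodes.

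\textbf{Step 1: spectral count for $L_h$.} Rescale by $y=(x-x_h)/h$ around the concentration point $x_h\to x_0$ of $u_h$; in these variables $L_h/\lambda$ converges to the fixed operator $L_0=\Delta-1+pw^{p-1}$ of (\ref{161}). By the Kwong--Weinstein nondegeneracy results, $L_0$ has one simple positive eigenvalue, an $N$-dimensional kernel $\mathrm{span}\{\partial_{y_1}w,\ldots,\partial_{y_N}w\}$, and a strictly negative remaining spectrum. A Lyapunov--Schmidt reduction on the near-kernel then shows that the small eigenvalues of $L_h$ are, to leading order in $h$, those of an $N\times N$ matrix whose signature coincides with that of $\nabla^2 m(x_0)$, so that $n(L_h)=1+k$ where $k$ is the number of positive eigenvalues of $\nabla^2 m(x_0)$.

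\textbf{Step 2: sign of $d\,''(\lambda)$.} Write the matched asymptotic ansatz
\[
u_h(x)=\Bigl(\tfrac{\lambda}{m(x_h)}\Bigr)^{\!1/(p-1)}\Bigl[w(y)+h^{2}w_2(y)+h^{3}w_3(y)+h^{4}w_4(y)+\cdots\Bigr],\qquad y=\sqrt{\lambda}\,\tfrac{x-x_h}{h},
\]
with $x_h=x_0+h^2x_2+\cdots$ (the $O(h)$ correction vanishes since $\nabla m(x_0)=0$), and solve the resulting linear hierarchy $L_0w_j=F_j$ by the Fredholm alternative for $L_0$; the solvability conditions fix the $x_j$. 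The inhomogeneities $F_j$ arise from Taylor expanding $m$ at $x_0$ and decompose into spherical-harmonic modes: the $\ell=0$ part is absorbed by radial correctors like $L_0^{-1}(r^2w^p)$, while the $\ell=2$ part requires the function $\Phi_0$ of (\ref{id1.7})---which is exactly the radial factor of the $\ell=2$ response to $r^2w^p$, the $-2N/r^2$ term being the centrifugal contribution of a degree-$2$ spherical harmonic in $\mathbb{R}^N$. Substituting back into (\ref{id1.5}) and collecting the first nonvanishing $\lambda$-dependent corrections, one finds that $d\,''(\lambda)$ equals a positive $h,\lambda$-dependent prefactor times the difference between the right-hand and left-hand sides of (\ref{con1.1}); the constants $C_{N,1},C_{N,2},C_{N,3}$ of (\ref{id1.9-1})--(\ref{id1.9-3}) emerge as the weighted radial integrals of $w$, $w^p$, $L_0^{-1}(r^2w^p)$ and $\Phi_0$ that appear in this computation. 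Hence (\ref{con1.1}) is equivalent to $d\,''(\lambda)>0$ for all sufficiently small $h$, i.e.\ $p(d\,'')=1$.

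\textbf{Assembly and main obstacle.} Combining the two steps under hypothesis (\ref{con1.1}): if $x_0$ is a non-degenerate local maximum of $m$ then $k=0$, so $n(L_h)=1=p(d\,'')$ and Grillakis--Shatah--Strauss yields orbital stability; if $k$ is odd then $n(L_h)-p(d\,'')=(1+k)-1=k$ is odd and $\psi_h$ is orbitally unstable. The main technical obstacle is Step 2: at the critical exponent the leading orders in the $h$-expansion of $d(\lambda)$ cancel, so one must push the expansion of $u_h$ out to the order where $\Delta^2m(x_0)$ and $\nabla(\Delta m)(x_0)$ first enter, and carefully control the $\ell=0$ and $\ell=2$ spherical-harmonic decomposition (hence the appearance of $\Phi_0$). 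A secondary difficulty is establishing uniform-in-$h$ remainder estimates strong enough to legitimize differentiating the asymptotic expansion of $d(\lambda)$ twice in $\lambda$; the growth hypotheses on $m^{(i)}$ in (\ref{id1.6}) and the restriction $N\le 3$ enter precisely to make these remainder bounds go through.
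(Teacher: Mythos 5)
Your overall architecture (Grillakis--Shatah--Strauss with the count $n(L_h)=1+k$ versus $p(d\,'')$), your Step 1 (Lyapunov--Schmidt on the near-kernel of the rescaled operator, giving small eigenvalues whose signature is that of $\nabla^2m(x_0)$), and your final assembly all coincide with the paper, as does the claim that (\ref{con1.1}) is exactly the condition $d\,''(\lambda)>0$. The genuine gap is in Step 2. You propose to expand $u_h$ in powers of $h$ at fixed $\lambda$, substitute into $d(\lambda)$ of (\ref{id1.5}), and then differentiate the resulting expansion twice in $\lambda$; you yourself flag that this requires the $o(h^{N+4})$ remainder to be small in a $C^2_\lambda$ sense, but you offer no mechanism for it, and your attribution of this control to (\ref{id1.6}) and $N\le3$ is wrong: those hypotheses are used for the exponentially weighted Taylor/Pohozaev estimates (Lemma~\ref{lem2.1}) and to ensure $p=1+\tfrac4N>2$ so the nonlinearity can be expanded to second order; they do not legitimize term-by-term $\lambda$-differentiation of an $h$-asymptotic series. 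The paper circumvents exactly this obstacle by a different mechanism that is missing from your plan: it first proves uniqueness of $u_h$ (Lemma~\ref{lem2.2}) and non-degeneracy (Theorem~\ref{thm2.7}), which is what makes the branch $\lambda\mapsto u_h$ well defined and smooth in the first place, and then expands $R_h=\partial_\lambda u_h$ \emph{directly} from the equation $L_hR_h=v_h$, using the uniform invertibility of $L_{x_h}$ on $K_{x_h}^\perp$ to get $R_h=R_0+\sum_jc_h^j\partial_jw_{x_h}+h^2R_1+R_h^\perp$ with quantitative bounds (Theorem~\ref{thm3.2}).

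This is not merely a bookkeeping difference: the middle term of (\ref{con1.1}), the $C_{N,3}$ contribution, comes precisely from the translation-mode coefficients $c_h^j=\mathrm{O}(h)$ of $\partial_\lambda u_h$, whose limit is pinned down by $\nabla^2m(x_0)(h^{-1}\mathbf{c}_h)\to -c\,\lambda^{-2}\nabla(\Delta m)(x_0)$, i.e.\ from the $\lambda$-dependence of the concentration point $x_h=x_0+h^2\mathbf{x}_1+\mathrm{O}(h^3)$ with $\mathbf{x}_1\propto\lambda^{-1}$ (Lemma~\ref{lem3.1}). In your scheme this term can only appear after differentiating the $\lambda$-dependent shift $x_2(\lambda)$ inside the expansion twice, which is exactly the unjustified step; without it you would recover the $\Delta^2m$, $|\Delta m|^2$ and $\|\nabla^2m\|_2^2$ terms (your $\ell=0$ and $\ell=2$ decomposition, matching the $\Phi_0$ and $L_0^{-1}(r^2w^p)$ identities of Appendix B) but not the $[\nabla^2m(x_0)]^{-1}$ term, and the sign criterion would be wrong. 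So either supply the uniform-in-$\lambda$ differentiability of your expansion (which in practice amounts to redoing the paper's analysis of $L_hR_h=v_h$), or replace Step 2 by the paper's direct expansion of $\partial_\lambda u_h$; as written, the determination of $p(d\,'')$ is not established.
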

\noindent {\bf Remark 1:} When $N=1$, $x_0=0$ and the function $m$
satisfies $m^{\prime\prime\prime}(x_0)=0\,,$ (see~(C.2)
of~\cite{FSW}), the condition~(\ref{con1.1}) of
Theorem~\ref{thm1.1} is exactly same as the condition~(4.14)
of~\cite{FSW}. For $N\geq 2$, G.Fibich and X.-P.Wang
(cf.~\cite{fw}) considered the function $m$ with radial symmetry,
i.e., $m=m(r), r=|x|$ and $m^{\prime\prime\prime}(0)=0\,,$ and
studied the orbital stability problem only for radial
perturbations. Here we may include the case that the function $m$
is not radially symmetric and the third order derivatives of the
function $m$ at $x_0$ can be nonzero. Moreover, we study the
orbital stability problem for general perturbations including the
non-radial perturbations. Consequently, Theorem~\ref{thm1.1} can
be regarded as the most general theorem on the orbital stability
problem of semiclassical NLS equations with critical exponent and
nonlinear OLs.

When the potential $V$ is turned on, we may generalize the
argument of Theorem~\ref{thm1.1} to obtain three theorems as
follows:
\begin{thm}\label{thm1.2}
Let $N\leq3$ be a positive integer, $p=1+\frac{4}{N}$. Assume both
the potential $V=V(x)$ and the function $m=m(x)$ satisfy the
following conditions: there exist positive constants $V_0,V_1,m_0,m_1,\gamma$ and
$C$ such that
\begin{equation}
V\in C^2(\mathbb{R}^N);0<V_0\leq V(x)\leq V_1<\infty;\quad |V^{(i)}(x)|\leq
C{\rm{exp}}(\gamma|x|),\,i=1,2,
\end{equation}
and
\begin{equation}
m\in C^2(\mathbb{R}^N);0<m_0\leq m(x)\leq m_1<\infty;\quad |m^{(i)}(x)|\leq
C{\rm{exp}}(\gamma|x|),\,i=1,2,
\end{equation}
where $V^{(i)}(x),m^{(i)}(x)$ are the $i$-th derivatives of
$V(x),m(x)$, respectively. Suppose also that $x_0$ be a
non-degenerate critical point of the function~$G$ defined in
(\ref{191}) for fixed $\lambda>0$ ($x_0$ may depend on $\lambda$). Let $\psi_h(x,t):=e^{i\lambda
t/h}u_h(x)$ be a bound state of (\ref{id1.1}), where $u_h$ is a
single-spike solution of (\ref{id1.3}) concentrating at $x_0$. Then $\psi_h$ is orbitally
unstable if $h$ is sufficiently small and $x_0$ is a
non-degenerate local minimum point of $G$ such that $\nabla
V(x_0)\neq0$.
\end{thm}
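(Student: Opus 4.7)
The plan is to apply the Grillakis--Shatah--Strauss (GSS) criterion recalled in the introduction: $\psi_h$ is orbitally unstable as soon as $n(L_h)-p(d\,'')$ is odd. I will establish, for $h$ sufficiently small, that $n(L_h)=1$ and $d\,''(\lambda)<0$, so $n(L_h)-p(d\,'')=1-0=1$ is odd.

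For $n(L_h)$, I would use a Lyapunov--Schmidt reduction. Because $u_h$ concentrates at $x_0$, it sits in a smooth $N$-parameter family of concentration profiles $u_{y_0,h}$, and the reduced functional $F(y_0):=E_\lambda(u_{y_0,h})$, with $E_\lambda(u)=\int\!\bigl[\tfrac{h^2}{2}|\nabla u|^2+\tfrac{1}{2}(V+\lambda)u^2-\tfrac{1}{p+1}m\,u^{p+1}\bigr]\,dx$, has $x_0$ as a critical point to leading order in $h$. The standard rescaling $u_{y_0,h}(x)\sim\bigl(\tfrac{V(y_0)+\lambda}{m(y_0)}\bigr)^{1/(p-1)}w\bigl(\sqrt{V(y_0)+\lambda}\,(x-y_0)/h\bigr)$ together with the Pohozaev identity $\int w^{p+1}=\tfrac{N+2}{2}\int w^2$ (which is special to $p=1+4/N$) gives
\begin{equation*}
F(y_0)=\frac{h^N}{N+2}\,G(y_0)\int_{\mathbb{R}^N}\!w^{p+1}\,dy+O(h^{N+1}),
\end{equation*}
so that to leading order $\nabla^2 F(x_0)=\tfrac{h^N}{N+2}\bigl(\int w^{p+1}\bigr)\nabla^2 G(x_0)+O(h^{N+1})$. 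Using $-L_h=E_\lambda''(u_h)$ and $\nabla F(x_0)=0$, the quadratic form of $-L_h$ restricted to the near-kernel subspace spanned by $\{\partial_{y_{0,i}}u_{y_0,h}|_{x_0}\}$ agrees with $\nabla^2 F(x_0)$ up to lower-order terms, and is positive definite because $x_0$ is a non-degenerate local minimum of $G$. Hence the $N$ eigenvalues of $L_h$ bifurcating from the $N$-dimensional kernel of $L_0$ are strictly negative for small $h$. Combined with the single positive eigenvalue of $L_h$ near $\mu_0(V(x_0)+\lambda)$ (inherited from the unique positive eigenvalue of $L_0$) and the strictly negative remainder of the spectrum, this yields $n(L_h)=1$.

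For $d\,''(\lambda)$, I would use the identity $d\,'(\lambda)=\tfrac{1}{2}\int u_h^2\,dx$, obtained by differentiating~(\ref{id1.5}) and testing~(\ref{id1.3}) against $\partial_\lambda u_h$. The hallmark of the critical exponent $p=1+4/N$ is the cancellation
\begin{equation*}
\int_{\mathbb{R}^N}\!u_h^2\,dx=h^N\,m(x_0(\lambda))^{-N/2}\int_{\mathbb{R}^N}\!w^2\,dy+O(h^{N+1}),
\end{equation*}
in which the explicit $\lambda$-dependence disappears and the leading-order $L^2$ mass depends on $\lambda$ only through $x_0(\lambda)$. Implicitly differentiating $\nabla G_\lambda(x_0(\lambda))=0$, with $G_\lambda(x)=(V(x)+\lambda)m(x)^{-N/2}$ and $\partial_\lambda\nabla G_\lambda(x)=-\tfrac{N}{2}m(x)^{-N/2-1}\nabla m(x)$, gives $\tfrac{dx_0}{d\lambda}=\tfrac{N}{2}m(x_0)^{-N/2-1}[\nabla^2 G(x_0)]^{-1}\nabla m(x_0)$; the chain rule then yields
\begin{equation*}
d\,''(\lambda)=-\frac{N^2 h^N}{8}\,m(x_0)^{-N-2}\,\nabla m(x_0)^{T}[\nabla^2 G(x_0)]^{-1}\nabla m(x_0)\int_{\mathbb{R}^N}\!w^2\,dy+O(h^{N+1}).
\end{equation*}
The critical-point condition $\nabla V(x_0)=\tfrac{N(V(x_0)+\lambda)}{2m(x_0)}\nabla m(x_0)$ shows that the hypothesis $\nabla V(x_0)\ne 0$ forces $\nabla m(x_0)\ne 0$, and $\nabla^2 G(x_0)$ is positive definite since $x_0$ is a local minimum of $G$; hence the quadratic form above is strictly positive and $d\,''(\lambda)<0$ for all sufficiently small $h$, so $p(d\,'')=0$.

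Combining the two steps, $n(L_h)-p(d\,'')=1$ is odd and GSS yields orbital instability of $\psi_h$. The main technical obstacle is the quantitatively controlled Lyapunov--Schmidt reduction: one must simultaneously locate the near-zero spectrum of $L_h$ to precision $o(1)$ in terms of $\nabla^2 G(x_0)$, and produce the $O(h^{N+1})$ remainder in the expansion of $\int u_h^2$ in a form that is $C^1$ in $\lambda$ so that differentiation does not destroy the sign of the leading $O(h^N)$ term. The key structural contrast with Theorem~\ref{thm1.1} is that when $V\equiv 0$ the point $x_0$ is $\lambda$-independent and $d\,''$ is pushed to smaller order in $h$ (whence the delicate condition (\ref{con1.1}) is needed), whereas here $x_0(\lambda)$ moves non-trivially with $\lambda$ precisely because $\nabla V(x_0)\ne 0$, and this movement is exactly what produces a leading-order negative $d\,''$.
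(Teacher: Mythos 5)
Your overall architecture is the same as the paper's: show $n(L_h)=1$, show $d\,''(\lambda)<0$ so that $p(d\,'')=0$, and invoke the Grillakis--Shatah--Strauss odd-parity criterion. Your leading-order formula for $d\,''$ is in fact identical to the paper's (\ref{id4.1}): your coefficient $-\frac{N^2}{8}m(x_0)^{-N-2}\nabla m\cdot[\nabla^2G(x_0)]^{-1}\nabla m\int w^2$ equals $-\frac{N^2}{4(N+2)}m(x_0)^{-N-2}\nabla m\cdot[\nabla^2G(x_0)]^{-1}\nabla m\int w^{p+1}$ via $\int w^{p+1}=\frac{N+2}{2}\int w^2$, and your use of (\ref{eq4.2}) to convert $\nabla V(x_0)\neq0$ into $\nabla m(x_0)\neq0$ is exactly the paper's. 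But the route is genuinely different. For the spectral count the paper proves Theorem~\ref{thm4.1}: the $N$ small eigenvalues satisfy $\mu_h^j/h^2\to -G(x_0)^{-1}\nu_j$, obtained by projecting the eigenvalue problem onto the corrected modes $\partial_jw_{x_h}+h\psi_j$ and using the Appendix~C identity (\ref{eq4.4}); you instead read the signs off the Hessian of a reduced energy $F(y_0)\approx\frac{h^N}{N+2}G(y_0)\int w^{p+1}$. For $d\,''$ the paper never differentiates an asymptotic expansion in $\lambda$: it expands $R_h=\partial_\lambda u_h(hy+x_h)$ through the exact equation $L_hR_h=v_h$, with the crucial feature (see (\ref{id4.4})--(\ref{id4.5})) that the coefficients $c_h^i$ along the translation modes are of size $h^{-1}$, and then computes $h^{-N}d\,''=\int v_hR_h$ term by term. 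Your chain-rule computation through $x_0(\lambda)$ is the transparent "physical" version of the same mechanism (the spike moves with $\lambda$ because $\nabla V(x_0)\neq0$), and it is a genuinely cleaner way to see why $d\,''$ is $O(h^N)$ here but only $O(h^{N+4})$ in Theorem~\ref{thm1.1}.

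The gap is the one you yourself flag, and it is not a removable formality: passing from $d\,'(\lambda)=\frac12\int u_h^2=\frac{h^N}{2}m(x_0(\lambda))^{-N/2}\int w^2+O(h^{N+1})$ to your formula for $d\,''$ requires that the $O(h^{N+1})$ remainder be differentiable in $\lambda$ with derivative $o(h^N)$, uniformly as $h\to0$. Nothing in your argument controls $\partial_\lambda$ of the error; a remainder of size $h^{N+1}$ can a priori have a $\lambda$-derivative of size $h^N$ or larger (indeed the dangerous contributions come precisely from the $\lambda$-dependence of the concentration point and of the corrector $\phi_1$, which are $O(1)$ effects). This is exactly what the paper's decomposition of $R_h$ is for: the smallness of $R_h^\perp$ and the identification of the singular part $\sum_i c_h^i(\partial_iw_{x_h}+h\psi_i)$ with $h\mathbf{c}_h\to -\frac{N}{2}m(x_0)^{-N/2-1}[\nabla^2G(x_0)]^{-1}\nabla m(x_0)$ are proved from the equation $L_hR_h=v_h$, not assumed, and they are what legitimizes the leading-order formula for $d\,''$. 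A parallel (milder) issue affects your spectral step: the claim $\nabla^2F(x_0)=\frac{h^N}{N+2}\bigl(\int w^{p+1}\bigr)\nabla^2G(x_0)+O(h^{N+1})$ needs $C^2$-in-$y_0$ control of the reduction error, and the identification of the near-kernel quadratic form with the $N$ small eigenvalues requires the first-order corrector (the paper's $\psi_j$, necessary precisely because $\nabla V(x_0)\neq0$ makes $L_h\partial_jw_{x_h}=O(h)$ rather than $O(h^2)$) together with a counting argument locating exactly $N$ eigenvalues in the relevant window. So your proposal is a correct and well-motivated blueprint whose leading-order constants check out, but to close it you would have to supply uniform-in-$\lambda$ (resp.\ $C^2$-in-$y_0$) estimates that are equivalent in substance to the paper's Theorem~\ref{thm4.1} and to its expansion of $R_h$.
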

\begin{thm}\label{thm1.3}
Under the same hypotheses of Theorem~\ref{thm1.2}, assume also that $\nabla
V(x_0)=0$ and $\Delta V(x_0)\neq0$ (thus $x_0$ may be independent of $\lambda$). Let $n$ be the number of
negative eigenvalues of the matrix $ \nabla^2 G(x_0)$. Then $\psi_h$ is orbitally stable if $h$ is
sufficiently small and $x_0$ is a non-degenerate local minimum
point of $G$ with $\Delta V(x_0)>0$. Furthermore, $\psi_h$ is
orbitally unstable if $h$ is sufficiently small and
$n-\frac{1}{2}\left(1+\frac{\Delta V(x_0)}{|\Delta V(x_0)|}\right)
$ is even.
\end{thm}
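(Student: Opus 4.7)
The plan is to apply the Grillakis--Shatah--Strauss criterion already stated in the introduction: $\psi_h$ is orbitally stable when $n(L_h)=p(d'')$ and orbitally unstable when $n(L_h)-p(d'')$ is odd. Since $\nabla V(x_0)=0$ together with $\nabla G(x_0)=0$ forces $\nabla m(x_0)=0$, both $V$ and $m$ are critical at $x_0$, so the spike location may be taken to be $x_0$ to leading order. I therefore need to compute, for $h$ small, (a) the number of positive eigenvalues $n(L_h)$ and (b) the sign of $d''(\lambda)$.

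For (a), I rescale $y=\sqrt{\mu}(x-x_0)/h$ with $\mu=V(x_0)+\lambda$, $m_0=m(x_0)$, and write $u_h(x)=(\mu/m_0)^{N/4}v(y)$. Taylor-expanding $V$ and $m$ at $x_0$ gives $v=w+h^2\phi+O(h^3)$ where
\[
L_0\phi=\tfrac{1}{2\mu^2}\langle y,\nabla^2V(x_0)y\rangle w-\tfrac{1}{2\mu m_0}\langle y,\nabla^2m(x_0)y\rangle w^p,
\]
which is solvable against $\ker L_0=\mathrm{span}\{\partial_iw\}$ by parity. Differentiating (\ref{id1.3}) gives $L_h\partial_iu_h=\partial_iV\cdot u_h-\partial_im\cdot u_h^p$; testing against $\partial_ju_h$, using $\partial_iV(x)\approx V_{ij}(x_0)(x-x_0)_j$ and $\partial_im(x)\approx m_{ij}(x_0)(x-x_0)_j$, and integrating by parts yields
\[
\langle L_h\partial_iu_h,\partial_ju_h\rangle=\bigl(-\tfrac{1}{2}V_{ij}(x_0)\int u_h^2+\tfrac{m_{ij}(x_0)}{p+1}\int u_h^{p+1}\bigr)(1+o(1)).
\]
The critical-exponent Pohozaev identity $\int w^{p+1}=\tfrac{N+2}{2}\int w^2$ rescales to $\int u_h^{p+1}=\tfrac{N+2}{2}\tfrac{\mu}{m_0}\int u_h^2+O(h^{N+2})$, so the bracket collapses to $-\tfrac{1}{2}m_0^{N/2}(\nabla^2G(x_0))_{ij}\int u_h^2$. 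The negative prefactor means the $N$ small eigenvalues of $L_h$ carry signs opposite to those of $\nabla^2G(x_0)$; together with the unique large positive eigenvalue inherited from $L_0$, this gives $n(L_h)=1+n$.

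For (b), testing (\ref{id1.3}) against $u_h$ and substituting into (\ref{id1.5}) yields the identity $d(\lambda)=\tfrac{1}{N+2}\int m\,u_h^{p+1}\,dx$. Inserting the ansatz gives
\[
d(\lambda)=\tfrac{h^N}{N+2}G(x_0)\int w^{p+1}\,dy+\tfrac{h^{N+2}}{N+2}\mathcal R(\lambda)+O(h^{N+4}),
\]
whose leading term is linear in $\lambda$ and therefore invisible to $d''$. Three pieces contribute to $\mathcal R$: the direct expansion of $m$ in the integrand, the $\nabla^2m$-piece of $\phi$, and the $\nabla^2V$-piece of $\phi$. Only the last depends on $\lambda$, through a factor $1/\mu$ inherited from the rescaling. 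Radial symmetry reduces $\sum_{ij}V_{ij}(x_0)\int y_iy_j\,w^pL_0^{-1}(y_ky_lw)\,dy$ to $\tfrac{\Delta V(x_0)}{N}\int w^pL_0^{-1}(r^2w)\,dy$, so two $\lambda$-derivatives give
\[
d''(\lambda)=C\,h^{N+2}\,\Delta V(x_0)\int w^pL_0^{-1}(r^2w)\,dy+o(h^{N+2}),\qquad C>0.
\]
Now the scale invariance $L_0w=(p-1)w^p$, specific to the critical exponent, gives $L_0^{-1}w^p=w/(p-1)$, and self-adjointness yields $\int w^pL_0^{-1}(r^2w)=\tfrac{1}{p-1}\int r^2w^2>0$. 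Hence $\mathrm{sgn}\,d''(\lambda)=\mathrm{sgn}\,\Delta V(x_0)$ for $h$ small.

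Combining: if $\Delta V(x_0)>0$ then $p(d'')=1$ and $n(L_h)-p(d'')=n$, giving stability at a non-degenerate minimum of $G$ (where $n=0$) and instability when $n$ is odd; if $\Delta V(x_0)<0$ then $p(d'')=0$ and $n(L_h)-p(d'')=1+n$, odd iff $n$ is even. Both cases unify into the stated parity condition on $n-\tfrac{1}{2}(1+\Delta V(x_0)/|\Delta V(x_0)|)$. The main obstacle is the $h^{N+2}$ expansion in Step~(b): because the leading energy is $\lambda$-linear at the critical exponent, one must push the Lyapunov--Schmidt correction $\phi$ far enough to isolate the unique piece that survives $d^2/d\lambda^2$, and then match the resulting integral against $L_0$ via $L_0w=(p-1)w^p$ to extract a definite sign.
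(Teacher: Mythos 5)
Your overall strategy is the paper's: compute $n(L_h)$ and the sign of $d''(\lambda)$ and feed them into the Grillakis--Shatah--Strauss criterion, and your two key outputs agree quantitatively with the paper. Your quadratic form $-\tfrac12 m(x_0)^{N/2}\nabla^2G(x_0)_{ij}\int u_h^2$ is the analogue of the identity (\ref{eq4.4}) behind Theorem~\ref{thm4.1}, giving $n(L_h)=n+1$, and your final formula for $d''$ reproduces (\ref{id4.13}) exactly, including the constant $\tfrac{1}{2N}[V(x_0)+\lambda]^{-3}m(x_0)^{-N/2}\int|y|^2w^2\,dy$. Where you genuinely diverge is in part (b): instead of expanding $R_h=\partial_\lambda u_h$ as in Theorem~\ref{thm3.2} and its Case II analogue (\ref{id4.8})--(\ref{id4.17}) and computing $d''=h^N\int v_hR_h$, you write $d(\lambda)=\tfrac{1}{N+2}\int m\,u_h^{p+1}$, isolate the unique $\lambda$-dependent piece of the $h^{N+2}$ coefficient (the $1/\mu$ factor attached to the $\nabla^2V$ part of the corrector $\phi$), and evaluate the resulting integral via self-adjointness and $L_0w=(p-1)w^p$ (which, incidentally, holds for every $p$, not only the critical exponent), so you bypass the $\Phi_0,\Phi_1$ machinery of Appendix B. This is an attractive simplification and, at the formal level, it is correct.

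The genuine gap is in the same step: you expand $d(\lambda)$ in powers of $h$ and then differentiate the expansion twice in $\lambda$. Nothing in your argument shows that the remainder remains $o(h^{N+2})$ after applying $\partial_\lambda^2$, nor even that $\lambda\mapsto u_h$ is differentiable so that $d''(\lambda)$ and $\partial_\lambda u_h$ make sense; the paper secures the latter through uniqueness and nondegeneracy of $u_h$ (Lemma~\ref{lem2.2}, Theorem~\ref{thm2.7}, adapted in Section 4), and secures the former precisely by expanding $\partial_\lambda u_h$ itself, with quantitative control of $\textbf{c}_h$ and $R_h^\perp$, rather than differentiating an $h$-asymptotic series term by term. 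To close your argument you must either prove $\lambda$-uniform, $\lambda$-differentiable remainder estimates for your ansatz (which amounts to redoing the paper's $R_h$ decomposition), or switch to the computation of $\int v_hR_h$. A secondary, more routine point: in step (a), the value of the quadratic form of $L_h$ on $\mathrm{span}\{\partial_iu_h\}$ does not by itself yield the signs of the $N$ small eigenvalues, nor the fact that these together with one large simple positive eigenvalue exhaust the nonnegative spectrum; that requires the decomposition/invertibility argument of Theorem~\ref{thm2.7} (Lemma~\ref{lem2.5}, Remark 4) adapted as in Theorem~\ref{thm4.1}, which you should at least invoke explicitly.
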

\begin{thm}\label{thm1.4}
Under the same hypotheses of Theorem~\ref{thm1.2}, assume also that $\nabla
V(x_0)=0$, $\Delta V(x_0)=0$ and (\ref{id1.6}) holds for both $V$
and $m$. Let $n$ be the number of negative eigenvalues of the
matrix $\nabla^2 G(x_0)$. Suppose also that $H(x_0)>0$, where $H(x_0)$
defined in ~(\ref{id4.23}) involves the $i$-th derivatives (for
$0\leq i\leq 4$) of $V$ and $m$ at $x_0$. Then $\psi_h$ is orbitally stable if $h$ is sufficiently small and $x_0$ is a non-degenerate local minimum point of $G$.
Furthermore, $\psi_h$ is orbitally unstable
if $n$ is odd.
\end{thm}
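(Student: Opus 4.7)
The plan is to invoke the Grillakis--Shatah--Strauss criterion recalled in the introduction: $\psi_h$ is orbitally stable if $n(L_h)=p(d'')$ and orbitally unstable if $n(L_h)-p(d'')$ is odd. Everything therefore reduces to (i) counting the positive eigenvalues of $L_h$ in (\ref{id1.4}) and (ii) determining the sign of $d''(\lambda)$ defined by (\ref{id1.5}). I aim to establish, for $h$ sufficiently small, that
\[
n(L_h)=1+n \qquad\text{and}\qquad d''(\lambda)>0 \text{ under the hypothesis } H(x_0)>0,
\]
from which both conclusions follow at once.

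For the eigenvalue count I would rescale $y=(x-x_0)/h$, turning $L_h$ into a perturbation of a positive multiple of the operator $L_0$ from (\ref{161}). By the Kwong uniqueness result for (\ref{160}), $L_0$ has exactly one positive eigenvalue, an $N$-dimensional kernel $\ker L_0=\mathrm{span}\{\partial_j w\}$, and negative spectrum below. The isolated positive eigenvalue persists for small $h$. A Lyapunov--Schmidt/projection argument on the near-kernel then shows that the $N$ small eigenvalues of $L_h$ split at order $h^2$, with splitting matrix a positive multiple of $-\nabla^2 G(x_0)$; the reason is that the solvability condition for the next-order correction to $u_h$ against the modes $\partial_j w$ is exactly $\nabla G(x_0)=0$, and its Jacobian in $x_0$ is $\nabla^2 G(x_0)$. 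Consequently positive eigenvalues of $\nabla^2 G(x_0)$ produce negative eigenvalues of $L_h$ and vice versa, giving $n(L_h)=1+n$.

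The harder step is the sign of $d''(\lambda)$. Because $p=1+4/N$ is $L^2$-critical, the constant-coefficient piece of $d''$ vanishes identically, and all information comes from corrections driven by the spatial variation of $V$ and $m$. The $O(h^2)$ correction exploited in Theorem~\ref{thm1.3} is proportional to $\Delta V(x_0)$, which vanishes here; moreover $\nabla V(x_0)=\Delta V(x_0)=0$ combined with $\nabla G(x_0)=0$ also forces $\nabla m(x_0)=0$. One must therefore push the expansion of $u_h$ two orders further. I would write
\[
u_h(x_0+hy)=\alpha_0\,w(\beta_0 y)+h^2\phi_2(y)+h^3\phi_3(y)+h^4\phi_4(y)+\cdots
\]
and solve the hierarchy $L_0\phi_k=F_k$, where each $F_k$ is an explicit polynomial combination of $w$ and its derivatives together with $\partial^\beta V(x_0),\partial^\beta m(x_0)$ for $|\beta|\le k$; this is precisely why the $C^4$ regularity (\ref{id1.6}) is imposed on both $V$ and $m$, and the solvability conditions against $\ker L_0$ at each order are granted by the criticality assumptions $\nabla G(x_0)=\nabla V(x_0)=\Delta V(x_0)=0$. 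Substituting this expansion into (\ref{id1.5}), differentiating twice in $\lambda$, and applying Pohozaev-type identities to eliminate lower-order contributions, one arrives at $d''(\lambda)=c_N h^\kappa H(x_0)+o(h^\kappa)$ for an explicit positive constant $c_N$ and an exponent $\kappa>0$. The expression $H(x_0)$ recorded in (\ref{id4.23}) is the natural fourth-derivative analog of the right-hand side of (\ref{con1.1}) from Theorem~\ref{thm1.1}, combining the Hessian, Laplacian and bi-Laplacian of $V$ and $m$ at $x_0$ with radial moments of $w$ weighted against $L_0^{-1}$-images such as $\Phi_0$.

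Combining the two pieces via GSS finishes the argument: at a non-degenerate local minimum of $G$ one has $n=0$, so $n(L_h)=1=p(d'')$ and $\psi_h$ is orbitally stable; if $n$ is odd then $n(L_h)-p(d'')=(1+n)-1=n$ is odd and $\psi_h$ is orbitally unstable. The main obstacle is purely the high-order matched asymptotic expansion itself: one has to carry the Lyapunov--Schmidt reduction two orders beyond the Theorem~\ref{thm1.3} setting, track delicate cancellations among the third- and fourth-order derivatives of $V$ and of $m$ at $x_0$, and recognize the resulting leading coefficient as the compact combination $H(x_0)$ in (\ref{id4.23}).
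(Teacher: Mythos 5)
Your overall skeleton matches the paper: the GSS criterion, the count $n(L_h)=n+1$ obtained from the $O(h^2)$ splitting of the near-kernel eigenvalues by a negative multiple of $\nabla^2G(x_0)$ (this is exactly the paper's Theorem~\ref{thm4.1}, via the key identity (\ref{eq4.4})), and the goal $d''(\lambda)=h^{N+4}\big[H(x_0)+\mathrm{o}(1)\big]$. The gap lies in how you propose to reach the sign of $d''(\lambda)$. First, your claim that ``the solvability conditions against $\ker L_0$ at each order are granted by $\nabla G(x_0)=\nabla V(x_0)=\Delta V(x_0)=0$'' is false at order $h^3$: the forcing there consists of third-derivative terms such as $\partial_{ijk}V(x_0)\,y_iy_jy_k\,w$ and $\partial_{ijk}m(x_0)\,y_iy_jy_k\,w^p$, which are odd and hence \emph{not} orthogonal to $\partial_l w$, and the hypotheses do not make third derivatives vanish. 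The hierarchy centered at the fixed point $x_0$ with no translation modes therefore does not close; the paper resolves this by expanding around the true maximum point $x_h=x_0+h^2\textbf{x}_1+\mathrm{O}(h^3)$, with $\textbf{x}_1$ determined by precisely that solvability condition ((\ref{id4.15})), and by allowing kernel components $c_h^j\partial_j w_{x_h}$ in $\partial_\lambda u_h$ with $h^{-1}\textbf{c}_h\to[\nabla^2G(x_0)]^{-1}\textbf{c}_0$ ((\ref{id4.17})--(\ref{id4.14})). This is not a repairable-in-passing technicality: the $\lambda$-response of the spike location and of these kernel components produces the $H_3(x_0)$ block of $H(x_0)$ (the terms $\nabla(\Delta m)(x_0)\cdot\textbf{x}_1$ and $\textbf{c}_0\cdot[\nabla^2G(x_0)]^{-1}\textbf{c}_0$), so a computation organized as you describe would either stall at $O(h^3)$ or, if patched by symmetrization alone, miss part of the very coefficient whose sign the theorem hinges on.

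Second, differentiating your expansion of $d(\lambda)$ twice in $\lambda$ requires the expansion coefficients and the $\mathrm{o}(h^{N+4})$ remainder to be controlled in a $C^2_\lambda$ sense, which you do not address; even defining $\partial_\lambda u_h$ needs the uniqueness and non-degeneracy of $u_h$, which the paper establishes first (Lemma~\ref{lem2.2}, Theorems~\ref{thm2.7} and \ref{thm4.1}). The paper sidesteps the double differentiation entirely: from (\ref{id1.5}) one has the exact identity $d'(\lambda)=\frac12\int u_h^2\,dx$, hence $d''(\lambda)=h^N\int v_hR_h\,dy$ with $R_h=\partial_\lambda u_h(hy+x_h)$ solving $L_hR_h=v_h$; the criticality $p=1+\frac4N$ kills the leading contribution through $\int v_hR_0\,dy=0$, and only a one-$\lambda$-derivative expansion of $R_h$ (as in (\ref{id4.8})) is needed, fed into (\ref{id4.9}) to produce $H_2+H_3+H_4$. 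If you want to keep your route, you must (i) recentre the expansion at $x_h(\lambda)$ or include translation modes and track their $\lambda$-dependence, and (ii) justify term-by-term $\lambda$-differentiation of the remainder; otherwise the argument does not yield (\ref{id4.23}).
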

\noindent {\bf Remark 2:} Theorem~\ref{thm1.2}-\ref{thm1.4} may
include all the cases of values $\nabla V(x_0)$ and $\Delta
V(x_0)$ for the orbital stability problem of~(\ref{id1.1}) with
critical exponent~{(\ref{id1.2}). Theorem~\ref{thm1.3} may
generalize the main result of~\cite{[lw5]} to the case that the
function $m$ is a positive and nonconstant function. As $V\equiv
0$, Theorem~\ref{thm1.4} coincides with Theorem~\ref{thm1.1}
because of
\begin{equation*}
\nabla^2G(x_0)=m(x_0)^{-\frac{N}{2}-1}\Big[m(x_0)\nabla^2V(x_0)-\frac{N}{2}\big[V(x_0)
+\lambda\big]\nabla^2m(x_0)\Big]\,.
\end{equation*}

\noindent {\bf Remark 3:} In the following we give examples in dimension $N=2$. Similar examples in dimension $N=1$ and $3$ can also be given. Fist for $x\in\mathbb{R}$ we define
\begin{align*}
&X_1(x)=\sin x+\frac{1}{6}\sin^3x=\frac{9}{8}\sin x-\frac{1}{24}\sin(3x),\\
&X_2(x)=2(1-\cos x)+\frac{1}{3}(1-\cos x)^2=\frac{5}{2}-\frac{8}{3}\cos x+\frac{1}{6}\cos(2x),\\
&X_3(x)=\sin^3x=\frac{3}{4}\sin x-\frac{1}{4}\sin(3x),\\
&X_4(x)=4(1-\cos x)^2=6-8\cos x+2\cos(2x),
\end{align*}
respectively. Then $X_1,X_2,X_3$ and $X_4$ satisfy
\begin{align*}
&|X_1|\leq\frac{7}{6}, X_1'(0)=1, X_1^{(j)}(0)=0,\ \mathrm{for}\ 2\leq j\leq 4,\\
&0\leq X_2\leq\frac{16}{3},X_2''(0)=1, X_2^{(j)}(0)=0,\ \mathrm{for}\ j=1,3,4,\\
&|X_3|\leq1,X_3^{(3)}(0)=1, X_3^{(j)}(0)=0,\ \mathrm{for}\ j=1,2,4,\\
&0\leq X_4\leq16,X_4^{(4)}(0)=1, X_4^{(j)}(0)=0,\ \mathrm{for}\ j=1,2,3.
\end{align*}

Next for $(x,y)\in\mathbb{R}^2$ we set
\begin{align}\label{V}
V(x,y)=a_0+\sum\limits_{i=1}^4a_iX_i(x)++\sum\limits_{i=1}^4b_iX_i(y),
\end{align}
and
\begin{align}\label{m}
m(x,y)=c_0+\sum\limits_{i=1}^4c_iX_i(x)+\sum\limits_{i=1}^4d_iX_i(y),
\end{align}
where $a_i,b_i,c_i,$ and $d_i$ are constants. By the properties of $X_1,X_2,X_3$ and $X_4$, the $i$-th derivatives of $V$ and $m$ at $x_0=(0,0)$ depend only on $a_i,b_i$ and $c_i,d_i$ respectively for $1\leq i\leq4$. Recall that $G(x,y)=[V(x,y)+\lambda]m^{-1}(x,y)$ for $N=2$, we have
\begin{align*}
\nabla G(0)=c_0^{-2}\big(c_0a_1-(a_0+\lambda)c_1,c_0b_1-(a_0+\lambda)d_1\big)^T,
\end{align*}
and then if $\nabla G(0)=0$,
\begin{align*}
\nabla^2G(0)=c_0^{-2}
\begin{pmatrix}
c_0a_2-(a_0+\lambda)c_2&0\\0&c_0b_2-(a_0+\lambda)d_2
\end{pmatrix}.
\end{align*}

Now we can give examples for the potentials $V$ and $m$ which satisfy the assumptions in Theorems 1.2-1.4.

\begin{enumerate}[(I)]
\item (Examples for Theorem 1.2) $N=2,x_0=(0,0)$, $V$ and $m$ given in (\ref{V}) and (\ref{m}) and $a_i,b_i,c_i,d_i$ satisfy
\begin{align*}
&c_0=a_0+\lambda,(a_1,b_1)=(c_1,d_1)\neq0,a_2>c_2>0,b_2>d_2>0,\\
\mathrm{and}&\ c_0>\frac{7}{6}(|a_1|+|b_1|),a_0>\frac{7}{6}(|c_1|+|d_1|),
a_i=b_i=c_i=d_i=0\quad\mathrm{for}\ i=3,4,
\end{align*}

\item (Examples for Theorem 1.3) First a special case for Theorem 1.3 is that $\nabla m(x_0)=0,\nabla^2m(x_0)=0$ and $x_0$ is a non-degenerate critical point of $V(x)$. Here we give another examples. The first one is in the stability case and the second is in the instability case.
    \begin{enumerate}
    \item (Stability) $N=2,x_0=(0,0)$, $V$ and $m$ given in (\ref{V}) and (\ref{m}) and $a_i,b_i,c_i,d_i$ satisfy
        \begin{align*}
        a_0>0,c_0>-\frac{32}{3}c_2>0,c_0>-\frac{32}{3}d_2>0,a_2>0,b_2>0,a_i=b_i=c_i=d_i=0\ \mathrm{for}\ i=1,3,4,
        \end{align*}
        then for any $\lambda>0$, the conditions in Theorem 1.3 for orbital stability will be satisfied.

    \item (Instability) $N=2,x_0=(0,0)$, $V$ and $m$ given in (\ref{V}) and (\ref{m}) and $a_i,b_i,c_i,d_i$ satisfy
        \begin{align*}
        &a_0>-\frac{16}{3}b_2>0,c_0>-\frac{16}{3}c_2>0,a_2+b_2>0,d_2>0,\\
        \mathrm{and}&\ a_i=b_i=c_i=d_i=0\ \mathrm{for}\ i=1,3,4,
        \end{align*}
        then for any $\lambda>0$, the conditions in Theorem 1.3 for orbital instability will be satisfied.
    \end{enumerate}

\item (Examples for Theorem 1.4) Here we give two different examples. First we give examples in the case of $a_4=b_4=0$. Specially, Theorem 1.4 is in this case.
    \begin{enumerate}
    \item (Stability) $N=2,x_0=(0,0)$, $V$ and $m$ given in (\ref{V}) and (\ref{m}) and $a_i,b_i,c_i,d_i$ satisfy
        \begin{align*}
         &a_0>0,c_0>-\frac{32}{3}c_2>0,c_0>-\frac{32}{3}d_2>0,c_0>-32c_4>0,|c_2|,|d_2|\ \mathrm{small},\mathrm{or}\ c_0,|c_4|\ \mathrm{large},\\
         &\mathrm{and}\ a_i=b_i=0=c_3=d_3=d_4\ \mathrm{for}\ i=1,2,3,4,
        \end{align*}
        then for any $\lambda>0$, the conditions in Theorem 1.4 for orbital stability will be satisfied. Here $|c_2|,|d_2|$ small or $c_0,|c_4|$ large are independent on $\lambda$.

    \item (Instability) $N=2,x_0=(0,0)$, $V$ and $m$ given in (\ref{V}) and (\ref{m}) and $a_i,b_i,c_i,d_i$ satisfy
        \begin{align*}
         &a_0>0,c_0>-\frac{16}{3}c_2>0,d_2>0,c_0>-16c_4>0,|c_2|,|d_2|\ \mathrm{small},\mathrm{or}\ c_0,|c_4|\ \mathrm{large},\\
         &\mathrm{and}\ a_i=b_i=0=c_3=d_3=d_4\ \mathrm{for}\ i=1,2,3,4,
        \end{align*}
        then for any $\lambda>0$, the conditions in Theorem 1.4 for orbital instability will be satisfied. Here $|c_2|,|d_2|$ small or $c_0,|c_4|$ large are independent on $\lambda$.
    \end{enumerate}

    Second we give examples in the case of $a_4+b_4\neq0$.
    \begin{enumerate}[1]
    \item (Stability) $N=2,x_0=(0,0)$, $V$ and $m$ given in (\ref{V}) and (\ref{m}) and $a_i,b_i,c_i,d_i$ satisfy
        \begin{align*}
         &a_0>0,c_0>-\frac{32}{3}c_2>0,c_0>-\frac{32}{3}d_2>0,a_4>0,b_4>0,(a_4+b_4)\ \mathrm{large},\\
         &\mathrm{and}\ a_i=b_i=0=c_3=c_4=d_3=d_4\ \mathrm{for}\ i=1,2,3,
        \end{align*}
        then for fixed $\lambda>0$, the conditions in Theorem 1.4 for orbital stability will be satisfied. Here $(a_4+b_4)$ large may depend on $\lambda$.

    \item (Instability) $N=2,x_0=(0,0)$, $V$ and $m$ given in (\ref{V}) and (\ref{m}) and $a_i,b_i,c_i,d_i$ satisfy
        \begin{align*}
         &a_0>0,c_0>-\frac{16}{3}c_2>0,d_2>0,a_4>0,b_4>0,(a_4+b_4)\ \mathrm{large},\\
         &\mathrm{and}\ a_i=b_i=0=c_3=c_4=d_3=d_4\ \mathrm{for}\ i=1,2,3,4,
        \end{align*}
        then for fixed $\lambda>0$, the conditions in Theorem 1.4 for orbital instability will be satisfied. Here $(a_4+b_4)$ large may depend on $\lambda$.
    \end{enumerate}
\end{enumerate}

The rest of this paper is organized as follows: In Section 2, we
show the properties of $u_h$. Then we state the proof of
Theorem~\ref{thm1.1} in Section~3.
Theorem~\ref{thm1.2}-\ref{thm1.4} are proved in Section~4.

\noindent {\bf Acknowledgments:} The research of the first author is
partially supported by a grant from NCTS and NSC of Taiwan. The
research of the second author is partially supported by an Earmarked
Grant from RGC of Hong Kong.

\section{Preliminaries}
\ \ \ \ In this section, we study the properties of $u_h$ a
single-spike bound state of (\ref{id1.3}) concentrated at a
non-degenerate critical point of
$G(x):=\big[V(x)+\lambda\big]\,m^{-N/2}(x)$ (cf.~\cite{[G1]},
\cite{[WXZ]}). Let $x_h$ be the unique local maximum point of
$u_h$. So $x_h\rightarrow x_0$ as $h\to 0$.

Let $v_h(y):=u_h(hy+x_h)$ for all $y\in\mathbb{R}^N$. Then by
(\ref{id1.3}), $v_h$ is a positive solution of
\begin{equation}\label{eq2.1}
\Delta v-\big[V(hy+x_h)+\lambda\big]v+m(hy+x_h)v^p=0.
\end{equation}
For notation convenience, we still denote
\begin{equation}\label{id2.1}
L_h:=\Delta-\big[V(hy+x_h)+\lambda\big]+m(hy+x_h)pv_h^{p-1}
\end{equation}
as the linearized operator of the equation~(\ref{eq2.1}) with
respect to the solution~$v_h$. As the result of~\cite{[WXZ]},
$v_h$ can be written as $v_h=w_{x_h}+\phi_h$, where $w_{x_h}$ is
the unique positive solution of
\begin{equation} \label{eq2.2}
\left\{ \begin{aligned}
&\Delta w-\big[V(x_h)+\lambda\big]w+m(x_h)w^p=0\ \ \mbox{in} \ \ \mathbb{R}^N\,,\\
&w(0)=\max_{y \in \mathbb{R}^N} w(y)\,,\quad w(y)\to 0 \ \mbox{as}
\ |y|\to +\infty\,,
\end{aligned} \right.
\end{equation}
and
\begin{equation}\label{id1.8}
\|\phi_h\|_{\infty}\rightarrow0\quad\hbox{ as }\:h\to 0\,.
\end{equation}
Moreover, \begin{equation}\label{id1.8-1} \quad v_h(y)\leq
C|y|^{\frac{1-N}{2}}{\rm{exp}}\big(-\overline{V}^{1/2}|y|\big)\,,\quad\forall\,y\in\mathbb{R}^N\,,
\end{equation}
where $\overline{V}:=\inf_{\mathbb{R}^N}\big[V(x)+\lambda\big]$.
From~(\ref{eq2.2}), it is easy to check that
\begin{equation}\label{eq2.2-1}
w_{x_h}(y)=\big[V(x_h)+\lambda\big]^{\frac{1}{p-1}}m(x_h)^{-\frac{1}{p-1}}w(\sqrt{V(x_h)+\lambda}y)\,,
\end{equation}
where $w$ is the positive solution of (\ref{160}).

For the single-spike solution of~(\ref{id1.3}), we recall the
following result from~\cite{[WX]} and \cite{[WXZ]}:
\begin{lem}\label{lem2.1}
Assume that there are positive constants $\gamma$ and $C$ such that
\begin{equation}\label{id2.2}
|\nabla V(x)|,|\nabla m(x)|\leq
C{\rm{exp}}(\gamma|x|)\,,\quad\forall\,x\in\mathbb{R}^N\,.
\end{equation}
 Then
\begin{equation} \label{id2.3}
\int\limits_{\mathbb{R}^N}\Big[\frac{1}{p+1}\nabla
m(hy+x_h)v_h^{p+1}-\frac{1}{2}\nabla V(hy+x_h)v_h^2\Big]dy=0\
\end{equation}
 for $0<h<h_0$, where $h_0$ is a positive constant depending on
$\gamma$ and $\lambda$.
\end{lem}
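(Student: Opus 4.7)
\textbf{Proof plan for Lemma~\ref{lem2.1}.} The identity is a Pohozaev-type relation obtained by exploiting the translation invariance of the Laplacian: I would multiply the equation~(\ref{eq2.1}) by $\partial_{y_j} v_h$, integrate over $\mathbb{R}^N$, and integrate by parts. The component-wise version of the claimed vector identity then falls out after a division by $h$.

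Concretely, testing (\ref{eq2.1}) against $\partial_{y_j}v_h$ yields three terms. For the Laplacian term, the standard identity $\sum_i \partial_i v_h\,\partial_i\partial_j v_h = \partial_j\bigl(\tfrac{1}{2}|\nabla v_h|^2\bigr)$ together with integration by parts shows that
\begin{equation*}
\int_{\mathbb{R}^N} \Delta v_h\,\partial_{y_j}v_h\,dy = -\int_{\mathbb{R}^N}\partial_j\!\left(\tfrac{1}{2}|\nabla v_h|^2\right)dy = 0,
\end{equation*}
provided $|\nabla v_h|^2$ decays at infinity (which follows from standard elliptic estimates and the pointwise bound (\ref{id1.8-1})). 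For the potential term, write $v_h\,\partial_{y_j}v_h = \tfrac{1}{2}\partial_{y_j}(v_h^2)$, integrate by parts, and use $\partial_{y_j}[V(hy+x_h)+\lambda] = h\,(\partial_jV)(hy+x_h)$ to produce $\tfrac{h}{2}\int(\partial_jV)(hy+x_h)v_h^2\,dy$. The same manipulation on the nonlinear term, with $v_h^p\partial_{y_j}v_h = \tfrac{1}{p+1}\partial_{y_j}(v_h^{p+1})$, yields $-\tfrac{h}{p+1}\int(\partial_jm)(hy+x_h)v_h^{p+1}\,dy$. Adding these and dividing by $h$ produces the $j$th component of (\ref{id2.3}); letting $j$ range over $1,\dots,N$ gives the vector identity.

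The one place where the restriction $0<h<h_0$ enters is in justifying absolute convergence of the integrals and the vanishing of boundary terms in the integration by parts. From (\ref{id1.8-1}), $v_h$ decays like $|y|^{(1-N)/2}\exp(-\overline{V}^{1/2}|y|)$ with $\overline{V}\geq\lambda>0$, while the hypothesis (\ref{id2.2}) controls the growth of $\nabla V,\nabla m$ at the shifted argument $hy+x_h$ by $C\exp(\gamma|x_h|)\exp(\gamma h|y|)$. So the typical integrand is bounded by a polynomial times $\exp\bigl((\gamma h-2\overline{V}^{1/2})|y|\bigr)$, and choosing $h_0:=\overline{V}^{1/2}/\gamma$ (depending only on $\lambda$ and $\gamma$) forces the exponent to be negative for all $h<h_0$. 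The same estimate applied at large radii $R\to\infty$ shows the surface integrals from integration by parts vanish. No serious obstacle is anticipated; the only care needed is in bookkeeping the factors of $h$ produced by the chain rule and in ensuring that $v_h$ has enough regularity ($C^2$, by elliptic regularity applied to (\ref{eq2.1})) for the integration by parts to be valid.
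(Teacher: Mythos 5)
Your argument is correct and is exactly the standard translation (Pohozaev-type) identity proof: test (\ref{eq2.1}) with $\partial_{y_j}v_h$, integrate by parts, and use the exponential decay (\ref{id1.8-1}) against the growth hypothesis (\ref{id2.2}) to justify convergence and the vanishing of boundary terms for $h$ small; note the paper itself gives no proof but quotes the result from \cite{[WX]}, \cite{[WXZ]}, where this same argument appears. The only cosmetic point is the choice of $h_0$: since $\overline{V}\geq\lambda$, taking $h_0=\lambda^{1/2}/\gamma$ makes the constant depend only on $\gamma$ and $\lambda$ as stated, and your bound with $\overline{V}^{1/2}$ is subsumed by this.
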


In the rest of this section, for simplicity, we switch off the
potential $V$, i.e., set $V\equiv 0$. Then by Lemma~\ref{lem2.1}, we
obtain the uniqueness of $u_h$ as follows:
\begin{lem}\label{lem2.2}
Suppose (\ref{id2.2}) holds, $V\equiv 0$ and $x_0$ is a
non-degenerate critical point of $m$. Then $u_h$ is unique.
\end{lem}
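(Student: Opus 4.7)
The plan is to argue by contradiction. Suppose $u_h^{(1)}$ and $u_h^{(2)}$ are two single-spike solutions of (\ref{id1.3}) with $V\equiv 0$, both concentrating at $x_0$, and let $x_h^{(i)}$ be the unique local maximum of $u_h^{(i)}$. Set $v_h^{(i)}(y):=u_h^{(i)}(hy+x_h^{(i)})$, so by the discussion preceding the lemma $v_h^{(i)}=w_{x_h^{(i)}}+\phi_h^{(i)}$ with $\|\phi_h^{(i)}\|_\infty\to 0$ and $w_{x_h^{(i)}}$ radial around $0$.

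The first step is to pin down the peak locations. Applying Lemma~\ref{lem2.1} with $V\equiv 0$ to each solution gives
\[
\int_{\mathbb{R}^N}\nabla m\bigl(hy+x_h^{(i)}\bigr)\bigl(v_h^{(i)}\bigr)^{p+1}\,dy=0.
\]
Taylor expanding $\nabla m$ around $x_h^{(i)}$ and exploiting the radial symmetry of $w_{x_h^{(i)}}$ (so the $O(h)$ term vanishes) together with the exponential decay (\ref{id1.8-1}) yields $\nabla m(x_h^{(i)})=O(h^2)$. Because $x_0$ is a non-degenerate critical point of $m$, this forces $|x_h^{(i)}-x_0|=O(h^2)$, and in particular $z_h:=(x_h^{(2)}-x_h^{(1)})/h\to 0$.

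Next I would form the normalized difference. Let $\eta_h(y):=v_h^{(1)}(y)-v_h^{(2)}(y-z_h)$ and $\tilde\eta_h:=\eta_h/\|\eta_h\|_\infty$ (if $u_h^{(1)}\equiv u_h^{(2)}$ there is nothing to prove). Subtracting the rescaled equations (\ref{eq2.1}) satisfied by $v_h^{(1)}$ and $v_h^{(2)}(\cdot-z_h)$ and using the mean value theorem on the nonlinearity shows that $\tilde\eta_h$ satisfies a linear equation whose coefficients converge to those of $\Delta-\lambda+m(x_0)\,p\,w_{x_0}^{p-1}$, with uniform exponential decay inherited from (\ref{id1.8-1}). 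After the scaling (\ref{eq2.2-1}), standard elliptic estimates extract a subsequential limit $\tilde\eta_\infty$ with $L_0\tilde\eta_\infty=0$. By the nondegeneracy of $L_0$ (cf.~\cite{[CL]}, \cite{[K]}), $\tilde\eta_\infty=\sum_{j=1}^N c_j\,\partial_{y_j}w$ for constants $c_j$; the normalization $\|\tilde\eta_h\|_\infty=1$ combined with a standard decay/exclusion argument ensures the $c_j$ are not all zero.

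Finally, I would extract the needed algebraic constraint. Subtracting the two identities (\ref{id2.3}) for $i=1,2$, dividing by $\|\eta_h\|_\infty$, Taylor expanding $\nabla m$ to second order about $x_0$, and passing to the limit, the $O(1)$ contributions assemble into
\[
\nabla^2 m(x_0)\,\mathbf{c}=0,\qquad \mathbf{c}=(c_1,\ldots,c_N)^T,
\]
after using that the leading-order shift of the peaks by $z_h$ is captured by $\tilde\eta_\infty$. Non-degeneracy of $\nabla^2 m(x_0)$ yields $\mathbf{c}=0$, contradicting the previous step and hence proving uniqueness.

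The hardest part of this plan is the last step: extracting $\nabla^2 m(x_0)\,\mathbf{c}=0$ cleanly from (\ref{id2.3}). One has to carefully bookkeep three sources of $O(h)$ contributions --- the explicit translation $z_h$, the second-order Taylor term in $\nabla m$, and the cross-term between $\nabla m(x_h^{(i)})=O(h^2)$ and the odd part of $\phi_h^{(i)}$ --- and argue that after normalization only the term involving $\nabla^2 m(x_0)\mathbf{c}$ survives. This bookkeeping is the technical heart of the proof; the rest is standard concentration-compactness together with the Kwong uniqueness result encoded in $L_0$.
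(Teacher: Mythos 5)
Your proposal is correct and follows essentially the same route as the paper's proof: normalize the difference of the two solutions, use elliptic estimates to identify its limit with a kernel element $\sum_j c_j\partial_j w_{x_0}$ of the limiting linearized operator, and then subtract the two identities from Lemma~\ref{lem2.1} and use the non-degeneracy of $\nabla^2 m(x_0)$ to force $\mathbf{c}=0$, contradicting the non-triviality supplied by the maximum-principle localization of the maximizing points. The only cosmetic differences are that the paper rescales both solutions about the single point $x_0$ (so no translation $z_h$ is needed), and your intermediate claim $|x_h^{(i)}-x_0|=O(h^2)$ is stronger than what the step-one expansion actually gives at that stage (only $o(h)$, since $\|\phi_h\|_\infty\to0$ is all that is known), but $o(h)$ is all your argument requires.
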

\begin{proof}
Suppose $u_{h,1}$ and $u_{h,2}$ are different single-spike
solutions of~(\ref{id1.3}) concentrating at the same point $x_0$.
Let $v_1(y):=u_{h,1}(hy+x_0)$ and $v_2(y):=u_{h,2}(hy+x_0)$. Then
both $v_1$ and $v_2$ satisfy
$$\Delta v-\lambda v+m(hy+x_0)v^p=0\,,\quad\hbox{ for }\:y\in\mathbb{R}^N\,,$$
and $v_1,v_2\rightarrow w_{x_0}$ uniformly on $\mathbb{R}^N$ as
$h\to 0$. Due to $v_1\not\equiv v_2$, we may set
$$\widetilde{v}_h:=\frac{v_1-v_2}{\|v_1-v_2\|}_\infty\,,$$
and then $\widetilde{v}_h$ satisfies
\begin{equation}\label{eq2.3}
\Delta
\widetilde{v}_h-\lambda\widetilde{v}_h+m(x_0)pw_{x_0}^{p-1}\widetilde{v}_h
+[m(hy+x_0)-m(x_0)]pw_{x_0}^{p-1}\widetilde{v}_h+N(\widetilde{v}_h)=0,
\end{equation}
where
$N(\widetilde{v}_h)=m(hy+x_0)\big[v_1^p-v_2^p-pw_{x_0}^{p-1}(v_1-v_2)\big]
/\|v_1-v_2\|_\infty$. Hence by the standard elliptic PDE theorems on
the equation~(\ref{eq2.3}), we may take a subsequence
$\widetilde{v}_h\rightarrow \widetilde{v}_0$, where
$\widetilde{v}_0$ solves
$$\Delta \widetilde{v}_0-\widetilde{v}_0+m(x_0)pw_{x_0}^{p-1}\widetilde{v}_0=0.$$
Consequently, there exist constants $c_j$'s such that
\begin{equation}\label{id2.4}
\widetilde{v}_0=\sum\limits_{j=1}^N\,c_j\partial_jw_{x_0}\,.
\end{equation}

Let $y_h$ be such that
$\widetilde{v}_h(y_h)=\|\widetilde{v}_h\|_\infty=1$ (the same proof
applies if $\widetilde{v}_h(y_h)=-1$). Then by the Maximum
Principle, we have $|y_h|\leq C$. On the other hand,
as~(\ref{id2.3}), we may obtain
$$\int\limits_{\mathbb{R}^N}\nabla
m(hy+x_0)v_1^{p+1}dy=0=\int\limits_{\mathbb{R}^N}\nabla
m(hy+x_0)v_2^{p+1}dy.$$ Thus
\begin{equation}\label{id2.5}
\int\limits_{\mathbb{R}^N}\nabla
m(hy+x_0)\Big(\frac{v_1^{p+1}-v_2^{p+1}}{v_1-v_2}\Big)\widetilde{v}_hdy=0.
\end{equation}
Note that for all $i=1,\cdots,N$, as $h\rightarrow0$,
\begin{eqnarray*}
\partial_im(hy+x_0)=h\sum_{k=1}^{N}\partial_{ik}m(x_0)y_k+\mathrm{o}(h)\,,{\rm{and}}\,\,\,
\frac{v_1^{p+1}-v_2^{p+1}}{v_1-v_2}=(p+1)w_{x_0}^{p}+\mathrm{o}(1)\,.
\end{eqnarray*}
Hence from~(\ref{id2.4}) and (\ref{id2.5}), we may obtain
\begin{equation}
\begin{split}
0&=\int\limits_{\mathbb{R}^N}\Big[h\sum_{k=1}^{N}\partial_{ik}m(x_0)
y_k\Big](p+1)w_{x_0}^p\Big(\sum\limits_{j=1}^Nc_j\partial_j
w_{x_0}\Big)dy
+\mathrm{o}(h)\\
&=-h\sum\limits_{j=1}^N\partial_{ij}m(x_0)c_j\int\limits_{\mathbb{R}^N}w_{x_0}^{p+1}\,dy+\mathrm{o}(h)\,.
 \end{split}\nonumber
 \end{equation}
Hence by the assumption that $\nabla^2m(x_0)$ is non-degenerate,
$c_j=0$ for $j=1,\cdots,N$, i.e.,~$\widetilde{v}_0\equiv0$. This may
contradict to the fact that $1=\widetilde{v}_h(y_h)\rightarrow
\widetilde{v}_0(y_0)$ for some $y_0\in\mathbb{R}^N$. Therefore, we
may complete the proof of Lemma~\ref{lem2.2}.
\end{proof}

By Lemma~\ref{lem2.1}, we may simplify the proof of~\cite{[G]} and
get a shorter proof of the asymptotic behavior of $x_h$'s as
follows:
\begin{lem}\label{lem2.3}
Under the same hypotheses of Lemma~\ref{lem2.2},
\begin{equation}\label{eq2.4}
x_h=x_0+\mathrm{o}(h)\quad\hbox{ as }\:h\to 0\,.
\end{equation}
\end{lem}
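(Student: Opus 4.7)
The plan is to deduce the refined location of $x_h$ from the Pohozaev-type identity in Lemma~\ref{lem2.1}, which in the case $V\equiv 0$ reduces to
\begin{equation*}
\int_{\mathbb{R}^N}\nabla m(hy+x_h)\,v_h^{p+1}\,dy=0
\end{equation*}
for all sufficiently small $h$. Since $x_0$ is a critical point of $m$, a first-order Taylor expansion of $\nabla m$ at $x_h$, followed by an expansion of $\nabla m(x_h)$ around $x_0$, gives
\begin{equation*}
\nabla m(hy+x_h)=\nabla^2 m(x_0)(x_h-x_0)+h\,\nabla^2 m(x_0)\,y+R_h(y),
\end{equation*}
where the remainder satisfies $|R_h(y)|\le C(|x_h-x_0|^2+h^2|y|^2)$ on the support where $v_h$ is effectively concentrated.

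Second, I would control the integrals using the decay estimate~(\ref{id1.8-1}) and the uniform convergence $v_h\to w_{x_0}$. From $v_h\to w_{x_0}$ in $L^{p+1}$ combined with exponential decay, one obtains
\begin{equation*}
\int_{\mathbb{R}^N} v_h^{p+1}\,dy = M + \mathrm{o}(1), \qquad M:=\int_{\mathbb{R}^N} w_{x_0}^{p+1}\,dy>0,
\end{equation*}
and, crucially, the \emph{oddness} of $y\mapsto y\,w_{x_0}^{p+1}(y)$ (since $w_{x_0}$ is radial) forces
\begin{equation*}
\int_{\mathbb{R}^N} y\,v_h^{p+1}\,dy=\mathrm{o}(1) \qquad\text{as }h\to0.
\end{equation*}
The remainder term contributes $\mathrm{O}(|x_h-x_0|^2)+\mathrm{O}(h^2)$ after integrating against the exponentially-decaying $v_h^{p+1}$, which is swallowed by the leading order.

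Setting $\varepsilon_h:=x_h-x_0$ and combining the above in the Pohozaev identity yields
\begin{equation*}
0=M\,\nabla^2 m(x_0)\,\varepsilon_h \;+\; h\,\nabla^2 m(x_0)\cdot\mathrm{o}(1) \;+\; \mathrm{o}(|\varepsilon_h|) \;+\; \mathrm{O}(h^2).
\end{equation*}
Multiplying by $[\nabla^2 m(x_0)]^{-1}$, which exists by the non-degeneracy assumption, gives $M|\varepsilon_h|\le \mathrm{o}(|\varepsilon_h|)+\mathrm{o}(h)$, and absorbing $\mathrm{o}(|\varepsilon_h|)$ into the left-hand side for $h$ small produces $\varepsilon_h=\mathrm{o}(h)$, which is the claim.

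The main obstacle is the middle step: one must ensure that $\int y\,v_h^{p+1}\,dy$ is genuinely $\mathrm{o}(1)$ (not merely bounded) before dividing by $h$, and that the Taylor remainder is controlled by a \emph{uniform-in-$h$} integrable majorant. Both are handled by the uniform exponential decay~(\ref{id1.8-1}) together with $\|\phi_h\|_\infty\to 0$, which allows dominated convergence against the radial limit $w_{x_0}$. The non-degeneracy of $\nabla^2 m(x_0)$ then provides the inversion that converts an integral identity into the desired pointwise rate for $x_h-x_0$.
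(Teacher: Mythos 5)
Your proposal is correct and takes essentially the same route as the paper: both start from the identity of Lemma~\ref{lem2.1} with $V\equiv 0$, Taylor expand $\nabla m$ about $x_0$ using $\nabla m(x_0)=0$, kill the $\mathrm{O}(h)$ term through the radial symmetry of $w_{x_0}$ together with $v_h\to w_{x_0}$ and the uniform decay (\ref{id1.8-1}), and conclude by inverting the non-degenerate Hessian $\nabla^2 m(x_0)$. The only cosmetic difference is that you carry a quadratic remainder $\mathrm{O}(|x_h-x_0|^2+h^2)$ (available since $m$ is smooth enough in the theorem's setting), whereas the paper uses the weaker $\mathrm{o}(h)+\mathrm{o}(|x_h-x_0|)$ remainder, which suffices for the same conclusion.
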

\begin{proof}
Fix $i\in\{1,\cdots,N\}$ arbitrarily. By Taylor's expansion of
$\partial_i m(x)$ and $\nabla m(x_0)=0$, we obtain
$$\partial_i
m(hy+x_h)=\sum_{j=1}^{N}\partial_{ij}m(x_0)
(hy_j+x_{h,j}-x_{0,j})+\mathrm{o}(h)+\mathrm{o}(|x_h-x_0|).$$
Hence by Lemma~\ref{lem2.1} and $v_h=w_{x_0}+\mathrm{o}(1)$, we
have
\begin{equation*}
\begin{split}
0&=\int\limits_{\mathbb{R}^N}\partial_i
m(hy+x_h)v_h^{p+1}dy \\
&=
\sum\limits_{j=1}^N\partial_{ij}m(x_0)(x_{h,j}-x_{0,j})\int\limits_{\mathbb{R}^N}w_{x_0}^{p+1}dy
+\mathrm{o}(h)+\mathrm{o}(|x_h-x_0|)
  \end{split}
 \end{equation*}
Here we have used the fact that
$\int\limits_{\mathbb{R}^N}y_jw_{x_0}^{p+1}\,dy=0$ for
$j=1,\cdots,N$. Using the assumption that $\nabla^2m(x_0)$ is
non-degenerate, we obtain~(\ref{eq2.4}).
\end{proof}

Following the idea of~\cite{[lw5]}, we may use Lemma~\ref{lem2.3} to
show the asymptotic behavior of $v_h$ as follows:
\begin{lem}\label{lem2.4} Under the same hypotheses of Lemma~\ref{lem2.2},
\begin{equation}\label{eq2.5}
v_h=w_{x_h}+h^2\phi_2+\mathrm{o}(h^2)\,,\quad\hbox{ as
}\:h\rightarrow 0\,,
\end{equation}
where $\phi_2$ satisfies
\begin{equation}\label{eq2.6}
\Delta\phi_2-\lambda\phi_2+m(x_h)pw_{x_h}^{p-1}\phi_2
+\frac{1}{2}\sum_{i,j=1}^N\partial_{ij}m(x_0)y_iy_jw_{x_h}^p=0\,,{\rm{and}}\
\nabla\phi_2(0)=0.
\end{equation}
\end{lem}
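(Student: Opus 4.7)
The plan is to write $\phi_h := v_h - w_{x_h}$ as announced in (\ref{id1.8}), derive an equation for $\phi_h$ by subtracting the PDE (\ref{eq2.2}) from (\ref{eq2.1}) (with $V\equiv 0$), Taylor-expand all coefficients to second order in $h$, and identify the leading coefficient $\phi_2$ of $\phi_h/h^2$ as $h\to 0$. Concretely, subtracting the two equations yields
\begin{equation*}
\Delta \phi_h - \lambda \phi_h + m(hy+x_h)\bigl[v_h^{p}-w_{x_h}^{p}\bigr] + \bigl[m(hy+x_h)-m(x_h)\bigr] w_{x_h}^{p} = 0.
\end{equation*}
Expanding $v_h^{p}-w_{x_h}^{p}=pw_{x_h}^{p-1}\phi_h+O(\phi_h^{2})$, and Taylor-expanding $m$ about $x_h$, one obtains an inhomogeneous equation with leading operator $L_h$ defined as in (\ref{id2.1}) with $V\equiv 0$.

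The essential simplification is to observe that the would-be $O(h)$ inhomogeneity vanishes to that order: $\nabla m(x_h)=\nabla^{2}m(x_0)(x_h-x_0)+o(|x_h-x_0|)$, and Lemma \ref{lem2.3} gives $x_h-x_0=o(h)$, so $h\nabla m(x_h)\cdot y=o(h^{2})$ pointwise. Hence
\begin{equation*}
m(hy+x_h)-m(x_h)=\tfrac{h^{2}}{2}\sum_{i,j}\partial_{ij}m(x_0) y_i y_j + o(h^{2}),
\end{equation*}
uniformly on compact sets and with exponentially controlled tails using (\ref{id1.8-1}) and the polynomial growth assumption on $m''$. Plugging in and dividing by $h^{2}$ leads to
\begin{equation*}
\Delta \tilde\phi_h - \lambda \tilde\phi_h + m(x_h) p w_{x_h}^{p-1}\tilde\phi_h = -\tfrac{1}{2}\sum_{i,j}\partial_{ij}m(x_0) y_i y_j w_{x_h}^{p} + o(1),
\end{equation*}
where $\tilde\phi_h := \phi_h/h^{2}$.

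To close the argument I would, following the scheme of \cite{[lw5]}, establish an a priori bound $\|\phi_h\|_{C^{2}_{\mathrm{loc}}}=O(h^{2})$ by a contradiction/blow-up argument: if $\|\phi_h\|_\infty/h^{2}\to\infty$ along a subsequence, rescale and pass to the limit to get a bounded function in $\ker L_0$ orthogonal (via the Pohozaev-type identity of Lemma \ref{lem2.1}) to every $\partial_j w$, forcing it to be zero. This yields uniform $C^{2,\alpha}_{\mathrm{loc}}$ bounds on $\tilde\phi_h$, so by Arzel\`a--Ascoli (extract a subsequence and then note uniqueness of the limit gives convergence of the full sequence) $\tilde\phi_h\to\phi_2$, with $\phi_2$ satisfying (\ref{eq2.6}). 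Solvability of the limiting equation holds because the right-hand side is even in $y$ while $\ker L_0=\mathrm{span}\{\partial_j w_{x_h}\}$ consists of odd functions, so the Fredholm condition is automatic.

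The normalization $\nabla\phi_2(0)=0$ comes from the fact that $y=0$ is the unique local maximum of $v_h$, so $\nabla v_h(0)=0$; combined with $\nabla w_{x_h}(0)=0$ this gives $\nabla\phi_h(0)=0$ and therefore $\nabla\tilde\phi_h(0)=0$ for each $h$, which survives in the limit. This condition also uniquely pins down $\phi_2$ inside its affine solution class, because the homogeneous solutions $\sum c_j\partial_j w_{x_h}$ have gradient at $0$ equal to $w_{x_h}''(0)(c_1,\dots,c_N)^{T}$ with $w_{x_h}''(0)\neq 0$, so each $c_j$ is determined. The main obstacle is the blow-up argument for the a priori bound $\|\phi_h\|_{L^\infty}=O(h^{2})$, since one must handle simultaneously the translation kernel of $L_h$ and the exponentially weighted tail bounds coming from (\ref{id1.8-1}); everything else is bookkeeping of Taylor expansions.
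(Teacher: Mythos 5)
Your overall scheme is the same as the paper's: decompose $v_h=w_{x_h}+\phi_h$, use Lemma~\ref{lem2.3} together with $\nabla m(x_0)=0$ to reduce $m(hy+x_h)-m(x_h)$ to $\tfrac{h^2}{2}\sum_{i,j}\partial_{ij}m(x_0)y_iy_j+\mathrm{o}(h^2)$, prove $\|\phi_h\|_\infty=\mathrm{O}(h^2)$ by a blow-up/contradiction argument, and then identify $\phi_2$. The genuine gap is in the one step that carries the weight: your mechanism for excluding the kernel in the blow-up argument. You claim the blow-up limit is orthogonal to every $\partial_j w$ ``via the Pohozaev-type identity of Lemma~\ref{lem2.1}''. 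With $V\equiv0$ that identity reads $\int_{\mathbb{R}^N}\nabla m(hy+x_h)v_h^{p+1}dy=0$; expanding $v_h=w_{x_h}+\phi_h$ and $\partial_i m(hy+x_h)=\partial_i m(x_h)+h\sum_j\partial_{ij}m(x_0)y_j+\mathrm{O}(h^2)$, the odd moments of $w_{x_h}^{p+1}$ drop out and, after dividing by $h\|\phi_h\|_\infty$ and using $\int y_jw_{x_0}^p\partial_kw_{x_0}\,dy=-\tfrac{\delta_{jk}}{p+1}\int w_{x_0}^{p+1}dy$, what you actually get for the limit $\widetilde\phi_0=\sum_jc_j\partial_jw_{x_0}$ is $\nabla^2m(x_0)\mathbf{c}=\lim_h \nabla m(x_h)/\bigl(h\|\phi_h\|_\infty\bigr)$, up to errors of order $h/\|\phi_h\|_\infty$ and $\|\phi_h\|_\infty/h$. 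At this stage you only know $\nabla m(x_h)=\mathrm{o}(h)$ (from $x_h-x_0=\mathrm{o}(h)$) and $\|\phi_h\|_\infty\to0$, so neither the right-hand side nor the error quotients are under control; the identity alone does not force $\mathbf{c}=0$. Orthogonality is therefore not ``automatic'' from Lemma~\ref{lem2.1}, and as stated the contradiction does not close.

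The repair is precisely the fact you mention but use only to normalize $\phi_2$: since $x_h$ is the maximum point of $u_h$, we have $\nabla v_h(0)=0$, and $w_{x_h}$ is radial about the origin, so $\nabla\phi_h(0)=0$ and hence $\nabla\widetilde\phi_h(0)=0$ for every $h$. This passes to the blow-up limit; since $\widetilde\phi_0\in\mathrm{span}\{\partial_jw_{x_0}\}$ and $\nabla^2w_{x_0}(0)$ is a nonzero multiple of the identity, all $c_j$ vanish. This is exactly how the paper kills the kernel (no orthogonality identity is needed in this lemma; Lemma~\ref{lem2.1} enters later, in Lemmas~\ref{lem2.2}, \ref{lem2.3} and \ref{lem3.1}). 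You should also record, as the paper does, the maximum-principle step showing that the maximizing point $y_h$ of $\widetilde\phi_h$ stays in a bounded set, so that the limit is nontrivial and the contradiction with $\widetilde\phi_h(y_h)=1$ is legitimate; your remark about exponentially controlled tails is where that belongs. With these two points fixed, your identification of $\phi_2$ (passing to the limit of $\phi_h/h^2$, solvability by parity, uniqueness from $\nabla\phi_2(0)=0$) is equivalent to the paper's second step, which instead sets $\phi_{h,2}=\phi_h-h^2\phi_2$ and reruns the same contradiction argument to get $\phi_{h,2}=\mathrm{o}(h^2)$.
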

\begin{proof}
Let $\phi_h=v_h-w_{x_h}$. Then it is easy to check that
$|\phi_h|\rightarrow0$ uniformly, and $\phi_h$ satisfies
\begin{equation}\label{eq2.7}
\Delta\phi_h-\lambda\phi_h+m(hy+x_h)pw_{x_h}^{p-1}\phi_h+N(\phi_h)+R(\phi_h)=0\,,{\rm{and}}\
\nabla\phi_h(0)=0,
\end{equation}
where $$N(\phi_h)= m(hy+x_h)\Big[(w_{x_h}+\phi_h)^p- w_{x_h}^p - p
w_{x_h}^{p-1} \phi_h\Big],$$ and
$$R(\phi_h)=\Big[m(hy+x_h)-m(x_h)\Big]w_{x_h}^p.$$ Note that by
Lemma~\ref{lem2.3} and $\nabla m(x_0)=0$,
\begin{align}\label{id2.6}
m(hy+x_h)-m(x_h)
=&hy\cdot\nabla m(x_h)+\frac{h^2}{2}\sum_{i,j=1}^N
\partial_{ij}m(x_h)y_iy_j+\mathrm{o}(h^2)\notag\\
=&\frac{h^2}{2}\sum_{i,j=1}^N\partial_{ij}m(x_0)y_iy_j+\mathrm{o}(h^2).
\end{align}

Now we claim that $|\phi_h|\leq c\,h^2$ by contradiction. Suppose
that $h^{-2}\|\phi_h\|_{\infty}\rightarrow\infty$. Let
$\widetilde{\phi}_h=\phi_h/\|\phi_h\|_\infty$. Then
$\widetilde{\phi}_h$ satisfies
\begin{align}\label{eq2.8}
\Delta\widetilde{\phi}_h-\lambda\widetilde{\phi}_h+m(hy+x_h)pw_{x_h}^{p-1}\widetilde{\phi}_h
+\frac{N(\phi_h)}{\|\phi_h\|_{\infty}}+\frac{R(\phi_h)}{\|\phi_h\|_{\infty}}=0\,.
\end{align}
Note that by (\ref{id2.6}),
\begin{align}\label{id2.7}
\frac{R(\phi_h)}{\|\phi_h\|_{\infty}}\leq
C\frac{h^2}{\|\,\phi_h\|_{\infty}}\,.
\end{align}
Let $y_h$ be such that
$\widetilde{\phi}_h(y_h)=\|\widetilde{\phi}_h\|_\infty=1$ (the same
proof applies if $\widetilde{\phi}_h(y_h)=-1$). Then by
(\ref{eq2.8})$-$(\ref{id2.7}) and the Maximum Principle, we have
$|y_h|\leq C$. On the other hand, by the usual elliptic regularity
theory, we may take a subsequence
$\widetilde{\phi}_h\rightarrow\widetilde{\phi}_0$, where
$\widetilde{\phi}_0$ satisfies
\begin{align}
\Delta\widetilde{\phi}_0-\lambda\widetilde{\phi}_0+m(x_0)pw_{x_0}^{p-1}\widetilde{\phi}_0=0\,,{\rm{and}}\
\nabla\widetilde{\phi}_0(0)=0. \nonumber
\end{align} Hence $\widetilde{\phi}_0\equiv 0$. This
may contradict to the fact that
$1=\widetilde{\phi}_h(y_h)\rightarrow\widetilde{\phi}_0(y_0)$ for
some $y_0$. Therefore, we may complete the claim that
$|\,\phi_h|\leq c\,h^2$.

Now we set $\phi_{h,2}=\phi_h-h^2\phi_2$. Then
$\phi_{h,2}=\mathrm{O}(h^2)$ and satisfies
$$\Delta\phi_{h,2}-\lambda\phi_{h,2}+m(hy+x_h)pw_{x_h}^{p-1}\phi_{h,2}
+N(\phi_{h,2})+R(\phi_{h,2})=0\,,{\rm{and}}\,\nabla\phi_{h,2}(0)=0$$
where
$$N(\phi_{h,2})=m(hy+x_h)\Big[(w_{x_h}+h^2\phi_2+\phi_{h,2})^p-w_{x_h}^p
-pw_{x_h}^{p-1}(h^2\phi_2+\phi_{h,2})\Big],$$ and
$$R(\phi_{h,2})=\Big[m(hy+x_h)-m(x_h)-\frac{h^2}{2}\sum\limits_{i,j=1}^N
\partial_{ij}m(x_0)y_iy_j\Big]w_{x_h}^p+h^2\Big[m(hy+x_h)-m(x_h)\Big]pw_{x_h}^{p-1}\phi_2.$$
Thus as for previous argument, we may have
$\phi_{h,2}=\mathrm{o}(h^2)$ and complete the proof of
Lemma~\ref{lem2.4}.
\end{proof}

As for Proposition 3.1 of~\cite{[KW]}, one may get two lemmas as
follows:
\begin{lem}\label{lem2.5}
For $h$ small enough, the maps
\begin{align*}
L_{x_h}\phi:=\Delta
\phi-\big[V(x_h)+\lambda\big]\phi+m(x_h)pw^{p-1}_{x_h}\phi
\end{align*}
are uniformly invertible from $K_{x_h}^{\perp}$ to
$C_{x_h}^{\perp}$, where
\begin{align}
K_{x_h}^{\perp}=\left\{\phi\in
H^2(\mathbb{R}^N)\left|\int_{\mathbb{R}^N}\phi\partial_j
w_{x_h}dy=0\,,
j=1,\cdots,N\right.\right\}&\subset H^2(\mathbb{R}^N),\notag\\
C_{x_h}^{\perp}=\left\{\phi\in
L^2(\mathbb{R}^N)\left|\int_{\mathbb{R}^N}\phi\partial_j
w_{x_h}dy=0\,, j=1,\cdots,N\right.\right\}&\subset
L^2(\mathbb{R}^N).\notag
\end{align}
\end{lem}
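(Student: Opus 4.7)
The plan is to reduce the invertibility of $L_{x_h}$ on $K_{x_h}^\perp$ to the classical non-degeneracy of the limiting scalar operator $L_0 = \Delta - 1 + pw^{p-1}$, and then promote it to a statement uniform in $h$ by controlling the rescaling constants.

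Setting $\alpha_h := V(x_h)+\lambda$ and $\beta_h := m(x_h)$, the explicit formula (\ref{eq2.2-1}) gives $w_{x_h}(y) = (\alpha_h/\beta_h)^{1/(p-1)}\, w(\sqrt{\alpha_h}\,y)$, and therefore $\beta_h p w_{x_h}^{p-1}(y) = \alpha_h\, p w^{p-1}(\sqrt{\alpha_h}\,y)$. Under the change of variables $z = \sqrt{\alpha_h}\,y$, $\tilde{\phi}(z) := \phi(z/\sqrt{\alpha_h})$, a direct computation shows $L_{x_h}\phi(y) = \alpha_h\,(L_0 \tilde{\phi})(z)$. A parallel calculation on $\partial_j w_{x_h}(y) = (\alpha_h/\beta_h)^{1/(p-1)}\sqrt{\alpha_h}\,\partial_j w(\sqrt{\alpha_h}\,y)$ shows that the orthogonality $\int \phi\,\partial_j w_{x_h}\,dy = 0$ transforms (up to a multiplicative constant) into $\int \tilde{\phi}\,\partial_j w\,dz = 0$. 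Hence the subspaces $K_{x_h}^\perp$ and $C_{x_h}^\perp$ correspond bijectively, with norms comparable up to the Jacobian factor $\alpha_h^{-N/2}$, to the analogous $H^2$- and $L^2$-orthogonal complements of $\mathrm{span}\{\partial_j w\}_{j=1}^N$.

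The second step is to invoke the classical fact that $L_0$ is an isomorphism from $\{\phi \in H^2(\mathbb{R}^N) : \int \phi\,\partial_j w\,dz = 0,\ j=1,\dots,N\}$ onto $\{\psi \in L^2(\mathbb{R}^N) : \int \psi\,\partial_j w\,dz = 0\}$. This holds because $L_0$ is a self-adjoint Fredholm operator of index zero on $L^2(\mathbb{R}^N)$ (the potential $pw^{p-1}$ is a relatively compact perturbation of $\Delta - 1$, thanks to the exponential decay of $w$), and by the Kwong--Weinstein non-degeneracy theorems (\cite{[K]}, \cite{[We]}) its $H^2$-kernel is exactly $\mathrm{span}\{\partial_j w\}_{j=1}^N$. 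Restriction to the orthogonal complement therefore gives a bounded two-sided inverse.

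Finally, since $x_h \to x_0$ and $V, m$ are continuous, $\alpha_h = V(x_h)+\lambda$ is bounded above and away from zero uniformly for small $h$, and likewise for $\beta_h$ and the Jacobian $\alpha_h^{-N/2}$. Transferring the bound $\|L_0^{-1}\|$ through the rescaling then yields an estimate $\|\phi\|_{H^2} \leq C\,\|L_{x_h}\phi\|_{L^2}$ with $C$ independent of $h$, which is the uniform invertibility claimed. The only nontrivial point is the bookkeeping in this transfer: the $H^2$- and $L^2$-norms pick up different powers of $\sqrt{\alpha_h}$ under the rescaling $z = \sqrt{\alpha_h}\,y$, and one must track these powers together with the factor $\sqrt{\alpha_h}$ appearing in $\partial_j w_{x_h}$ to confirm that all constants remain uniformly controlled as $h \to 0$.
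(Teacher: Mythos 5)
Your proposal is correct, and it is a complete argument: the frozen-coefficient operator $L_{x_h}$ is \emph{exactly} conjugate, under the scaling $z=\sqrt{V(x_h)+\lambda}\,y$ built into (\ref{eq2.2-1}), to $\big[V(x_h)+\lambda\big]L_0$, the orthogonality constraints against $\partial_j w_{x_h}$ transform into those against $\partial_j w$ up to a nonzero constant, and the classical non-degeneracy of $w$ (kernel of $L_0$ in $H^2$ equal to ${\rm span}\{\partial_j w\}$, with $0$ isolated in the spectrum of the self-adjoint operator $L_0$) gives a bounded inverse on the orthogonal complement; since $V(x_h)+\lambda$ and $m(x_h)$ stay in fixed positive intervals as $h\to 0$, the constants in the norm bookkeeping are uniform. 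This is, however, a different route from the paper, which offers no proof at all and simply refers to Proposition 3.1 of \cite{[KW]}, where invertibility statements of this type are obtained by the standard indirect argument: assume $\phi_k\in K^\perp_{x_{h_k}}$ with $\|\phi_k\|_{H^2}=1$ and $\|L_{x_{h_k}}\phi_k\|_{L^2}\to 0$, pass to a limit by elliptic estimates and concentration-compactness-type localization, and contradict the non-degeneracy of $w$. Your exact-scaling reduction is cleaner and avoids compactness entirely, but it works only because the coefficients of $L_{x_h}$ are frozen at the single point $x_h$; the contradiction/compactness scheme of \cite{[KW]} is the one that generalizes to genuinely variable-coefficient operators such as $L_h$ in (\ref{id2.1}) (and to multi-bump settings), which is why the paper leans on it. One cosmetic remark: the non-degeneracy of the kernel is due to Kwong \cite{[K]} (see also Weinstein's modulational stability paper and Ni--Takagi/Oh); the reference \cite{[We]} as listed here is the sharp interpolation estimates paper and does not itself contain that statement, so you should adjust the attribution, though this does not affect the validity of your argument.
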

\begin{lem}\label{lem2.6}
The map
\begin{align}
L_{x_0}\phi:=\Delta
\phi-\big[V(x_0)+\lambda\big]\phi+m(x_0)pw_{x_0}^{p-1}\phi\nonumber
\end{align}
has eigenvalues $\mu_j\,, j=1,\cdots,N+2$ satisfying
  $$\mu_1>0=\mu_2=\cdots=\mu_{N+1}>\mu_{N+2}\geq\cdots\,,$$
 where the kernel of $L_{x_0}$ is spanned by $\partial_jw_{x_0}$, $j=1,\cdots,N$ and $\mu_1$ is simple.
\end{lem}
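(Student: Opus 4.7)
The plan is to reduce $L_{x_0}$ to the canonical operator $L_0 = \Delta - 1 + pw^{p-1}$ by an explicit rescaling, and then quote the classical spectral theory for $L_0$.

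First I would use the scaling identity (\ref{eq2.2-1}) at $x=x_0$, namely
$$w_{x_0}(y) = [V(x_0)+\lambda]^{1/(p-1)}\,m(x_0)^{-1/(p-1)}\,w\bigl(\sqrt{V(x_0)+\lambda}\,y\bigr),$$
and set $z = \sqrt{V(x_0)+\lambda}\,y$, $\phi(y) = \tilde\phi(z)$. A direct computation gives $\Delta_y\phi = [V(x_0)+\lambda]\Delta_z\tilde\phi$ and $m(x_0)pw_{x_0}(y)^{p-1} = [V(x_0)+\lambda]\,pw(z)^{p-1}$, so that
$$L_{x_0}\phi(y) = [V(x_0)+\lambda]\bigl(\Delta_z - 1 + pw^{p-1}\bigr)\tilde\phi(z) = [V(x_0)+\lambda]\,L_0\tilde\phi(z).$$
Because $V(x_0)+\lambda > 0$, the eigenvalue problem $L_{x_0}\phi = \mu\phi$ is equivalent to $L_0\tilde\phi = \mu[V(x_0)+\lambda]^{-1}\tilde\phi$. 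Hence the spectra of $L_{x_0}$ and $L_0$ are related by a strictly positive scaling factor, which preserves signs, simplicity, and the dimension of each eigenspace, and the scaling $y\mapsto z$ maps $\partial_j w_{x_0}$ to a positive multiple of $\partial_j w$.

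Next I would invoke the classical spectral theory for $L_0$ on $H^2(\mathbb{R}^N)$ (due to Weinstein, Kwong and others, cf.\ \cite{[K]}, \cite{[We]}): $L_0$ has exactly one positive eigenvalue, which is simple (with radially symmetric eigenfunction coming from the Morse index one of $w$ as a ground state); it has kernel of dimension exactly $N$, spanned by the translation modes $\partial_1 w,\ldots,\partial_N w$ (this uses the non-degeneracy of the ground state $w$, i.e.\ uniqueness and radial symmetry combined with the fact that any radial element of $\ker L_0$ must vanish); and the remainder of the spectrum lies strictly below zero and consists of discrete negative eigenvalues accumulating only at $-1$ together with essential spectrum $(-\infty,-1]$.

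Transferring these facts through the scaling yields the claimed ordering $\mu_1>0=\mu_2=\cdots=\mu_{N+1}>\mu_{N+2}\geq\cdots$ for $L_{x_0}$, with $\mu_1$ simple and $\ker L_{x_0} = \mathrm{span}\{\partial_j w_{x_0} : j=1,\ldots,N\}$. There is no real obstacle here, since the proof is just a scaling argument combined with a citation; the only point that requires care is verifying that $\partial_j w$ indeed lies in $\ker L_0$ (differentiate (\ref{160}) in $y_j$) and that the kernel has no additional elements, which is exactly the non-degeneracy statement established in \cite{[K]}, \cite{[We]}.
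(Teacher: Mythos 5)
Your proposal is correct and is essentially the same route the paper takes: the paper simply quotes this spectral fact (via Proposition 3.1 of \cite{[KW]}), whose content is exactly your rescaling of $L_{x_0}$ to $L_0$ combined with the classical facts that $L_0$ has one simple positive eigenvalue, kernel spanned by $\partial_j w$ (non-degeneracy, cf.\ \cite{[K]}, \cite{[We]}), and the rest of its spectrum strictly negative. The scaling identity and the transfer of sign, simplicity and multiplicity through the positive factor $V(x_0)+\lambda$ are carried out correctly, so no gap remains.
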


In this section, our main result is the small eigenvalue estimates
of $L_h$ given by
\begin{thm}\label{thm2.7}
Under the same hypotheses of Lemma~\ref{lem2.2}, for $h$ small
enough, the eigenvalue problem
\begin{align}\label{eq2.9}
L_h\varphi_h =\mu_h\varphi_h
\end{align}
has exactly $N$ eigenvalues $\mu_h^j\,, j=1,\cdots,N$, in the interval $[\frac{1}{2}\mu_1,\frac{1}{2}\mu_{N+2}]$, which satisfy
\begin{align}\label{id2.8} \frac{\mu_h^j}{h^2}\rightarrow
c_0\nu_{j}\,,\:(\hbox{up\ to\ a\ subsequence})\quad\hbox{ as }\:h\to
0\,,\quad\hbox{ for }\: j=1,\cdots, N,
\end{align}
where $\mu_1$ and $\mu_{N+2}$ are defined in Lemma~\ref{lem2.6},
$\nu_j$'s are the eigenvalues of the Hessian matrix $\nabla^2
m(x_0)$ and $c_0=\frac{N}{2m(x_0)}$ is a positive constant.
Furthermore, the corresponding eigenfunctions $\varphi_h^j$'s
satisfy
\begin{equation}
\varphi_h^j=\sum\limits_{i=1}^N\big[a_{ij}+\mathrm{o}(1)\big]\partial_{i}
w_{x_h}+\mathrm{O}(h^2)\,,\quad j=1,\cdots,N\,,
\end{equation}
where $\textbf{a}_j=(a_{1j},\cdots,a_{Nj})^T$ is the eigenvector
associated with $\nu_j$, namely,
\begin{equation}
\nabla^2 m(x_0)\textbf{a}_j=\nu_j\textbf{a}_j\,.\end{equation}
Here $o(1)$ is a small quantity tending to zero and $O(1)$ is a
bounded quantity as $h$ goes to zero.
\end{thm}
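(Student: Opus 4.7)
The plan is to combine a Riesz-projection / Lyapunov--Schmidt argument (for counting and isolation) with a matched asymptotic expansion (for the leading constant), in the spirit of Proposition~3.1 of~\cite{[KW]} and the analysis of~\cite{[lw5]}. First, since $L_h\to L_{x_0}$ as $h\to 0$ and, by Lemma~\ref{lem2.6}, $0$ is an isolated eigenvalue of $L_{x_0}$ of multiplicity $N$ with nearest neighbors $\mu_1$ and $\mu_{N+2}$, the Riesz projections of $L_h$ onto a small contour enclosing only $0$ have rank equal to $\dim\ker L_{x_0}=N$ for $h$ small; this produces exactly $N$ eigenvalues $\mu_h^j$ in the stated interval $[\tfrac12\mu_{N+2},\tfrac12\mu_1]$, all converging to $0$, with eigenspaces of $o(1)$ Hausdorff distance from $\ker L_{x_0} = \mathrm{span}\{\partial_i w_{x_0}\}_{i=1}^N$.

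The cleanest way to extract the leading constant is to differentiate the equation $\Delta v_h-\lambda v_h+m(hy+x_h)v_h^p=0$ in $y_j$, which yields $L_h\,\partial_j v_h = -h\,(\partial_j m)(hy+x_h)\,v_h^p$. Taylor-expanding $\partial_j m$ about $x_0$ and using $\nabla m(x_0)=0$, $x_h-x_0=o(h)$ (Lemma~\ref{lem2.3}) together with $v_h^p = w_{x_0}^p + o(1)$ gives
\begin{equation}
L_h\,\partial_j v_h \;=\; -h^2\,w_{x_0}^p\sum_{l=1}^N\partial_{jl}m(x_0)\,y_l\;+\;o(h^2),
\end{equation}
so each $\partial_j v_h$ is an approximate eigenfunction with eigenvalue of size $O(h^2)$. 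I then make the ansatz $\varphi_h^j=\sum_i a_{ij}^{(h)}\partial_i v_h + h^2\psi_h^j$ with $\psi_h^j\in K_{x_h}^\perp$ and $|\mathbf{a}_j^{(h)}|=1$, write $\mu_h^j=h^2\tilde\mu_j+o(h^2)$, substitute into $L_h\varphi_h^j=\mu_h^j\varphi_h^j$, and take $L^2$-inner product with $\partial_k w_{x_h}$. The contribution from $h^2\langle L_h\psi_h^j,\partial_k w_{x_h}\rangle$ is $o(h^2)$ because $L_h\partial_k w_{x_h}=(L_h-L_{x_h})\partial_k w_{x_h}=O(h^2)$ by Lemmas~\ref{lem2.3}--\ref{lem2.4}, and the integration by parts $\int y_l w_{x_0}^p\partial_k w_{x_0}\,dy=-\tfrac{\delta_{lk}}{p+1}\int w_{x_0}^{p+1}\,dy$ yields the reduced equation
\begin{equation}
\frac{1}{p+1}\left(\int w_{x_0}^{p+1}\right)\sum_{i=1}^N \partial_{ik}m(x_0)\,a_{ij}^{(0)}\;=\;\tilde\mu_j\,a_{kj}^{(0)}\,\frac{1}{N}\int|\nabla w_{x_0}|^2,\qquad k=1,\ldots,N.
\end{equation}

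At the critical exponent $p=1+\tfrac{4}{N}$, the Pohozaev identity combined with the equation tested against $w_{x_0}$ gives $\int|\nabla w_{x_0}|^2=\tfrac{N}{N+2}\,m(x_0)\int w_{x_0}^{p+1}$; substituting this collapses the reduced equation to
\begin{equation}
\nabla^2 m(x_0)\,\mathbf{a}_j^{(0)} \;=\; \frac{2\,m(x_0)}{N}\,\tilde\mu_j\,\mathbf{a}_j^{(0)},
\end{equation}
so $\mathbf{a}_j^{(0)}$ is an eigenvector of $\nabla^2 m(x_0)$ with eigenvalue $\nu_j$ and $\tilde\mu_j=\tfrac{N}{2m(x_0)}\nu_j=c_0\,\nu_j$, as claimed. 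The main technical obstacle is the rigorous justification of the $o(h^2)$ remainder in the ansatz: one must solve $\psi_h^j\in K_{x_h}^\perp$ from the resulting Fredholm equation $L_h\psi_h^j=(\text{explicit inhomogeneity})+o(1)$ with $h$-uniform bounds supplied by Lemma~\ref{lem2.5}, and rule out Jordan-block formation among the $N$ small eigenvalues, both of which are controlled by the non-degeneracy of $\nabla^2 m(x_0)$. The criticality $p=1+4/N$ enters the argument only at the final step, through the Pohozaev ratio that pins down the value of $c_0$.
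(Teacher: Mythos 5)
Your proposal is correct, and the overall skeleton matches the paper's: decompose the eigenfunction over the approximate kernel, invert on the orthogonal complement using Lemma~\ref{lem2.5}, reduce to an $N\times N$ matrix eigenvalue problem for $\nabla^2 m(x_0)$, and pin down $c_0=\frac{N}{2m(x_0)}$ via the Pohozaev relation $\int|\nabla w_{x_0}|^2=\frac{N}{N+2}m(x_0)\int w_{x_0}^{p+1}$. Where you genuinely diverge is in how the key matrix element is computed. The paper expands $\varphi_h$ over $\partial_j w_{x_h}$ and needs the identity \eqref{id2.13}, whose proof (Appendix~A) requires the second-order expansion $v_h=w_{x_h}+h^2\phi_2+\mathrm{o}(h^2)$ of Lemma~\ref{lem2.4} and an integration-by-parts manipulation involving $\phi_2$. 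You instead differentiate the rescaled equation to get the exact relation $L_h\,\partial_j v_h=-h\,(\partial_j m)(hy+x_h)\,v_h^p$, so that after Taylor expansion only Lemma~\ref{lem2.3} ($x_h=x_0+\mathrm{o}(h)$) and the leading-order convergence $v_h\to w_{x_0}$ are needed; testing against $\partial_k w_{x_h}$ then reproduces the same reduced equation, and the two formulations agree since $\int y_l w_{x_0}^p\partial_k w_{x_0}\,dy=-\frac{\delta_{lk}}{p+1}\int w_{x_0}^{p+1}dy$. This is a cleaner route to the leading term (Lemma~\ref{lem2.4} is still used, but only for the crude bound $L_h\partial_k w_{x_h}=\mathrm{O}(h^2)$ controlling the remainder). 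For the counting, you invoke Riesz projections, while the paper appeals to the perturbation arguments of~\cite{[W]}; your argument is legitimate here because the potential of $L_h$ converges to that of $L_{x_0}$ uniformly (thanks to the uniform exponential decay of $v_h$ and $w_{x_0}$), so one actually has norm-resolvent convergence and continuity of the spectral projections, which is slightly stronger than the strong resolvent convergence mentioned in Remark~4. Two small remarks: the ``Jordan-block'' worry is vacuous since $L_h$ is self-adjoint, and the a priori ansatz $\varphi_h^j=\sum_i a_{ij}^{(h)}\partial_i v_h+h^2\psi_h^j$ should, as in the paper, be replaced by a decomposition with an unspecified orthogonal part whose size $\mathrm{O}\big((h^2+|\mu_h|)\sum_j|a_h^j|\big)$ is then derived from Lemma~\ref{lem2.5}, after which $\mu_h=\mathrm{O}(h^2)$ follows from the reduced equation and $\|\varphi_h\|_{L^2}=1$.
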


\noindent {\bf Remark 4:} (1) Since $L_h$ converges to $L_{x_0}$ in the strong resolvent sense, in the interval $(\frac{1}{2}\mu_1,\infty)$ $L_h$ has only one positive eigenvalues $\mu_h^0$, which is simple and goes to $\mu_1$ as $h$ goes to $0$.

(2) After changing variables $t\mapsto t/h,y=(x-x_0)/h$, $L_h$ becomes $-R_h$, which is the notation used in page 190 of \cite{[GS1]}. Thus the number of negative eigenvalues of $R_h$ equals the number of positive eigenvalues of $L_h$, which we denote by $n(L_h)$.

(3) By (\ref{id2.8}), the sign of small eigenvalue $\mu_h^j$ of $L_h$ is the same as the one of eigenvalue $\nu_j$ of $\nabla^2m(x_0)$. If we denote the number of positive eigenvalues of $\nabla^2m(x_0)$ by $n$, then the number of positive eigenvalues of $L_h$ in the interval $[\frac{1}{2}\mu_1,\frac{1}{2}\mu_{N+2}]$ equals $n$. Adding another one in the interval $(\frac{1}{2}\mu_1,\infty)$, the number of positive eigenvalues of $L_h$ equals $n+1$. In particular, if $\nabla^2m(x_0)$ is negative definite, then $n=0$ and thus $n(L_h)=1$.

\begin{proof} We may follow the arguments given in Section~5 of~\cite{[W]}.
Assume that $\|\varphi_h\|_{L^2}=1$. By Lemma (\ref{lem2.6}) it is easy to see that
$\mu_{h}\rightarrow 0$ as $h\to 0$, where
$\mu_{h}\in\{\mu_{h}^1,\cdots,\mu_{h}^N\}$. Then the corresponding
eigenfunctions $\varphi_h$'s can be written as
\begin{align}\label{id2.9}
\varphi_h=\sum_{j=1}^Na_h^j\partial_j w_{x_h}+\varphi_h^\perp,
\end{align}
where $\varphi_h^\perp\in K_{x_h}^\perp$. Hence by (\ref{eq2.9})
and (\ref{id2.9}), $\varphi_h^\perp$ satisfies
\begin{equation}\label{eq2.10}\begin{array}{lll}
&\Delta\varphi_h^\perp-\lambda\varphi_h^\perp+m(x_h)pw_{x_h}^{p-1}\varphi_h^\perp+R(\varphi_h^\perp)
+\sum\limits_{j=1}^Na_h^jL_h\partial_j
w_{x_h}=\mu_h\left(\ds\sum_{j=1}^Na_h^j\partial_j
w_{x_h}+\varphi_h^\perp\right)\,,
\end{array}
\end{equation}
where
$$R(\varphi_h^\perp)=m(hy+x_h)p(v_h^{p-1}-w_{x_h}^{p-1})\varphi_h^\perp
+\Big[m(hy+x_h)-m(x_h)\Big]pw_{x_h}^{p-1}\varphi_h^\perp.$$
Using~(\ref{id2.6}) and Lemma~\ref{lem2.4}, we have
\begin{align}\label{id2.10}
L_h\partial_j w_{x_h} =m(hy+x_h)p(v_h^{p-1}-w_{x_h}^{p-1})\partial_j
w_{x_h}+\Big[m(hy+x_h)-m(x_h)\Big]pw_{x_h}^{p-1}\partial_j
w_{x_h}=\mathrm{O}(h^2).
\end{align}
From Lemma~\ref{lem2.5}, the map
$L_{x_h}=\Delta-\lambda+m(x_h)pw_{x_h}^{p-1}$ is uniformly
invertible in the space $K_{x_h}^\perp$. Thus by (\ref{id2.10}) and
$\mu_h\rightarrow0$, we have
\begin{align}\label{id2.11}
\|\varphi_h^\perp\|_{H^2}\leq c(h^2+|\mu_h|)\sum_{j=1}^N\,|a_h^j|\,.
\end{align}

To estimate $\mu_h$ and $a_h^j$'s, multiplying (\ref{eq2.10}) by
$\partial_k w_{x_h}$ and integrating over $\mathbb{R}^N$, we may
obtain
\begin{align}\label{eq2.11}
\int_{\mathbb{R}^N}\left(L_h\varphi_h^\perp\right)\partial_k
w_{x_h}dy +\sum_{j=1}^Na_h^j\int_{\mathbb{R}^N}\left(L_h\partial_j
w_{x_h}\right)\partial_k w_{x_h}dy
=\mu_h\sum_{j=1}^Na_h^j\int_{\mathbb{R}^N}\partial_j
w_{x_h}\partial_k w_{x_h}dy\,.
\end{align} Here we have used the fact that $\varphi_h^\perp\in
K_{x_h}^\perp$. Using (\ref{id2.10}), (\ref{id2.11}),
$\mu_h=\mathrm{o}(1)$ and integration by parts, we obtain
\begin{align}\label{id2.12}
\int_{\mathbb{R}^N}\left(L_h\varphi_h^\perp\right)\partial_k
w_{x_h}dy =\int_{\mathbb{R}^N}\varphi_h^\perp L_h\partial_k
w_{x_h}dy=\mathrm{o}(h^2)\,,
\end{align}
and
\begin{align}\label{id2.13}
\int_{\mathbb{R}^N}\left(L_h\partial_j w_{x_h}\right)\partial_k
w_{x_h}dy=\frac{h^2}{p+1}\int_{\mathbb{R}^N}w_{x_h}^{p+1}dy\partial
_{jk}m(x_0)+\mathrm{o}(h^2)\,,
\end{align}
which we have proved in Appendix A. Substituting~(\ref{id2.12})
and~(\ref{id2.13}) into~(\ref{eq2.11}), we may obtain
\begin{equation*}
\frac{1}{p+1}\int_{\mathbb{R}^N}w_{x_h}^{p+1}dy\sum\limits_{j=1}^N\partial_{jk}m(x_0)a_h^j
=\frac{\mu_h}{h^2}a_h^k\int\limits_{\mathbb{R}^N}\big(\partial_k
w_{x_h}\big)^2dy+\mathrm{o}(1).
\end{equation*}
Since $\|\varphi_h\|_{L^2}=1$, (\ref{id2.9}) implies that
$\textbf{a}_h:=(a_h^1,\cdots,a_h^N)^T$ is bound. Moreover, by
(\ref{id2.11}), $\textbf{a}_h$ does not converge to $0$. Thus
$\frac{\mu_h^j}{h^{2}}\rightarrow c_0\nu_j$ for $j=1,\cdots,N$ and
$\textbf{a}_h\rightarrow\textbf{a}_j$ , where
$$c_0=\frac{N\int_{\mathbb{R}^N}w_{x_0}^{p+1}dy}{(p+1)\int_{\mathbb{R}^N}|\nabla
w_{x_0}|^2dy}=\frac{N}{2m(x_0)},$$ and $\textbf{a}_j$ is the
eigenvector corresponding to $\nu_j$. Here we have use the fact that
\begin{align*}
\int_{\mathbb{R}^N}|\nabla
w_{x_0}|^2dy=\frac{N}{N+2}m(x_0)\int_{\mathbb{R}^N}w_{x_0}^{p+1}dy\,,
\end{align*}
which can be proved by Pohozeve identity. The rest of the proof
follows from a perturbation result, similar to page 1473-1474
of~\cite{[W]}. We may omit the details here.
\end{proof}

\section{Proof of Theorem~\ref{thm1.1}}
\ \ \ \ \ \ In this Section, we firstly study the asymptotic
expansion of $d\,''(\lambda)$ as $h\rightarrow 0$, and then
complete the proof of Theorem~\ref{thm1.1}. To drive the
$\mathrm{O}(h^4)$ order terms of $d\,''(\lambda)/h^N$, we need the
following lemma:
\begin{lem}\label{lem3.1}
Under the same hypotheses of Lemma~\ref{lem2.2},
\begin{align}\label{id3.1}
x_h=x_0+h^2\textbf{x}_1+\mathrm{O}(h^3)\,,\quad\hbox{ as }\:
h\rightarrow 0\,,
\end{align}
where $\textbf{x}_1\in\mathbb{R}^N$ satisfies
\begin{equation}\label{id3.2}
\nabla^2m(x_0)\textbf{x}_1=-\frac{\int\limits_{\mathbb{R}^N}|y|^2w^{p+1}\,dy}
{2N\lambda\int\limits_{\mathbb{R}^N}w^{p+1}\,dy}\nabla(\Delta
m)(x_0).
\end{equation}
\end{lem}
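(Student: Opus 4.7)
The plan is to refine the vanishing-moment argument of Lemma~\ref{lem2.3} by going one Taylor order higher in the Pohozaev-type identity from Lemma~\ref{lem2.1}. With $V\equiv 0$ that identity reads $\int_{\mathbb{R}^N}\partial_i m(hy+x_h)\,v_h^{p+1}\,dy=0$ for each $i=1,\ldots,N$. Setting $\xi_h:=x_h-x_0$ (which is $\mathrm{o}(h)$ by Lemma~\ref{lem2.3}) and using $\nabla m(x_0)=0$, I would Taylor expand up to third order:
\begin{equation*}
\partial_i m(hy+x_h)=\sum_{j}\partial_{ij}m(x_0)(hy_j+\xi_{h,j})+\frac{1}{2}\sum_{j,k}\partial_{ijk}m(x_0)(hy_j+\xi_{h,j})(hy_k+\xi_{h,k})+\mathrm{O}\bigl((h|y|+|\xi_h|)^3\bigr),
\end{equation*}
then substitute $v_h=w_{x_h}+\mathrm{O}(h^2)$ from Lemma~\ref{lem2.4} and integrate against $v_h^{p+1}$, using the exponential decay~(\ref{id1.8-1}) to absorb polynomial weights in $y$.

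The key simplification comes from the radial symmetry of $w_{x_h}$ in $y$: the first moments $\int y_j\,w_{x_h}^{p+1}\,dy$ vanish identically, and $\int y_j y_k\,w_{x_h}^{p+1}\,dy=\frac{\delta_{jk}}{N}\int|y|^2 w_{x_h}^{p+1}\,dy$. The linear-in-$hy$ term is thereby annihilated, while the quadratic-in-$hy$ term diagonalizes to produce $\partial_i(\Delta m)(x_0)$. After collecting, the identity reduces to
\begin{equation*}
\sum_j\partial_{ij}m(x_0)\xi_{h,j}\int_{\mathbb{R}^N}w_{x_0}^{p+1}\,dy+\frac{h^2}{2N}\partial_i(\Delta m)(x_0)\int_{\mathbb{R}^N}|y|^2 w_{x_0}^{p+1}\,dy=\mathrm{O}(h^3)+\mathrm{O}(|\xi_h|^2)+\mathrm{O}(h^2|\xi_h|).
\end{equation*}
Invoking the non-degeneracy of $\nabla^2 m(x_0)$ and a one-step bootstrap (the initial estimate $\xi_h=\mathrm{o}(h)$ upgrades to $\xi_h=\mathrm{O}(h^2)$, after which $|\xi_h|^2$ and $h^2|\xi_h|$ are both $\mathrm{O}(h^4)$) yields $\xi_h=h^2\textbf{x}_1+\mathrm{O}(h^3)$ with
\begin{equation*}
\nabla^2 m(x_0)\textbf{x}_1=-\frac{1}{2N}\frac{\int_{\mathbb{R}^N}|y|^2 w_{x_0}^{p+1}\,dy}{\int_{\mathbb{R}^N}w_{x_0}^{p+1}\,dy}\nabla(\Delta m)(x_0).
\end{equation*}
The target formula~(\ref{id3.2}) then follows from the rescaling $w_{x_0}(y)=\lambda^{N/4}m(x_0)^{-N/4}w(\sqrt{\lambda}\,y)$ read off from (\ref{eq2.2-1}): in the substitution $z=\sqrt{\lambda}\,y$, the $m(x_0)$ prefactors cancel in the ratio and exactly one factor of $\lambda$ survives in the denominator.

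The main difficulty lies in the careful bookkeeping of the lower-order error terms. One must verify that: (a) the $h^2\phi_2$ correction from Lemma~\ref{lem2.4}, although not radial, contributes only $\mathrm{O}(h^3)$ after being multiplied by the odd factor $hy_j$; (b) the mixed terms $hy_j\xi_{h,k}w_{x_h}^{p+1}$ vanish at leading order by the same odd-symmetry argument, so that the triple product $h\xi_h$ does not enter the main balance; and (c) the replacement of $w_{x_h}$ by $w_{x_0}$ in the integrals introduces only an $\mathrm{O}(|\xi_h|)$ error that, after the bootstrap, is absorbed into the $\mathrm{O}(h^3)$ remainder. All of these estimates are controlled uniformly in $y$ by the exponential tail bound~(\ref{id1.8-1}). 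Otherwise, the argument is structurally the same as for Lemma~\ref{lem2.3}, simply pushed one Taylor order further.
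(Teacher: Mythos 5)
Your proposal is correct and follows essentially the same route as the paper: both exploit the identity of Lemma~\ref{lem2.1} with $V\equiv 0$, Taylor expand $\partial_i m$ one order further, use the vanishing first moments and the isotropy of the second moments of $w_{x_h}^{p+1}$, and invoke the non-degeneracy of $\nabla^2 m(x_0)$ in a two-step argument (first $x_h-x_0=\mathrm{O}(h^2)$, then the refined balance), finishing with the rescaling $w_{x_0}(y)=\lambda^{N/4}m(x_0)^{-N/4}w(\sqrt{\lambda}\,y)$. Your error bookkeeping, including the $\mathrm{O}(h^3)$ contribution of the $h^2\phi_2$ correction and the absorption of $|\xi_h|^2$ and $h^2|\xi_h|$ after the bootstrap, matches the paper's treatment.
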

\begin{proof}
By Lemma~\ref{lem2.3} and $\nabla m(x_0)=0$, for all $i=1,\cdots,N$,
we have
\begin{equation}\label{id3.3}
\partial_i m(hy+x_h)=\sum\limits_{j=1}^N\partial_{ij}m(x_0)\big(hy_j+x_{h,j}-x_{0,j}\big)+\mathrm{O}(h^2).
\end{equation}
 Then by
(\ref{id2.3}), (\ref{id3.3}) and Lemma~\ref{lem2.4}, we have
\begin{equation}
\begin{split}
0&=\int\limits_{\mathbb{R}^N}\partial_i m(hy+x_h)v_h^{p+1}dy\\
 &=\sum\limits_{j=1}^N\partial_{ij}m(x_0)
 \int\limits_{\mathbb{R}^N}\big(hy_j+x_{h,j}-x_{0,j}\big)
 \Big[w_{x_h}^{p+1}+\mathrm{O}(h)\Big]dy+\mathrm{O}(h^2) \\
&=\sum\limits_{j=1}^N\partial_{ij}m(x_0)\big(x_{h,j}-x_{0,j}\big)\int\limits_{\mathbb{R}^N}w_{x_0}^{p+1}dy
+ \mathrm{O}(h^2)\,.
  \end{split}\nonumber
 \end{equation}
Here we have used the fact that
$\int\limits_{\mathbb{R}^N}y_jw_{x_h}^{p+1}\,dy=0$ for
$j=1,\cdots,N$. Thus $x_h=x_0+\mathrm{O}(h^2)$. Consequently, we may
set $x_h=x_0+h^2\overline{x}_h$. Then $\overline{x}_h=\mathrm{O}(1)$
and by Taylor's formula of $\partial_i m(x)$, we have
\begin{eqnarray}\label{id3.4}
\partial_i m(hy+x_h)=\sum\limits_{j=1}^N\partial_{ij}m(x_0)\big(hy_j+h^2\overline{x}_h\big)
+\frac{h^2}{2}\sum\limits_{j,k=1}^N\partial_{ijk}
m(x_0)y_jy_k+\mathrm{O}(h^3).
\end{eqnarray}
Hence by(\ref{id2.3}), (\ref{id3.4}) and Lemma~\ref{lem2.4}, we may
obtain
\begin{align*}
0=&h^2\sum\limits_{j=1}^N\partial_{ij}
m(x_0)\overline{x}_{h,j}\int\limits_{\mathbb{R}^N}w_{x_h}^{p+1}dy
+\frac{h^2}{2}\sum\limits_{j,k=1}^N\partial_{ijk}
m(x_0)\int\limits_{\mathbb{R}^N}y_jy_kw_{x_h}^{p+1}dy
+\mathrm{O}(h^3)\nonumber\\
=&h^2\sum\limits_{j=1}^N\partial_{ij}m(x_0)\overline{x}_{h,j}\int\limits_{\mathbb{R}^N}w_{x_0}^{p+1}dy
+\frac{h^2}{2N}\sum\limits_{k=1}^N\partial_{ikk}
m(x_0)\int\limits_{\mathbb{R}^N}|y|^2w_{x_0}^{p+1}dy
+\mathrm{O}(h^3).
\end{align*}
Here we have used the fact that
\begin{align*}
\begin{cases}
&\int\limits_{\mathbb{R}^N}y_jw_{x_0}^{p+1}dy=0\,,\quad\forall j=1,\cdots,N,\\
&\int\limits_{\mathbb{R}^N}y_jy_kw_{x_0}^{p+1}=\frac{\delta_{jk}}{N}
\int\limits_{\mathbb{R}^N}|y|^2w_{x_0}^{p+1}dy\,,\quad\forall
j,k=1,\cdots,N.
\end{cases}
\end{align*}
Therefore, we may complete the proof because
$$w_{x_0}(y)=\lambda^{N/4}m(x_0)^{-N/4}w(\sqrt\lambda y)\,.$$
\end{proof}

From Lemma~\ref{lem2.4} and~\ref{lem3.1}, we may deduce that
\begin{thm}\label{thm3.2}
Under the same hypotheses of Lemma~\ref{lem2.2}, for $h$ small
enough, $u_h$ is smooth on $\lambda$. Let $R_h:=\frac{\partial
u_h}{\partial\lambda}(hy+x_h)$. Then
\begin{equation}\label{eq3.4}
L_hR_h-v_h=0.
\end{equation}
and
\begin{align}\label{id3.7}
R_h=R_0+\sum\limits_{j=1}^Nc_h^j\partial_j
w_{x_h}+h^2R_1+R_h^\perp\,,
\end{align}
where $R_0=\lambda^{-1}\big(\frac{1}{p-1}v_h+\frac{1}{2}y\cdot\nabla
v_h\big)$, $c_h^j=\mathrm{O}(h)$, $R_h^\perp=\mathrm{O}(h^3)$ and
$R_1$ satisfies
\begin{equation}\label{eq3.2}
\Delta R_1-\lambda
R_1+m(x_h)pw_{x_h}^{p-1}R_1-\frac{1}{2\lambda}\sum\limits_{i,j=1}^N\partial_{ij}m(x_0)y_iy_jw_{x_h}^p=0\,.
\end{equation}
Furthermore,
\begin{equation}\label{id3.9}
\nabla^2m(x_0)\big(h^{-1}\textbf{c}_h\big)\rightarrow-\frac{\int\limits_{\mathbb{R}^N}|y|^2w^{p+1}\,dy}
{2N\lambda^2\int\limits_{\mathbb{R}^N}w^{p+1}\,dy}\nabla(\Delta
m)(x_0)\,,\quad\hbox{ as }\:h\rightarrow0\,,
\end{equation} where $\textbf{c}_h:=(c_h^1,\cdots,c_h^N)^T$.
\end{thm}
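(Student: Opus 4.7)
I would argue in three stages: (i) derive the linear equation $L_h R_h = v_h$; (ii) construct the two-term approximation $R_0+h^2R_1$ and reduce the problem to a finite-dimensional equation for the kernel coefficients $\textbf{c}_h$; (iii) extract the limit (\ref{id3.9}) by differentiating the Pohozaev-type identity of Lemma~\ref{lem2.1} with respect to $\lambda$.

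\emph{Smoothness and the linearized equation.} By Lemma~\ref{lem2.6} and Theorem~\ref{thm2.7}, for every small fixed $h>0$ the operator $L_h$ has no zero eigenvalue: the $N$ small eigenvalues are of order $h^2$ and non-zero since $\nabla^2 m(x_0)$ is non-degenerate, and the rest of the spectrum is bounded away from $0$. Hence $F(u,\lambda):=h^2\Delta u-\lambda u+mu^p$ has, via the implicit function theorem, a smooth $\lambda$-branch $u_h(\lambda)$ near the single-spike solution, and differentiating $F(u_h,\lambda)\equiv 0$ yields $(h^2\Delta_x-\lambda+mpu_h^{p-1})(\partial_\lambda u_h)=u_h$. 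Under $y=(x-x_h)/h$, the relation $h^2\Delta_x=\Delta_y$ converts this to $L_h R_h=v_h$ with $L_h$ as in (\ref{id2.1}).

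\emph{Two-term approximation.} Using $\Delta v_h=\lambda v_h-m(hy+x_h)v_h^p$ together with the identity $\Delta(y\cdot\nabla v_h)=2\Delta v_h+y\cdot\nabla\Delta v_h$, a direct computation gives $L_h R_0=v_h-\frac{h}{2\lambda}(y\cdot\nabla m(hy+x_h))v_h^p$. Taylor expanding $\nabla m$ about $x_0$ and invoking $\nabla m(x_0)=0$ together with $x_h-x_0=O(h^2)$ from Lemma~\ref{lem3.1} reduces the residual to $-\frac{h^2}{2\lambda}\sum_{i,j=1}^N\partial_{ij}m(x_0)y_iy_j w_{x_h}^p+O(h^3)$. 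This is exactly what $-h^2 L_{x_h}R_1$ annihilates when $R_1$ is defined by (\ref{eq3.2}); solvability of (\ref{eq3.2}) is automatic since the right-hand side has only $l=0$ and $l=2$ spherical-harmonic content, hence is orthogonal to the $l=1$ kernel of $L_{x_h}$. Writing $R_h^\perp:=R_h-R_0-h^2R_1-\sum_{j=1}^N c_h^j\partial_j w_{x_h}\in K_{x_h}^\perp$ and applying $L_h$, one combines $L_h\partial_j w_{x_h}=O(h^2)$ from (\ref{id2.10}), the asymptotic (\ref{id2.13}), and the uniform invertibility of $L_h|_{K_{x_h}^\perp}$ (Lemma~\ref{lem2.5}) to conclude $\textbf{c}_h=O(h)$ and $R_h^\perp=O(h^3)$.

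\emph{Leading order of $\textbf{c}_h$.} Pinning down the limit (\ref{id3.9}) is the main obstacle. Rather than expanding $\int R_h\partial_k w_{x_h}\,dy$ directly, I would differentiate Lemma~\ref{lem2.1}'s identity in $\lambda$ and rescale to obtain
\begin{equation*}
\int_{\mathbb{R}^N}\nabla m(hy+x_h)\,v_h^p\,R_h\,dy=0.
\end{equation*}
Substituting the decomposition of $R_h$ (with $R_0^{(0)}:=\lambda^{-1}(\tfrac{1}{p-1}w_{x_0}+\tfrac{1}{2}y\cdot\nabla w_{x_0})$ the $h\to0$ limit of $R_0$), Taylor-expanding $\partial_i m(hy+x_h)$ to order $h^2$ (the $h^2$ contributions coming from $h^2\nabla^2 m(x_0)\textbf{x}_1$ via Lemma~\ref{lem3.1} and from $\tfrac{h^2}{2}\sum_{j,k}\partial_{ijk}m(x_0)y_jy_k$), and collecting the $O(h^2)$-terms produces a linear equation for $\nabla^2 m(x_0)(h^{-1}\textbf{c}_h)$ with two source terms weighted by $\int w_{x_0}^p R_0^{(0)}\,dy$ and $\int|y|^2 w_{x_0}^p R_0^{(0)}\,dy$. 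Two identities peculiar to the critical exponent $p=1+4/N$, equivalent to $2(p+1)=(N+2)(p-1)$, close the argument: integration by parts using $w_{x_0}^p\partial_l w_{x_0}=\tfrac{1}{p+1}\partial_l(w_{x_0}^{p+1})$ yields $\int|y|^2 w_{x_0}^p R_0^{(0)}\,dy=0$ and $(p+1)\int w_{x_0}^p R_0^{(0)}\,dy=\lambda^{-1}\int w_{x_0}^{p+1}\,dy$. Combining these with Lemma~\ref{lem3.1}'s formula for $\textbf{x}_1$ produces exactly (\ref{id3.9}).
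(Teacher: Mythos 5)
Your proposal is correct, and stages (i)--(ii) follow essentially the paper's own route: the same smoothness/non-degeneracy argument via Lemma~\ref{lem2.2} and Theorem~\ref{thm2.7}, the same decomposition (\ref{id3.7}), the same residual computation $L_hR_0=v_h-\frac{h}{2\lambda}y\cdot\nabla m(hy+x_h)v_h^p$, and the same bootstrap through Lemma~\ref{lem2.5}, (\ref{id2.10}) and (\ref{id2.13}) to get $c_h^j=\mathrm{O}(h)$, $R_h^\perp=\mathrm{O}(h^3)$. Where you genuinely diverge is the derivation of the limit (\ref{id3.9}). The paper multiplies the remainder equation (\ref{id3.10}) by $\partial_k w_{x_h}$, and extracts the coefficient relation (\ref{id3.18}) from the explicit $h^3$-term of the residual (\ref{id3.16}) together with (\ref{id2.13}) and the third-moment identities $\int y_iy_jy_l w^p\partial_kw\,dy$. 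You instead differentiate the necessary condition of Lemma~\ref{lem2.1} in $\lambda$ (legitimate: written in the $x$-variable it reads $\int\nabla m\,u_h^{p+1}dx=0$, so the $\lambda$-dependence of $x_h$ drops out and one gets $\int\nabla m(hy+x_h)v_h^pR_h\,dy=0$), insert the decomposition, and use the two critical-exponent cancellations, which I checked: with $p=1+\frac4N$ one has $\int|y|^2w_{x_0}^pR_0^{(0)}dy=\lambda^{-1}\big(\frac{1}{p-1}-\frac{N+2}{2(p+1)}\big)\int|y|^2w_{x_0}^{p+1}dy=0$ and $\int w_{x_0}^pR_0^{(0)}dy=\frac{1}{\lambda(p+1)}\int w_{x_0}^{p+1}dy$, so the third-derivative source disappears, the $\textbf{c}_h$-term enters through $\int y_kw_{x_h}^p\partial_jw_{x_h}dy=-\frac{\delta_{jk}}{p+1}\int w_{x_h}^{p+1}dy$, and the surviving relation is $\nabla^2m(x_0)\lim(h^{-1}\textbf{c}_h)=\lambda^{-1}\nabla^2m(x_0)\textbf{x}_1$, which with (\ref{id3.2}) is exactly (\ref{id3.9}). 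What your route buys: you only need the order $\mathrm{O}(h^3)$ of the residual $L_hR_0+h^2L_hR_1-v_h$, not its precise coefficient, and you avoid the third-moment bookkeeping; the price is that you must justify differentiating the Pohozaev-type identity under the integral (routine here thanks to (\ref{id1.8-1}) and (\ref{id2.2})), and you still need Lemma~\ref{lem3.1} and the prior bounds on $c_h^j$, $R_h^\perp$ exactly as the paper does.
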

\begin{proof}
By Lemma~\ref{lem2.2} and Theorem~\ref{thm2.7}, $u_h$ is unique
and non-degenerate. Consequently, $u_h$ is smooth on $\lambda$ and
$R_h$ satisfies~(\ref{eq3.4}). Now we decompose $R_h$ as
\begin{align}\notag
R_h=R_0+\sum\limits_{j=1}^Nc_h^j\partial_j
w_{x_h}+h^2R_1+R_h^\perp\,,
\end{align}
where $R_h^\perp\in K_{x_h}^\perp$. Then $R_h^\perp$ satisfies
\begin{align}\label{id3.10}
L_hR_h^\perp+\big[L_hR_0+h^2L_hR_1-v_h\big]+\sum_{j=1}^N
c_h^jL_h\partial_j w_{x_h}=0.
\end{align}
As for the proof of Theorem~\ref{thm2.7}, we have
\begin{equation}\label{id3.11}
\|R_h^\perp\|_{H^2}\leq
c\Big(\|L_hR_0+h^2L_hR_1-v_h\|_{L^2}+\sum_{j=1}^N|c_h^j|h^2\Big)\,.
\end{equation}
It is easy to check
\begin{equation}\label{eq3.8}
L_hR_0=v_h-\frac{h}{2\lambda}y\cdot\nabla m(hy+x_h)v_h^p.
\end{equation}
Hence by Lemma~\ref{lem2.4}, \ref{lem3.1}, (\ref{eq3.2})
and~(\ref{eq3.8}), we obtain
\begin{align}\label{id3.16}
&L_hR_0+h^2L_hR_1-v_h\nonumber\\
=&-\frac{h^3}{2\lambda}\Big[\sum\limits_{i,j=1}^N\partial_{ij}m(x_0)x_{1,i}y_j
+\frac{1}{2}\sum\limits_{i,j,k=1}\partial_{ijk}m(x_0)y_iy_jy_k\Big]w_{x_h}^p+\mathrm{O}(h^4)\,.
\end{align}
Consequently, by~(\ref{id3.11}),
\begin{equation}\label{id3.12}
\|R_h^\perp\|_{H^2}\leq c\Big(h^3+\sum_{j=1}^N|c_h^j|h^2\Big)\,.
\end{equation}

To estimate $c_h^j$'s, we may multiply (\ref{id3.10}) by $\partial_k
w_{x_h}$ and integrate over $\mathbb{R}^N$. Then
\begin{align}\label{id3.13}
&\int_{\mathbb{R}^N}(L_hR_h^\perp)\partial_k w_{x_h}dy
+\int_{\mathbb{R}^N}\Big[L_hR_0+h^2L_hR_1-v_h\Big]\partial_k
w_{x_h}dy\nonumber\\
&+\sum\limits_{j=1}^Nc_h^j\int_{\mathbb{R}^N}(L_h\partial_j
w_{x_h})\partial_k w_{x_h}dy=0.
\end{align}
Hence by (\ref{id2.13}), (\ref{id3.13}) may imply
\begin{equation}\label{id3.14}
|c_h^j|\leq\frac{C}{h^2}\left[\big|\int_{\mathbb{R}^N}(L_hR_h^\perp)\partial_k
w_{x_h}dy\big|
+\big|\int_{\mathbb{R}^N}\big[L_hR_0+h^2L_hR_1-v_h\big]\partial_k
w_{x_h}dy\big|\right].
\end{equation}
Using integration by parts and (\ref{id2.10}), we have
\begin{align}\label{id3.15}
\int_{\mathbb{R}^N}(L_hR_h^\perp)\partial_k w_{x_h}dy =
\int_{\mathbb{R}^N}R_h^\perp L_h\partial_k w_{x_h}dy
=\|R_h^\perp\|_{L^2}\mathrm{O}(h^2)\,.
\end{align}
Therefore, by (\ref{id3.16}), (\ref{id3.12}), (\ref{id3.14}) and
(\ref{id3.15}), we may obtain $|c_h^j|=\mathrm{O}(h)$. Consequently,
by~(\ref{id3.12}), $R_h^\perp=\mathrm{O}(h^3)$. Thus
by~(\ref{id3.15}),
\begin{equation}\label{id3.17}
\int_{\mathbb{R}^N}(L_hR_h^\perp)\partial_k
w_{x_h}dy=\mathrm{O}(h^5).
\end{equation}
Hence by (\ref{id2.13}), (\ref{id3.16}) and (\ref{id3.17}),
(\ref{id3.13}) gives
\begin{align}\label{id3.18}
&\frac{1}{p+1}\int_{\mathbb{R}^N}w_{x_0}^{p+1}dy\sum\limits_{j=1}^N\partial_{jk}m(x_0)\big(h^{-1}c_h^j\big)\nonumber\\
=&\frac{1}{2\lambda}\int_{\mathbb{R}^N}\Big[\sum\limits_{i,j=1}^N\partial_{ij}m(x_0)x_{1,i}y_j
+\frac{1}{2}\sum\limits_{i,j,l=1}^N\partial_{ijl}m(x_0)y_iy_jy_l\Big]w_{x_h}^p\partial_k
w_{x_h}dy+\mathrm{o}(1).
\end{align}
Using integration by parts, we obtain
\begin{align*}
\begin{cases}
&\int\limits_{\mathbb{R}^N}y_jw_{x_h}^p\partial_k w_{x_h}dy
=-\frac{\delta_{jk}}{p+1}\int\limits_{\mathbb{R}^N}w_{x_h}^{p+1}dy\,,\\
&\int\limits_{\mathbb{R}^N}y_iy_jy_lw_{x_h}^p\partial_k w_{x_h}dy
=-\frac{\delta_{ik}\delta_{jl}+\delta_{jk}\delta_{il}+\delta_{lk}\delta_{ij}}{N(p+1)}
\int\limits_{\mathbb{R}^N}|y|^2w_{x_h}^{p+1}dy\,,
\end{cases}
\end{align*}
where $\delta$ is the Kronecker symbol. Hence by (\ref{id3.18}),
$|c_h^j|=\mathrm{O}(h)$ for $j=1,\cdots,N$. Moreover,
by~(\ref{id3.2}), we obtain~(\ref{id3.9}) and complete the proof.
\end{proof}

Let us now compute $d\,''(\lambda)$. From~(\ref{id1.5}), it is easy
to get
\begin{equation*}
d\,'(\lambda)=\frac{1}{2}\int\limits_{\mathbb{R}^N}u_h^2dx
\end{equation*}
and hence
\begin{equation}\label{id3.6}
d\,''(\lambda)=\int\limits_{\mathbb{R}^N}u_h\frac{\partial
u_h}{\partial\lambda}dx=h^N\int\limits_{\mathbb{R}^N}v_hR_hdy\,.
\end{equation}

Using integration by parts and~(\ref{eq3.4}), we have
\begin{eqnarray}\label{eq3.7}
\int\limits_{\mathbb{R}^N}v_hR_0dy=\int\limits_{\mathbb{R}^N}v_h
\lambda^{-1}\big(\frac{1}{p-1}v_h+\frac{1}{2}y\cdot\nabla v_h\big)dy
=\lambda^{-1}\big(\frac{1}{p-1}-\frac{N}{4}\big)\int\limits_{\mathbb{R}^N}v_h^2dy=0\,,
\end{eqnarray}
since $p=1+\frac{4}{N}$. Hence, by~(\ref{id3.6}) and
Theorem~\ref{thm3.2}, we have
\begin{align*}
\frac{d\,''(\lambda)}{h^N}
=&\int\limits_{\mathbb{R}^N}v_h\Big[R_0+\sum\limits_{j=1}^Nc_h^j\partial_j w_{x_h}+h^2R_1+R_h^\perp\Big]dy\\
=&\int\limits_{\mathbb{R}^N}v_h\Big[\sum\limits_{j=1}^Nc_h^j\partial_j
w_{x_h}+h^2R_1+R_h^\perp\Big]dy
\quad\quad\big({\rm{because}\,\,}\int\limits_{\mathbb{R}^N}v_hR_0dy=0\big)\\
=&\int\limits_{\mathbb{R}^N}R_h\Big[\sum\limits_{j=1}^Nc_h^jL_h\partial_j
w_{x_h}
+h^2L_hR_1+L_hR_h^\perp\Big]dy\quad\quad\big({\rm{because}}\quad L_hR_h=v_h\big)\\
=&\int\limits_{\mathbb{R}^N}\Big[R_0+\sum\limits_{j=1}^Nc_h^j\partial_j
w_{x_h}+h^2R_1+R_h^\perp\Big]
\Big[\sum\limits_{j=1}^Nc_h^jL_h\partial_j
w_{x_h}+h^2L_hR_1+L_hR_h^\perp\Big]dy\,.
\end{align*}
Therefore, by~(\ref{id2.10}), (\ref{id3.10}) and
$c_h^j=\mathrm{O}(h)$,
\begin{align}\label{id3.20}
\frac{d\,''(\lambda)}{h^N}
=&\int\limits_{\mathbb{R}^N}R_0\Big[v_h-L_hR_0\Big]dy
+\sum\limits_{j,k=1}^Nc_h^jc_h^k\int\limits_{\mathbb{R}^N}\partial_k
w_{x_h}\big(L_h\partial_j w_{x_h}\big)dy\nonumber\\
&+h^4\int\limits_{\mathbb{R}^N}R_1\big(L_hR_1\big)dy+\mathrm{O}(h^5)\,.
\end{align}

For the integral
$\int\limits_{\mathbb{R}^N}R_0\Big[v_h-L_hR_0\Big]dy$, by
(\ref{eq3.8})and using integration by parts, we have
\begin{align*}
\int\limits_{\mathbb{R}^N}R_0\Big[v_h-L_hR_0\Big]dy
=&\int\limits_{\mathbb{R}^N}\lambda^{-1}\big(\frac{1}{p-1}v_h+\frac{1}{2}y\cdot\nabla
v_h\big)\Big[\frac{h}{2\lambda}y\cdot\nabla
m(hy+x_h)v_h^p\Big]dy\\
=&\frac{1}{2\lambda^2}\int\limits_{\mathbb{R}^N}\frac{N}{4(N+2)}\Big[hy\cdot\nabla
m(hy+x_h)-h^2\sum\limits_{i,j=1}^N\partial_{ij}m(hy+x_h)y_iy_j\Big]v_h^{p+1}dy.
\end{align*}
Note that by Lemma~\ref{lem2.4}, \ref{lem3.1} and
Theorem~\ref{thm3.2}, we have
\begin{align*}
&hy\cdot\nabla m(hy+x_h)-h^2\sum\limits_{i,j=1}^N\partial_{ij}m(hy+x_h)y_iy_j\\
=&hy\cdot\nabla
m(x_h)-\frac{h^3}{2}\sum\limits_{i,j,k=1}^N\partial_{ijk}m(x_h)y_iy_jy_k
-\frac{h^4}{3}\sum\limits_{i,j,k,l=1}^N\partial_{ijkl}m(x_h)y_iy_jy_ky_l+\mathrm{o}(h^4)\,,
\end{align*}
and
\begin{eqnarray}\label{id3.19}
v_h^p=w_{x_h}^p+h^2pw_{x_h}^{p-1}\phi_2+\mathrm{O}(h^3)\,.
\end{eqnarray}
Hence
\begin{align}\label{id3.5}
\int\limits_{\mathbb{R}^N}R_0\Big[v_h-L_hR_0\Big]dy
=&\frac{N}{8(N+2)}\lambda^{-2}\int\limits_{\mathbb{R}^N}
\Big[-\frac{h^4}{3}\sum\limits_{i,j,k,l=1}^N
\partial_{ijkl}m(x_h)y_iy_jy_ky_l\Big]w_{x_h}^{p+1}dy+\mathrm{o}(h^4)\nonumber\\
=&-\frac{h^4}{8(N+2)^2}\lambda^{-2}\int\limits_{\mathbb{R}^N}
|y|^4w_{x_h}^{p+1}dy\Delta^2m(x_0)+\mathrm{o}(h^4)\nonumber\\
=&-\frac{h^4}{8(N+2)^2}\lambda^{-3}m(x_0)^{-\frac{N}{2}-1}\int\limits_{\mathbb{R}^N}
|y|^4w^{p+1}dy\Delta^2m(x_0)+\mathrm{o}(h^4)\,.
\end{align}
Here we have used the following identities:
\begin{align*}
\begin{cases}
&\int\limits_{\mathbb{R}^N}y_iw_{x_h}^{p+1}dy=
\int\limits_{\mathbb{R}^N}y_iy_jy_kw_{x_h}^{p+1}dy=0\,,\quad\hbox{
for\,\,all
}\:i,j,k=1,\cdots,N\,;\\
&\int\limits_{\mathbb{R}^N}y_iy_jy_ky_lw_{x_h}^{p+1}dy=0\,,\quad
{\rm{if}}\
y_iy_jy_ky_l\ {\rm{is \ an\ odd\ function\ on\ one\ of\ its\ variate}}\,;\\
&\int\limits_{\mathbb{R}^N}y_i^4w_{x_h}^{p+1}dy=\frac{3}{N(N+2)}
\int\limits_{\mathbb{R}^N}|y|^4w_{x_h}^{p+1}dy\,,
\quad {\rm{for\,\,all}}\ i=1,\cdots,N\,; \\
&\int\limits_{\mathbb{R}^N}y_i^2y_j^2w_{x_h}^{p+1}dy=\frac{1}{N(N+2)}
\int\limits_{\mathbb{R}^N}|y|^4w_{x_h}^{p+1}dy\,,
\quad{\rm{for\,\,all}}\
 i\neq j\,,
 \end{cases}
\end{align*}
which can be proved by polar coordinates.

For the sum
$\sum\limits_{j,k=1}^Nc_h^jc_h^k\int\limits_{\mathbb{R}^N}\partial_k
w_{x_h}\big(L_h\partial_j w_{x_h}\big)dy$, we may use
(\ref{id2.13}) and (\ref{id3.9}) to get
\begin{align}\label{id3.8}
&\sum\limits_{j,k=1}^Nc_h^jc_h^k\int_{\mathbb{R}^N}\partial_k
w_{x_h}\left(L_h\partial_j
w_{x_h}\right)dy\nonumber\\
=&\frac{h^4}{p+1}\int_{\mathbb{R}^N}w_{x_h}^{p+1}dy\sum\limits_{j,k=1}^N(h^{-1}c_h^j)(h^{-1}c_h^k)\partial
_{jk}m(x_0)+\mathrm{o}(h^4)\nonumber\\
=&\frac{h^4}{8N(N+2)}\lambda^{-3}
m(x_0)^{-\frac{N}{2}-1}\frac{\big(\int\limits_{\mathbb{R}^N}|y|^2w^{p+1}dy\big)^2}
{\int\limits_{\mathbb{R}^N}w^{p+1}dy}\nabla(\Delta m)(x_0)
\cdot\big[\nabla^2m(x_0)\big]^{-1}\nabla(\Delta m)(x_0)+\mathrm{o}(h^4).\nonumber\\
\end{align}

For the integral
$h^4\int\limits_{\mathbb{R}^N}R_1\big(L_hR_1\big)dy$, by
(\ref{eq3.2}), it is obvious that $R_1(\lambda^{-\frac{1}{2}}y)$
satisfies
\begin{equation}
\Delta
R-R+pw^{p-1}R-\frac{1}{2}\lambda^{\frac{N}{4}-2}m(x_h)^{-\frac{N}{4}-1}
\sum\limits_{i,j=1}^N\partial_{ij}m(x_0)y_iy_jw^p=0\,.
\end{equation}
Hence
\begin{align}\label{id3.111}
&h^4\int\limits_{\mathbb{R}^N}R_1\big(L_hR_1\big)dy
=h^4\int\limits_{\mathbb{R}^N}R_1\big(L_{x_h}R_1\big)dy+\mathrm{O}(h^6)\nonumber\\
=&\frac{h^4}{4}\lambda^{-3}m(x_0)^{-\frac{N}{2}-2}\sum\limits_{i,j,k,l=1}^N
\partial_{ij}m(x_0)\partial_{kl}m(x_0)\int\limits_{\mathbb{R}^N}y_iy_iw^p
L_0^{-1}\big(y_ky_lw^p\big)dy+\mathrm{O}(h^6)\nonumber\\
=&\frac{h^4}{4N^2}\lambda^{-3}m(x_0)^{-\frac{N}{2}-2}|\Delta
m(x_0)|^2\int\limits_{\mathbb{R}^N}r^2w^pL_0^{-1}(r^2w^p)dy\nonumber\\
&+\frac{h^4}{2N(N+2)}\lambda^{-3}m(x_0)^{-\frac{N}{2}-2}\|\nabla^2m(x_0)\|_2^2
\int\limits_{\mathbb{R}^N}r^2w^p\Phi_0(r)dy\nonumber\\
&-\frac{h^4}{2N^2(N+2)}\lambda^{-3}m(x_0)^{-\frac{N}{2}-2}|\Delta
m(x_0)|^2\int\limits_{\mathbb{R}^N}r^2w^p\Phi_0(r)dy+\mathrm{O}(h^6).
\end{align}
Here $\|\nabla^2m(x_0)\|_2^2=\sum\limits_{i,j=1}^Nm_{ij}^2(x_0)$ and
we have used the following identities:
\begin{align}
&\int\limits_{\mathbb{R}^N}
y^2_{N}w^pL_0^{-1}(y^2_Nw^p)dy=\frac{1}{N^2}\int\limits_{\mathbb{R}^N}r^2w^pL_0^{-1}(r^2w^p)dy
+\frac{2(N-1)}{N^2(N+2)}\int\limits_{\mathbb{R}^N}
r^2w^p\Phi_0(r)dy\,,\label{id3.21-1}\\
&\int\limits_{\mathbb{R}^N}
y^2_{N-1}w^pL_0^{-1}(y^2_Nw^p)dy=\frac{1}{N^2}\int\limits_{\mathbb{R}^N}r^2w^pL_0^{-1}(r^2w^p)dy
-\frac{2}{N^2(N+2)}\int\limits_{\mathbb{R}^N}r^2w^p\Phi_0(r)dy\,,\label{id3.21-2}\\
&\int\limits_{\mathbb{R}^N}
y_{N-1}y_Nw^pL_0^{-1}(y_{N-1}y_Nw^p)dy=\frac{1}{N(N+2)}
\int\limits_{\mathbb{R}^N}r^2w^p\Phi_0(r)dy\,,\label{id3.21-3}
\end{align}
where $\Phi_0$ satisfies (\ref{id1.7}), which we have proved in
Appendix B.

Therefore, combining (\ref{id3.20}), (\ref{id3.5}), (\ref{id3.8})
and (\ref{id3.111}), we obtain
\begin{align*}
&\frac{d\,''(\lambda)}{h^N}+\mathrm{o}(h^4)\\
=&-\frac{h^4}{8(N+2)^2}\lambda^{-3}m(x_0)^{-\frac{N}{2}-1}
\int\limits_{\mathbb{R}^N}|y|^4w^{p+1}dy\Delta^2m(x_0)\\
&+\frac{h^4}{8N(N+2)}\lambda^{-3}m(x_0)^{-\frac{N}{2}-1}
\frac{\big(\int\limits_{\mathbb{R}^N}|y|^2w^{p+1}dy\big)^2}
{\int\limits_{\mathbb{R}^N}w^{p+1}dy}\nabla(\Delta m)(x_0)
\cdot\big[\nabla^2m(x_0)\big]^{-1}\nabla(\Delta
m)(x_0)\\
&+\frac{h^4}{4N^2}\lambda^{-3}m(x_0)^{-\frac{N}{2}-2}|\Delta
m(x_0)|^2
\int\limits_{\mathbb{R}^N}|y|^2w^pL_0^{-1}\big(|y|^2w^p\big)dy\\
&+\frac{h^4}{2N^2(N+2)}\lambda^{-3}m(x_0)^{-\frac{N}{2}-2}\Big[N\|\nabla^2m(x_0)\|_2^2-|\Delta
m(x_0)|^2\Big] \int\limits_{\mathbb{R}^N}|y|^2w^p\Phi_0(|y|)dy.
\end{align*}
Consequently,
\begin{align*}
\frac{8(N+2)^2m(x_0)^{\frac{N}{2}+2}\lambda^3}{h^{N+4}
\int\limits_{\mathbb{R}^N}|y|^4w^{p+1}dy}d\,''(\lambda)
=&C_{N,1}|\Delta
m(x_0)|^2+C_{N,2}\big(N\|\nabla^2m(x_0)\|_2^2-|\Delta
m(x_0)|^2\big)\\
&+C_{N,3}m(x_0)\Big[\nabla(\Delta
m)(x_0)\cdot\big[\nabla^2m(x_0)\big]^{-1}\nabla(\Delta
m)(x_0)\Big]\\
&-m(x_0)\Delta^2 m(x_0)+\mathrm{o}(1)\,,
\end{align*}
where $C_{N,1},C_{N,2},C_{N,3}$ are constants given by
(\ref{id1.9-1}), (\ref{id1.9-2}), (\ref{id1.9-3}), respectively.

Now we may prove Theorem~\ref{thm1.1} as follows: Suppose that
$x_0$ is a non-degenerate local maximum point of the function
$m(x)$, then the Hessian matrix $\nabla^2m(x_0)$ of m at $x_0$ is
negative definite. By Theorem~\ref{thm2.7}, we have $n(L_h)=1$. On
the other hand, we have $p(d\,'')=1$. Thus $\psi_h$ is orbital
stable by the orbital stability criteria of
\cite{[GS1]}-\cite{[GS2]}. For orbital instability, we denote the
number of positive eigenvalues of the Hessian matrix
$\nabla^2m(x_0)$ by $n$. Then by Theorem~\ref{thm2.7}, we obtain
$n(L_h)=n+1$. On the other hand, we have $p(d\,'')=1$. Thus by the
instability criteria of \cite{[GS2]}, we conclude that $\psi_h$ is
orbitally unstable if $n$ is odd. This may complete the proof of
Theorem~\ref{thm1.1}.

\section{Proof of Theorem \ref{thm1.2}-\ref{thm1.4}}

\ \ \ \ In this section, we may generalize the argument of Section
2 and 3 to prove Theorem~\ref{thm1.2}-\ref{thm1.4}. Let
$v_h(y):=u_h(hy+x_h)$, where $u_h$ is a single-spike bound state
of (\ref{id1.3}) with a unique local maximum point at~$x_h$. Then
$v_h$ satisfies
\begin{equation}\label{eq4.1}
\Delta\,v_h-\Big[V(hy+x_h)+\lambda\Big]v_h
+m(hy+x_h)v_h^p=0\quad\hbox{ in }\:\mathbb{R}^N\,.
\end{equation}

Suppose~(\ref{id2.2}) hold. By~(\ref{id2.3}) and~\cite{[WXZ]}, we
have
\begin{equation}\label{eq4.2}
m(x_0)\nabla V(x_0)=\frac{N}{2}\left[V(x_0)+\lambda\right]\nabla
m(x_0)\,,
\end{equation}
so $x_0$ may depend on $\lambda$. Note that by (\ref{eq4.2}),
$\nabla m(x_0)=0$ if and only if $\nabla V(x_0)=0.$ By direct
computation on the function $G$,
\begin{align*}
\partial_{ij}G(x_0)=m(x_0)^{-\frac{N}{2}-1}&\Big[m(x_0)\partial_{ij}V(x_0)
+(1-\frac{N}{2})\partial_iV(x_0)\partial_jm(x_0)\\
&-\frac{N}{2}\left[V(x_0) +\lambda\right]\partial_{ij}m(x_0)\Big]\,.
\end{align*}
In particular, if $\nabla m(x_0)=0$, then
\begin{equation*}
\nabla^2G(x_0)=m(x_0)^{-\frac{N}{2}-1}\Big[m(x_0)\nabla^2V(x_0)-\frac{N}{2}\big[V(x_0)
+\lambda\big]\nabla^2m(x_0)\Big]\,.
\end{equation*}

Using the identity~(\ref{id2.3}), one may follow the arguments of
Lemma~\ref{lem2.2}-\ref{lem2.4} to get the uniqueness of $u_h$ and
\begin{align}
x_h=&\,x_0+\mathrm{o}(h)\label{id4.2}\,;\\
v_h=&\,w_{x_h}+h\phi_1+h^2\phi_2+\mathrm{o}(h^2)\label{id4.3}\,,
\end{align}
where $\phi_1$ and $\phi_2$ satisfy
$\nabla\phi_1(0)=\nabla\phi_2(0)=0\,,$
\begin{equation}\label{eq4.6}
\Delta\phi_1-\left[V(x_0)+\lambda\right]\phi_1+m(x_0)pw_{x_0}^{p-1}\phi_1
-y\cdot\nabla V(x_0)w_{x_0}+y\cdot\nabla m(x_0)w_{x_0}^p=0\,,
\end{equation}
and
\begin{align}\label{eq4.7}
&\Delta\phi_2-\big[V(x_h)+\lambda\big]\phi_2+m(x_h)pw_{x_h}^{p-1}\phi_2
-y\cdot\nabla
V(x_0)\phi_1-\frac{1}{2}\sum\limits_{i,j=1}^N\partial_{ij}V(x_0)y_iy_jw_{x_h}\nonumber\\
&+y\cdot\nabla m(x_0)pw_{x_h}^{p-1}\phi_1
+\frac{1}{2}\sum\limits_{i,j=1}^N\partial_{ij}m(x_0)y_iy_jw_{x_h}^p
+\frac{1}{2}m(x_0)p(p-1)w_{x_h}^{p-2}\phi_1^2=0\,.
\end{align}
Here we have used the hypothesis that $x_0$ is a non-degenerate
point of the function~$G$. And the only difference in the proof is
that we need to estimate the term
\begin{equation*}
\frac{1}{p+1}\nabla m(x_0)\int\limits_{\mathbb{R}^N}v_h^{p+1}dy
-\frac{1}{2}\nabla V(x_0)\int\limits_{\mathbb{R}^N}v_h^2dy\,,
\end{equation*}
to estimate which one may use the following Pohozaev identity
(cf.~\cite{[P]})
\begin{align*}
&\int\limits_{\mathbb{R}^N}\Big[\frac{2}{N+2}m(hy+x_h)+\frac{h}{p+1}y\cdot\nabla
m(hy+x_h)\Big]v_h^{p+1}dy\nonumber\\
=&\int\limits_{\mathbb{R}^N}\Big[V(hy+x_h)+\lambda+\frac{h}{2}y\cdot\nabla
V(hy+x_h)\Big]v_h^2dy\,.
\end{align*}

For the small eigenvalue estimates of $L_h$, one may generalize
the idea of Theorem~\ref{thm2.7} to get
\begin{thm}\label{thm4.1}
For $h$ small enough, the eigenvalue problem
\begin{align}
L_h\varphi_h =\mu_h\varphi_h
\end{align}
has exactly $N$ eigenvalues $\mu_h^j\,,  j=1,\cdots N$, in the interval $[\frac{1}{2}\mu_1,\frac{1}{2}\mu_{N+2}]$, which satisfy
and \begin{align}\label{id2.14} \frac{\mu_h^j}{h^2}\rightarrow
c_0\nu_{j}\,,\quad\hbox{ as }\:h\to 0\,,\quad\hbox{ for }\:
j=1,\cdots N\,,
\end{align} where $\mu_1$ and $\mu_{N+2}$ are defined
Lemma~\ref{lem2.6}, $\nu_j$'s are the eigenvalues of the Hessian
matrix $\nabla^2 G(x_0)$, and
$c_0=-\frac{m(x_0)^{N/2}}{V(x_0)+\lambda}=-G(x_0)^{-1}$ is a negative
constant. Furthermore, the corresponding eigenfunctions
$\varphi_h^j$'s satisfy
\begin{equation}\label{410}
\varphi_h^j=\sum\limits_{i=1}^N\big[a_{ij}+\mathrm{o}(1)\big]\big(\partial_{i}
w_{x_h}+h\psi_i\big)+\mathrm{O}(h^2)\,,\quad j=1,\cdots,N\,,
\end{equation}
where each $\psi_i$ is the solution of
\begin{align}\label{eq4.3}
&\Delta\psi_i-\Big[V(x_h)+\lambda\Big]\psi_i+m(x_h)pw_{x_h}^{p-1}\psi_i\nonumber\\
&+\Big[-y\cdot\nabla V(x_h)+y\cdot\nabla m(x_h)pw_{x_h}^{p-1}
+m(x_h)p(p-1)w_{x_h}^{p-2}\phi_1\Big]\partial_iw_{x_h}=0\,,
\end{align}
and $\textbf{a}_j=(a_{1j},\cdots,a_{Nj})^T$ is the eigenvector
corresponding to $\nu_j$, namely,
\begin{equation}
\nabla^2 G(x_0)\textbf{a}_j=\nu_j\textbf{a}_j\,.
\end{equation}
\end{thm}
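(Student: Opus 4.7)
My plan is to mimic the argument of Theorem~\ref{thm2.7}, but use a refined ansatz for the eigenfunctions that absorbs the first-order perturbation coming from $\nabla V(x_0)$ and $\nabla m(x_0)$ (which need no longer vanish individually once $V$ is turned on). Since $L_h$ converges to $L_{x_0}$ in the strong resolvent sense and, by Lemma~\ref{lem2.6}, $L_{x_0}$ has an $N$-dimensional kernel spanned by $\{\partial_j w_{x_0}\}_{j=1}^N$, standard perturbation theory yields exactly $N$ eigenvalues $\mu_h^1,\ldots,\mu_h^N$ of $L_h$ lying in $[\tfrac{1}{2}\mu_1,\tfrac{1}{2}\mu_{N+2}]$ with $\mu_h^j\to 0$ as $h\to0$, and the corresponding eigenfunctions concentrate on $\mathrm{span}\{\partial_j w_{x_h}\}$.

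The first step is to solve for the correction functions $\psi_i$ via (\ref{eq4.3}) in $K_{x_h}^\perp$. Solvability amounts to showing that the inhomogeneous term
\[
F_i := \bigl[-y\cdot\nabla V(x_h) + y\cdot\nabla m(x_h)\,pw_{x_h}^{p-1} + m(x_h)\,p(p-1)w_{x_h}^{p-2}\phi_1\bigr]\partial_i w_{x_h}
\]
is orthogonal to $\ker L_{x_h} = \mathrm{span}\{\partial_j w_{x_h}\}$, which follows from the critical point relation~(\ref{eq4.2}) characterizing $\nabla G(x_0)=0$, together with parity of the integrands. By construction, $L_h(\partial_i w_{x_h}+h\psi_i)$ has no $O(h)$ contribution, and the leading non-trivial term is of order $h^2$.

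Next, I write $\varphi_h = \sum_{j=1}^N a_h^j\bigl(\partial_j w_{x_h}+h\psi_j\bigr)+\varphi_h^\perp$ with $\varphi_h^\perp\in K_{x_h}^\perp$ and $\|\varphi_h\|_{L^2}=1$, substitute into $L_h\varphi_h=\mu_h\varphi_h$, and invoke Lemma~\ref{lem2.5} to get $\|\varphi_h^\perp\|_{H^2} \le C(h^2+|\mu_h|)\,|\mathbf{a}_h|$, exactly as in Theorem~\ref{thm2.7}. Testing the equation against $\partial_k w_{x_h}$ and using integration by parts reduces the problem to an $N\times N$ linear system for $\mathbf{a}_h=(a_h^1,\ldots,a_h^N)^T$ with matrix entries
\[
M_h^{jk} = \frac{1}{h^2}\int_{\mathbb{R}^N}\bigl[L_h(\partial_j w_{x_h}+h\psi_j)\bigr]\partial_k w_{x_h}\,dy + o(1).
\]

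The main obstacle is to show that $M_h^{jk}\to c_0\,\partial_{jk}G(x_0)\int_{\mathbb{R}^N}(\partial_k w_{x_0})^2\,dy$ with $c_0=-G(x_0)^{-1}$. Expanding $V(hy+x_h)-V(x_h)$ and $m(hy+x_h)-m(x_h)$ to second order, using the expansion (\ref{id4.3}) of $v_h$, and exploiting (\ref{eq4.6}) for $\phi_1$, the contributions naturally split into terms involving $\partial_{ij}V(x_0)$, $\partial_{ij}m(x_0)$, and products of first derivatives of $V$ and $m$. The single-derivative cross-terms are collected and converted into second derivatives of $m$ through (\ref{eq4.2}); the net effect is that the combination which appears is precisely $m(x_h)\partial_{jk}V(x_0) - \tfrac{N}{2}[V(x_h)+\lambda]\partial_{jk}m(x_0)$ up to the factor $m(x_h)^{-N/2-1}$, which is exactly $m(x_0)^{N/2+1}\partial_{jk}G(x_0)$. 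Dividing by $\int (\partial_k w_{x_h})^2\,dy$ and using the Pohozaev relation $\int |\nabla w_{x_0}|^2 = \tfrac{N}{N+2}m(x_0)\int w_{x_0}^{p+1}$ produces the stated constant $c_0=-G(x_0)^{-1}$. Since $\|\varphi_h\|_{L^2}=1$ and $\|\varphi_h^\perp\|_{H^2}$ is small, $\mathbf{a}_h$ is bounded and bounded away from zero; passing to a subsequence, $\mathbf{a}_h\to\mathbf{a}_j$ an eigenvector of $\nabla^2G(x_0)$ with eigenvalue $\nu_j$, and $\mu_h^j/h^2\to c_0\nu_j$. The asymptotic formula~(\ref{410}) then follows directly from the decomposition, and a perturbation argument as in page~1473--1474 of~\cite{[W]} confirms that these are the only $N$ small eigenvalues of $L_h$.
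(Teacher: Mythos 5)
Your overall strategy is the paper's own (Remark~5 and Appendix~C): correct the approximate kernel to $\partial_j w_{x_h}+h\psi_j$ so that the $\mathrm{O}(h)$ part of $L_h\partial_j w_{x_h}$ in (\ref{id4.7}) is cancelled, decompose $\varphi_h$ along these functions plus $\varphi_h^\perp\in K_{x_h}^\perp$, invert $L_{x_h}$ on $K_{x_h}^\perp$ via Lemma~\ref{lem2.5}, and reduce to an $N\times N$ matrix whose $h^{2}$-limit must be identified; your use of the Pohozaev relation to extract $c_0=-G(x_0)^{-1}$ is also the paper's. The gap is in the identification of that limiting matrix, i.e.\ in the analogue (\ref{eq4.4}) of (\ref{id2.13}), which is the entire new content of Theorem~\ref{thm4.1}. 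You claim the first-derivative cross terms are ``converted into second derivatives of $m$ through (\ref{eq4.2})'' and that the limit involves only $m(x_0)\partial_{jk}V(x_0)-\tfrac N2\big[V(x_0)+\lambda\big]\partial_{jk}m(x_0)$, identified with $m(x_0)^{\frac N2+1}\partial_{jk}G(x_0)$. That identification is false in general: at a critical point of $G$ one has $\partial_{jk}G(x_0)=m(x_0)^{-\frac N2-1}\big[m(x_0)\partial_{jk}V(x_0)+(1-\tfrac N2)\partial_jV(x_0)\partial_km(x_0)-\tfrac N2[V(x_0)+\lambda]\partial_{jk}m(x_0)\big]$, and the rank-one term $(1-\tfrac N2)\partial_jV(x_0)\partial_km(x_0)$ vanishes only if $N=2$ or $\nabla V(x_0)=0$. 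Relation (\ref{eq4.2}) ties $\nabla V(x_0)$ to $\nabla m(x_0)$; it cannot turn products of first derivatives into second derivatives. In the actual computation (see (\ref{id7.9})) these products survive, e.g.\ the terms $\frac1{p-1}m(x_0)^{-1}\partial_jm(x_0)\partial_kV(x_0)\int w_{x_0}^2\,dy$ and $\big(\frac1{p-1}-\frac12\frac N{p+1}\big)[V(x_0)+\lambda]^{-1}\partial_jV(x_0)\partial_km(x_0)\int w_{x_0}^{p+1}\,dy$, and they are exactly what completes the full Hessian of $G$. As written, your argument would output $m\nabla^2V-\tfrac N2(V+\lambda)\nabla^2m$ rather than $m^{\frac N2+1}\nabla^2G(x_0)$, and the discrepancy occurs precisely in the regime where the theorem is applied in the proof of Theorem~\ref{thm1.2} ($\nabla V(x_0)\neq0$, with $N=1,3$ admitted).

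Relatedly, establishing (\ref{eq4.4}) is not a matter of Taylor expansion plus bookkeeping: the paper's Appendix~C tests the $\phi_2$-equation (\ref{eq4.7}) against $\partial_{jk}w_{x_h}$, uses the differentiated $\phi_1$-equation, the identities $L_{x_0}w_{x_0}=(p-1)m(x_0)w_{x_0}^p$ and $L_{x_0}\big(\frac1{p-1}w_{x_0}+\frac12 y\cdot\nabla w_{x_0}\big)=[V(x_0)+\lambda]w_{x_0}$, and the integral identity $[V(x_0)+\lambda]\int_{\mathbb{R}^N}w_{x_0}^2\,dy=\frac2{N+2}m(x_0)\int_{\mathbb{R}^N}w_{x_0}^{p+1}\,dy$ to reassemble everything into $\partial_{jk}G(x_0)$. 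None of this machinery appears in your sketch, so the key step is both unproved and, in the form you state it, incorrect; by contrast, your solvability remark for $\psi_i$ (parity of the forcing against $\ker L_{x_h}$) and the remaining reduction are sound and coincide with the paper.
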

\noindent {\bf Remark 5:} (1) To prove it, one may follow the arguments in
the proof of Theorem~\ref{thm2.7} and use the following identity
\begin{equation}\label{eq4.4}
\int\limits_{\mathbb{R}^N}\partial_{k} w_{x_h}L_h\big(\partial_{j}
w_{x_h}+h\psi_j\big)dy=-\frac{h^2}{N+2}\int\limits_{\mathbb{R}^N}w^{p+1}dy
\partial_{jk}G(x_0)+\mathrm{o}(h^2),
\end{equation}
to replace (\ref{id2.13}) (see Appendix C). The main difference
between Theorem~\ref{thm2.7} and~\ref{thm4.1} is that (\ref{410})
has the solution $\psi_i$ of (\ref{eq4.3}) which comes from
\begin{align}\label{id4.7}
L_h\partial_iw_{x_h}=h\Big[&-y\cdot\nabla V(x_0)+y\cdot\nabla
m(x_0)pw_{x_h}^{p-1}\nonumber\\
&+m(x_0)p(p-1)w_{x_h}^{p-2}\phi_1\Big]\partial_iw_{x_h}+\mathrm{O}(h^2).
\end{align}

(2) Let $n$ be the number of negative eigenvalues of the matrix $\delta^2G(x_0)$, then similar to the Remark 4(3), the number of positive eigenvalues of $L_h$ equals $n+1$, i.e., $n(L_h)=n+1$.

Since the potential function $V$ is nonzero, then $x_0$ may depend
on $\lambda$ and the asymptotic expansion of $d\,''(\lambda)$
becomes more complicated. Indeed, when $m\equiv1$ and $\Delta
V(x_0)\neq0$, the result in~\cite{[lw5]} shows that the effect of
potential function $V$ on $d\,''(\lambda)$ is $\mathrm{O}(h^2)$. On
the other hand, when $V\equiv0$ and condition~(\ref{con1.1}) holds,
the effect of $m$ on $d\,''(\lambda)$ is $\mathrm{O}(h^4)$ (see
Section~3). Generally, when both $m$ and $V$ are not constant, we
may show
\begin{enumerate}[(I)]
\item The effect of $V$ and $m$ on $d\,''(\lambda)$ is $\mathrm{O}(1)$
if $\nabla V(x_0)\neq0$ (see Theorem~\ref{thm1.2});

\item The effect of $V$ and $m$ on $d\,''(\lambda)$ is
$\mathrm{O}(h^2)$ if $\nabla V(x_0)=0$ and $\Delta V(x_0)\neq0$ (see
Theorem~\ref{thm1.3});

\item The effect of $V$ and $m$ on $d\,''(\lambda)$ is
$\mathrm{O}(h^4)$ if $\nabla V(x_0)=0\,,\Delta V(x_0)=0$ and some
local condition hold (see Theorem~\ref{thm1.4}).
\end{enumerate}

\noindent Now we divide three cases to prove these results.\\\\
\textbf{Case I: $\nabla V(x_0)\neq0$.}\\

Let $R_h:=\frac{\partial u_h}{\partial\lambda}(hy+x_h)$.
Then~(\ref{eq3.4}) and (\ref{eq3.7}) hold. Hence one may apply the
idea of Theorem~\ref{thm3.2} to get
\begin{align}\label{id4.4}
R_h=\sum\limits_{i=1}^Nc_h^i\big(\partial_{i}
w_{x_h}+h\psi_i\big)+R_0+R_h^\perp\,,
\end{align}
where as $h\rightarrow0$, $\textbf{c}_h=(c_h^1,\cdots,c_h^N)$
satisfies
\begin{equation}\label{id4.5}
\nabla^2G(x_0)(h\textbf{c}_h)\rightarrow-\frac{N}{2}m(x_0)^{-\frac{N}{2}-1}\nabla
m(x_0)\,,
\end{equation}
and
\begin{equation}\label{id4.6}
R_0=\big[V(x_h)+\lambda\big]^{-1}\big(\frac{1}{p-1}v_h+\frac{1}{2}y\cdot\nabla
v_h\big)\,,\,\,\,R_h^\perp=\mathrm{O}(h)\,.
\end{equation}
Thus
\begin{align*}
\frac{d\,''(\lambda)}{h^N}=&\int\limits_{\mathbb{R}^N}v_hR_hdy
=\int\limits_{\mathbb{R}^N}v_h\Big[\sum\limits_{i=1}^Nc_h^i\big(\partial_{i}
w_{x_h}+h\psi_i\big)+R_0+R_h^\perp\Big]dy\\
=&\int\limits_{\mathbb{R}^N}v_h\sum\limits_{i=1}^Nc_h^i\big(\partial_{i}
w_{x_h}+h\psi_i\big)dy+\mathrm{O}(h)\quad\quad\Big({\rm{because}}\,
\int\limits_{\mathbb{R}^N}v_hR_0dy=0\Big)\\
=&\int\limits_{\mathbb{R}^N}R_h\sum\limits_{i=1}^Nc_h^iL_h\big(\partial_{i}
w_{x_h}+h\psi_i\big)dy+\mathrm{O}(h)\quad\quad\Big({\rm{because}}\,
L_hR_h=v_h\Big)\\
=&\int\limits_{\mathbb{R}^N}\Big[\sum\limits_{k=1}^Nc_h^k\big(\partial_{k}
w_{x_h}+h\psi_k\big)+R_0+R_h^\perp\Big]\sum\limits_{i=1}^Nc_h^iL_h\big(\partial_{i}
w_{x_h}+h\psi_i\big)dy+\mathrm{O}(h)\,.
\end{align*}
Therefore, by (\ref{eq4.3}), (\ref{id4.7}), (\ref{eq4.4}),
(\ref{id4.5}) and (\ref{id4.6}), we obtain
\begin{equation}\label{id4.1}
\frac{d\,''(\lambda)}{h^N}=-\frac{N^2}{4(N+2)}m(x_0)^{-N-2}\int\limits_{\mathbb{R}^N}w^{p+1}dy
\nabla m(x_0)\cdot\big[\nabla^2G(x_0)\big]^{-1}\nabla
m(x_0)+\mathrm{O}(h)\,.
\end{equation}
Consequently, if $x_0$ is a non-degenerate local minimum point of
$G$, then the Hessian matrix $\nabla^2G(x_0)$ is positive
definite. By Theorem~\ref{thm4.1}, we have $n(L_h)=1$. On the
other hand, by~(\ref{id4.1}), we have $p(d\,'')=0$. Thus we
complete the proof of Theorem~{\ref{thm1.2}} by the orbital
instability criteria
of~\cite{[GS1]}-\cite{[GS2]}.\\\\
\textbf{Case II: $\nabla V(x_0)=0$ and $\Delta V(x_0)\neq0$.}\\

Firstly, note that in this case, $\phi_1\equiv0$ and
$\psi_i\equiv0$. Then one may apply the idea of Lemma~\ref{lem3.1}
and Theorem~\ref{thm3.2} to obtain
\begin{align}
x_h=&x_0+h^2\textbf{x}_1+\mathrm{O}(h^3)\,;\label{id4.10}\\
R_h=&R_0+\sum\limits_{j=1}^Nc_h^j\partial_j
w_{x_h}+h^2R_1+R_h^\perp\label{id4.8}\,,
\end{align}
where $\textbf{x}_1\in\mathbb{R}^N$ satisfies
\begin{align}\label{id4.15}
\nabla^2G(x_0)\textbf{x}_1=&
-\frac{N+2}{4N}\big[V(x_0)+\lambda\big]^{-1}m(x_0)^{-\frac{N}{2}}
\left(\frac{\int\limits_{\mathbb{R}^N}|y|^2w^2dy}
{\int\limits_{\mathbb{R}^N}w^{p+1}dy}\right)\nabla(\Delta V)(x_0)\nonumber\\
&+\frac{1}{4}m(x_0)^{-\frac{N}{2}-1}\left(\frac{\int\limits_{\mathbb{R}^N}|y|^2w^{p+1}dy}
{\int\limits_{\mathbb{R}^N}w^{p+1}dy}\right) \nabla(\Delta
m)(x_0)\,,
\end{align}
$R_1$ satisfies
\begin{align}\label{eq4.5}
&\Delta R_1-\big[V(x_h)+\lambda\big]
R_1+m(x_h)pw_{x_h}^{p-1}R_1\nonumber\\
&+\big[V(x_h)+\lambda\big]^{-1}
\Big[\sum\limits_{i,j=1}^N\partial_{ij}V(x_0)y_iy_jw_{x_h}-\frac{1}{2}
\sum\limits_{i,j=1}^N\partial_{ij}m(x_0)y_iy_jw_{x_h}^p\Big]=0\,,
\end{align}
$R_h^\perp=\mathrm{O}(h^3)$ and $c_h^j=\mathrm{O}(h)$ for
$j=1,\cdots,N$. Moreover, $\textbf{c}_h:=(c_h^1,\cdots,c_h^N)$
satisfies
\begin{equation}\label{id4.17}
\nabla^2G(x_0)\big(h^{-1}\textbf{c}_h\big)=\textbf{c}_0+\mathrm{o}(1)\,,
\end{equation}
where
\begin{align}\label{id4.14}
\textbf{c}_0
=&-\big[V(x_0)+\lambda\big]^{-1}m(x_0)^{-\frac{N}{2}}\nabla^2V(x_0)
\textbf{x}_1\nonumber\\
&-\frac{N+2}{2N}\big[V(x_0)+\lambda\big]^{-2}m(x_0)^{-\frac{N}{2}}
\left(\frac{\int\limits_{\mathbb{R}^N}|y|^2w^2dy}
{\int\limits_{\mathbb{R}^N}w^{p+1}dy}\right)\nabla(\Delta V)(x_0)\nonumber\\
&+\frac{1}{4}\big[V(x_0)+\lambda\big]^{-1}m(x_0)^{-\frac{N}{2}-1}
\left(\frac{\int\limits_{\mathbb{R}^N}|y|^2w^{p+1}dy}
{\int\limits_{\mathbb{R}^N}w^{p+1}dy}\right)\nabla(\Delta m)(x_0).
\end{align}
Hence
\begin{align*}
\frac{d\,''(\lambda)}{h^N}=&\int\limits_{\mathbb{R}^N}v_hR_hdy
=\int\limits_{\mathbb{R}^N}v_h\Big[R_0+\sum\limits_{j=1}^Nc_h^j\partial_j w_{x_h}+h^2R_1+R_h^\perp\Big]dy\\
=&\int\limits_{\mathbb{R}^N}v_h\Big[\sum\limits_{j=1}^Nc_h^j\partial_j
w_{x_h}+h^2R_1+R_h^\perp\Big]dy
\quad\Big({\rm{because}}\,\int\limits_{\mathbb{R}^N}v_hR_0dy=0\Big)\\
=&\int\limits_{\mathbb{R}^N}R_h\Big[\sum\limits_{j=1}^Nc_h^jL_h\partial_j
w_{x_h}
+h^2L_hR_1+L_hR_h^\perp\Big]dy\quad\Big({\rm{because}}\,L_hR_h=v_h\Big)\\
=&\int\limits_{\mathbb{R}^N}\Big[R_0+\sum\limits_{k=1}^Nc_h^k\partial_k
w_{x_h}+h^2R_1+R_h^\perp\Big]
\Big[\sum\limits_{j=1}^Nc_h^jL_h\partial_j
w_{x_h}+h^2L_hR_1+L_hR_h^\perp\Big]dy\,.
\end{align*}
Therefore, by (\ref{eq4.3}), (\ref{id4.7}) and (\ref{id4.8}), we
obtain
\begin{align}\label{id4.9}
\frac{d\,''(\lambda)}{h^N}
=&\int\limits_{\mathbb{R}^N}R_0\big[v_h-L_hR_0\big]dy
+\sum\limits_{j,k=1}^Nc_h^jc_h^k\int\limits_{\mathbb{R}^N}\partial_k
w_{x_h}L_h\big(\partial_j w_{x_h}\big)dy\nonumber\\
&+h^4\int\limits_{\mathbb{R}^N}R_1\big(L_hR_1\big)dy+\mathrm{O}(h^5)\,.
\end{align}

For the integral
$\int\limits_{\mathbb{R}^N}R_0\big[v_h-L_hR_0\big]dy$, by direct
computation, we have
\begin{align}\label{id4.16}
v_h-L_hR_0=&-\big[V(x_h)+\lambda\big]^{-1}\Big[V(hy+x_h)-V(x_h)+\frac{h}{2}y\cdot\nabla
V(hy+x_h)\Big]v_h\nonumber\\
&+\frac{h}{2}\big[V(x_h)+\lambda\big]^{-1}y\cdot\nabla m(hy+x_h)v_h^p.
\end{align}
Thus by (\ref{id4.3}), (\ref{id4.10}) and (\ref{eq2.2-1}), we obtain
\begin{equation}\label{id4.12}
\int\limits_{\mathbb{R}^N}R_0\big[v_h-L_hR_0\big]dy
=\frac{h^2}{2N}\big[V(x_0)+\lambda\big]^{-3}m(x_0)^{-\frac{N}{2}}\int\limits_{\mathbb{R}^N}|y|^2w^2dy
\Delta V(x_0)+\mathrm{O}(h^4)\,.
\end{equation}

For the sum
$\sum\limits_{j,k=1}^Nc_h^jc_h^k\int\limits_{\mathbb{R}^N}\partial_k
w_{x_h}\big(L_h\partial_j w_{x_h}\big)dy$, by (\ref{eq4.3}),
(\ref{id4.7}) and $c_h^j=\mathrm{O}(h)$ for $j=1,\cdots,N$, we have
\begin{equation}\label{id4.11}
\sum\limits_{j,k=1}^Nc_h^jc_h^k\int\limits_{\mathbb{R}^N}\partial_k
w_{x_h}\big(L_h\partial_j w_{x_h}\big)dy=\mathrm{O}(h^4)\,.
\end{equation}
Combining (\ref{id4.12}), (\ref{id4.11}) and (\ref{id4.9}), we
obtain
\begin{equation}\label{id4.13}
\frac{d\,''(\lambda)}{h^N}=\frac{h^2}{2N}\big[V(x_0)+\lambda\big]^{-3}
m(x_0)^{-\frac{N}{2}}\int\limits_{\mathbb{R}^N}|y|^2w^2dy \Delta
V(x_0)+\mathrm{O}(h^4)\,.
\end{equation}
Consequently, by~(\ref{id4.13}), we have
$p(d\,'')=\frac{1}{2}(1+\frac{\Delta V(x_0)}{|\Delta V(x_0)|})$. On
the other hand, by Theorem~\ref{thm4.1}, we have $n(L_h)=n+1$. Thus
we complete the proof of Theorem~{\ref{thm1.3}} by the orbital
stability and instability criteria of~\cite{[GS1]}-\cite{[GS2]}.\\\\
\textbf{Case III: $\nabla V(x_0)=0\,,\Delta V(x_0)=0$.}\\

In this case, we shall use (\ref{id4.14}), (\ref{id4.15}) and
(\ref{id4.9}) to compute the $\mathrm{O}(h^4)$ term of
$d\,''(\lambda)/h^N$.

For the integral
$\int\limits_{\mathbb{R}^N}R_0\Big[v_h-L_hR_0\Big]dy$, by
(\ref{id4.16}) and integration by parts, we obtain
\begin{align*}
&\int\limits_{\mathbb{R}^N}R_0\Big[v_h-L_hR_0\Big]dy\\
=&-\big[V(x_h)+\lambda\big]^{-2}\int\limits_{\mathbb{R}^N}
\big(\frac{1}{p-1}v_h+\frac{1}{2}y\cdot\nabla v_h\big)
\big[V(hy+x_h)-V(x_h)+\frac{h}{2}y\cdot\nabla V(hy+x_h)\big]v_hdy\\
&+\big[V(x_h)+\lambda\big]^{-2}\int\limits_{\mathbb{R}^N}
\big(\frac{1}{p-1}v_h+\frac{1}{2}y\cdot\nabla v_h\big)
\frac{h}{2}y\cdot\nabla m(hy+x_h)v_h^pdy\\
=&\frac{1}{8}\big[V(x_h)+\lambda\big]^{-2}\int\limits_{\mathbb{R}^N}
\Big[3hy\cdot\nabla
V(hy+x_h)+h^2\sum\limits_{i,j=1}^N\partial_{ij}V(hy+x_h)y_iy_j\Big]v_h^2dy\\
&+\frac{N}{8(N+2)}\big[V(x_h)+\lambda\big]^{-2}\int\limits_{\mathbb{R}^N}
\Big[hy\cdot\nabla
m(hy+x_h)-h^2\sum\limits_{i,j=1}^N\partial_{ij}m(hy+x_h)y_iy_j\Big]v_h^{p+1}dy\,.
\end{align*}
Hence by (\ref{id4.10}), (\ref{id4.8}) and Taylor's formulas of
$V$ and $m$, we have
\begin{align}\label{id4.20}
&\int\limits_{\mathbb{R}^N}R_0\Big[v_h-L_hR_0\Big]dy\nonumber\\
=&\frac{1}{8}\big[V(x_h)+\lambda\big]^{-2}\int\limits_{\mathbb{R}^N}
\Big[4h^2\sum\limits_{i,j=1}^N\partial_{ij}V(x_0)y_iy_jw_{x_h}^2
+8h^4\sum\limits_{i,j=1}^N\partial_{ij}V(x_0)y_iy_jw_{x_h}\phi_2\nonumber\\
&+4h^4\sum\limits_{i,j,k=1}^N\partial_{ijk}V(x_0)x_{1,i}y_jy_kw_{x_h}^2
+h^4\sum\limits_{i,j,k,l=1}^N\partial_{ijkl}V(x_0)y_iy_jy_ky_lw_{x_h}^2\Big]dy\nonumber\\
&+\frac{N}{8(N+2)}\big[V(x_h)+\lambda\big]^{-2}\int\limits_{\mathbb{R}^N}
\Big[-\frac{h^4}{3}\sum\limits_{i,j,k,l=1}^N\partial_{ijkl}m(x_0)y_iy_jy_ky_l\Big]
w_{x_h}^{p+1}dy+\mathrm{o}(h^4).
\end{align}

For the sum
$\sum\limits_{j,k=1}^Nc_h^jc_h^k\int\limits_{\mathbb{R}^N}\partial_k
w_{x_h}\big(L_h\partial_j w_{x_h}\big)dy$, by (\ref{eq4.4}) and
(\ref{id4.17}), we obtain
\begin{eqnarray}\label{id4.21}
\sum\limits_{j,k=1}^Nc_h^jc_h^k\int\limits_{\mathbb{R}^N}\partial_k
w_{x_h}\big(L_h\partial_j
w_{x_h}\big)dy=-\frac{h^4}{N+2}\int\limits_{\mathbb{R}^N}w^{p+1}dy\nabla^2G(x_0)
\textbf{c}_0\cdot\textbf{c}_0+\mathrm{o}(h^4)\,.
\end{eqnarray}

For the integral $\int\limits_{\mathbb{R}^N}R_1\big(L_hR_1\big)dy$,
by (\ref{eq4.5}), $R_1\big(\frac{y}{\sqrt{V(x_h)+\lambda}}\big)$
satisfies
\begin{align}
&\Delta
R-R+pw^{p-1}R+\big[V(x_h)+\lambda\big]^{\frac{N}{4}-3}m(x_h)^{-\frac{N}{4}}
\sum\limits_{i,j=1}^N\partial_{ij}V(x_0)y_iy_jw\nonumber\\
&-\frac{1}{2}\big[V(x_h)+\lambda\big]^{\frac{N}{4}-2}m(x_h)^{-\frac{N}{4}-1}
\sum\limits_{i,j=1}^N\partial_{ij}m(x_0)y_iy_jw^p=0\,.
\end{align}
Hence
\begin{align}\label{id4.22}
&\int\limits_{\mathbb{R}^N}R_1\big(L_hR_1\big)dy=\int\limits_{\mathbb{R}^N}
R_1(L_{x_h}R_1)dy+\mathrm{O}(h^2)\nonumber\\
=&\big[V(x_h)+\lambda\big]^{-5}m(x_h)^{-\frac{N}{2}}\sum\limits_{i,j,k,l=1}^N
\partial_{ij}V(x_0)\partial_{kl}V(x_0)\int\limits_{\mathbb{R}^N}y_iy_jwL_0^{-1}(y_ky_lw)dy\nonumber\\
&-\big[V(x_h)+\lambda\big]^{-4}m(x_h)^{-\frac{N}{2}-1}\sum\limits_{i,j,k,l=1}^N
\partial_{ij}V(x_0)\partial_{kl}m(x_0)\int\limits_{\mathbb{R}^N}y_iy_jwL_0^{-1}(y_ky_lw^p)dy\\
&+\frac{1}{4}\big[V(x_h)+\lambda\big]^{-3}m(x_h)^{-\frac{N}{2}-2}\sum\limits_{i,j,k,l=1}^N
\partial_{ij}m(x_0)\partial_{kl}m(x_0)\int\limits_{\mathbb{R}^N}
y_iy_jw^pL_0^{-1}(y_ky_lw^p)dy+\mathrm{O}(h^2).\nonumber
\end{align}

As in Section 3, we have used the following identities:
\begin{align*}
\sum\limits_{i,j=1}^N\partial_{ij}V(x_0)\int\limits_{\mathbb{R}^N}y_iy_jw_{x_h}^2dy
=\frac{1}{N}\int\limits_{\mathbb{R}^N}|y|^2w_{x_h}^2dy\Delta
V(x_0)=0\,,
\end{align*}
\begin{align*}
&\sum\limits_{i,j=1}^N\partial_{ij}V(x_0)\int\limits_{\mathbb{R}^N}y_iy_jw_{x_h}\phi_2dy\\
=&\frac{1}{2}\big[V(x_h)+\lambda\big]^{-3}m(x_h)^{-\frac{N}{2}}\sum\limits_{i,j,k,l=1}^N
\partial_{ij}V(x_0)\partial_{kl}V(x_0)\int\limits_{\mathbb{R}^N}y_iy_jwL_0^{-1}(y_ky_lw)dy\\
&-\frac{1}{2}\big[V(x_h)+\lambda\big]^{-2}m(x_h)^{-\frac{N}{2}-1}\sum\limits_{i,j,k,l=1}^N
\partial_{ij}V(x_0)\partial_{kl}m(x_0)\int\limits_{\mathbb{R}^N}y_iy_jwL_0^{-1}(y_ky_lw^p)dy\,,
\end{align*}
\begin{align*}
\begin{cases}
&\sum\limits_{i,j,k=1}^N\partial_{ijk}V(x_0)x_{1,i}\int\limits_{\mathbb{R}^N}y_jy_kw_{x_h}^2dy
=\frac{1}{N}\int\limits_{\mathbb{R}^N}|y|^2w_{x_h}^2dy\nabla(\Delta
V)(x_0)\cdot \textbf{x}_1\,,\\
&\sum\limits_{i,j,k,l=1}^N\partial_{ijkl}V(x_0)\int\limits_{\mathbb{R}^N}y_iy_jy_ky_lw_{x_h}^2
=\frac{3}{N(N+2)}\int\limits_{\mathbb{R}^N}|y|^4w_{x_h}^2dy\Delta^2V(x_0)\,,\\
&\sum\limits_{i,j,k,l=1}^N\partial_{ijkl}m(x_0)\int\limits_{\mathbb{R}^N}y_iy_jy_ky_lw_{x_h}^{p+1}
=\frac{3}{N(N+2)}\int\limits_{\mathbb{R}^N}|y|^4w_{x_h}^{p+1}dy\Delta^2m(x_0)\,,
\end{cases}
\end{align*}
\begin{align*}
\begin{cases}
&\int\limits_{\mathbb{R}^N}
y^2_{N}wL_0^{-1}(y^2_Nw)dy=\frac{1}{N^2}\int\limits_{\mathbb{R}^N}r^2wL_0^{-1}(r^2w)dy
+\frac{2(N-1)}{N^2(N+2)}\int\limits_{\mathbb{R}^N}
r^2w\Phi_1(r)dy\,,\\
&\int\limits_{\mathbb{R}^N}
y^2_{N-1}wL_0^{-1}(y^2_Nw)dy=\frac{1}{N^2}\int\limits_{\mathbb{R}^N}r^2wL_0^{-1}(r^2w)dy
-\frac{2}{N^2(N+2)}\int\limits_{\mathbb{R}^N}r^2w\Phi_1(r)dy\,,\\
&\int\limits_{\mathbb{R}^N}
y_{N-1}y_NwL_0^{-1}(y_{N-1}y_Nw)dy=\frac{1}{N(N+2)}\int\limits_{\mathbb{R}^N}r^2w\Phi_1(r)dy\,,
\end{cases}
\end{align*}
\begin{align*}
\begin{cases}
&\int\limits_{\mathbb{R}^N}
y^2_{N}wL_0^{-1}(y^2_Nw^p)dy=\frac{1}{N^2}\int\limits_{\mathbb{R}^N}r^2wL_0^{-1}(r^2w^p)dy
+\frac{2(N-1)}{N^2(N+2)}\int\limits_{\mathbb{R}^N}
r^2w\Phi_0(r)dy\,,\\
&\int\limits_{\mathbb{R}^N}
y^2_{N-1}wL_0^{-1}(y^2_Nw^p)dy=\frac{1}{N^2}\int\limits_{\mathbb{R}^N}r^2wL_0^{-1}(r^2w^p)dy
-\frac{2}{N^2(N+2)}\int\limits_{\mathbb{R}^N}r^2w\Phi_0(r)dy\,,\\
&\int\limits_{\mathbb{R}^N}
y_{N-1}y_NwL_0^{-1}(y_{N-1}y_Nw^p)dy=\frac{1}{N(N+2)}\int\limits_{\mathbb{R}^N}r^2w\Phi_0(r)dy\,,
\end{cases}
\end{align*}
where $\Phi_0,\Phi_1$ satisfy
\begin{align*}
\begin{cases}
&\Phi_0''+\frac{N-1}{r}\Phi_0'-\Phi_0+pw^{p-1}\Phi_0-\frac{2N}{r^2}\Phi_0-r^2w^p=0,\,r\in(0,\infty)\,,\\
&\Phi_0(0)=\Phi_0'(0)=0\,,
\end{cases}
\end{align*}
and
\begin{align*}
\begin{cases}
&\Phi_1''+\frac{N-1}{r}\Phi_1'-\Phi_0+pw^{p-1}\Phi_1-\frac{2N}{r^2}\Phi_1-r^2w=0,\,r\in(0,\infty)\,,\\
&\Phi_1(0)=\Phi_1'(0)=0\,,
\end{cases}
\end{align*}
which can be proved as in Appendix B.

Therefore, combining (\ref{id4.9}), (\ref{id4.20}), (\ref{id4.21})
and (\ref{id4.22}), we obtain
\begin{align}\label{id4.23}
\frac{d\,''(\lambda)}{h^{N+4}}+\mathrm{o}(1)=H_2(x_0)+H_3(x_0)+H_4(x_0)\equiv
H(x_0)\,,
\end{align}
where
\begin{align}
H_2(x_0)=&\frac{3}{N(N+2)}\big[V(x_0)+\lambda\big]^{-5}m(x_0)^{-\frac{N}{2}}
\int\limits_{\mathbb{R}^N}|y|^2w\Phi_1(|y|)dy\|\nabla^2V(x_0)\|_2^2\nonumber\\
&-\frac{3}{N(N+2)}\big[V(x_0)+\lambda\big]^{-4}m(x_0)^{-\frac{N}{2}-1}
\int\limits_{\mathbb{R}^N}|y|^2w\Phi_0(|y|)dy\nabla^2V(x_0)\cdot\nabla^2m(x_0)\nonumber\\
&+\frac{1}{4N^2}\big[V(x_0)+\lambda\big]^{-3}m(x_0)^{-\frac{N}{2}-2}
\int\limits_{\mathbb{R}^N}|y|^2w^pL_0^{-1}(|y|^2w^p)dy|\Delta
m(x_0)|^2\nonumber\\
&+\frac{1}{2N(N+2)}\big[V(x_0)+\lambda\big]^{-3}m(x_0)^{-\frac{N}{2}-2}
\int\limits_{\mathbb{R}^N}|y|^2w^p\Phi_0(|y|)dy\|\nabla^2m(x_0)\|_2^2\nonumber\\
&-\frac{1}{2N^2(N+2)}\big[V(x_0)+\lambda\big]^{-3}m(x_0)^{-\frac{N}{2}-2}
\int\limits_{\mathbb{R}^N}|y|^2w^p\Phi_0(|y|)dy|\Delta m(x_0)|^2\,,
\end{align}
\begin{align}
H_3(x_0)=&\frac{1}{2N}\big[V(x_0)+\lambda\big]^{-3}m(x_0)^{-\frac{N}{2}}
\int\limits_{\mathbb{R}^N}|y|^2w^2dy\nabla(\Delta m)(x_0)\cdot
\textbf{x}_1\nonumber\\
&-\frac{1}{N+2}\int\limits_{\mathbb{R}^N}w^{p+1}dy
\textbf{c}_0\cdot\big[\nabla^2G(x_0)\big]^{-1}\textbf{c}_0\,,
\end{align}
\begin{align}
H_4(x_0)=&\frac{3}{8N(N+2)}\big[V(x_0)+\lambda\big]^{-4}m(x_0)^{-\frac{N}{2}}
\int\limits_{\mathbb{R}^N}|y|^4w^2dy\Delta^2V(x_0)\nonumber\\
&-\frac{1}{8(N+2)^2}\big[V(x_0)+\lambda\big]^{-3}m(x_0)^{-\frac{N}{2}-1}
\int\limits_{\mathbb{R}^N}|y|^4w^{p+1}dy\Delta^2m(x_0)\,.
\end{align}
Consequently, $p(d\,'')=1$ if $H(x_0)>0$, where $H(x_0)$ defined
in ~(\ref{id4.23}) involves the $i$-th derivatives (for $0\leq
i\leq 4$) of $V$ and $m$ at $x_0$. On the other hand, by
Theorem~\ref{thm4.1}, we have $n(L_h)=n+1$. Thus we complete the
proof of Theorem~{\ref{thm1.4}} by the orbital stability and
instability criteria of~\cite{[GS1]}-\cite{[GS2]}.

\section{Appendix A}
\ \ \ \ In this section, we want to prove (\ref{id2.13}) of
Section 2, i.e.
\begin{align}\label{id5.1}
\int_{\mathbb{R}^N}\left(L_h\partial_j w_{x_h}\right)\partial_k
w_{x_h}dy=\frac{h^2}{p+1}\int_{\mathbb{R}^N}w_{x_h}^{p+1}dy\partial
_{jk}m(x_0)+\mathrm{o}(h^2)\,.
\end{align}
\begin{proof}
Note that by Lemma~\ref{lem2.3} and \ref{lem2.4}, we obtain
\begin{align*}
L_h\partial_j
w_{x_h}=&\Big[m(hy+x_h)-m(x_h)\Big]pw_{x_h}^{p-1}\partial_j
w_{x_h}+m(hy+x_h)p(v_h^{p-1}-w_{x_h}^{p-1})\partial_j w_{x_h}\\
=&\frac{h^2}{2}\sum_{i,l=1}^N\partial_{il}m(x_0)y_iy_lpw_{x_h}^{p-1}\partial_jw_{x_h}
+h^2m(x_h)p(p-1)w_{x_h}^{p-2}\phi_2\partial_{j}w_{x_h}+\mathrm{o}(h^2)\,.
\end{align*}
Hence we may write the integral
$\int_{\mathbb{R}^N}\,\left(L_h\partial_j w_{x_h}\right)\partial_k
w_{x_h}dy$ as follows:
\begin{align}\label{id5.5}
\int_{\mathbb{R}^N}\left(L_h\partial_{j}w_{x_h}\right)\partial_{k}w_{x_h}\,dy=I_1+I_2+\mathrm{o}(h^2),
\end{align}
where
\begin{align}
I_1=&\frac{h^2}{2}\sum_{i,l=1}^N\partial_{il}m(x_0)\int_{\mathbb{R}^N}
y_iy_lpw_{x_h}^{p-1}\partial_{j}w_{x_h}\partial_{k}w_{x_h}dy\,,\label{id5.2}
\\
I_2=&h^2\int_{\mathbb{R}^N}m(x_h)p(p-1)w_{x_h}^{p-2}\phi_2\partial_{j}w_{x_h}\partial_{k}w_{x_h}dy\,.\label{id5.3}
\end{align}

Note that from~(\ref{eq2.2}), we have
\begin{equation}\label{id5.4}
\Big[\Delta-\lambda+m(x_h)pw_{x_h}^{p-1}\Big]\partial_{jk}w_{x_h}+
m(x_h)p(p-1)w_{x_h}^{p-2}\partial_{j}w_{x_h}\partial_{k}w_{x_h}=0\,.
\end{equation}
Hence by (\ref{eq2.6}), (\ref{id5.3}) and (\ref{id5.4}), we may use
integration by parts to get
\begin{align}\label{id5.6}
I_2=&-h^2\int_{\mathbb{R}^N}\phi_2\Big[\Delta-\lambda+m(x_h)pw_{x_h}^{p-1}\Big]
\partial_{jk}w_{x_h}dy\notag\\
=&-h^2\int_{\mathbb{R}^N}\partial_{jk}w_{x_h}\Big[\Delta-\lambda+m(x_h)pw_{x_h}^{p-1}\Big]\phi_2dy
\notag\\
=&\frac{h^2}{2}\sum_{i,l=1}^N\partial_{il}m(x_0)\int_{\mathbb{R}^N}
y_iy_l w_{x_h}^p
\partial_{jk}w_{x_h}dy\notag\\
=&-\frac{h^2}{2}
\sum_{i,l=1}^N\partial_{il}m(x_0)\int_{\mathbb{R}^N}
\frac{\partial(y_iy_lw_{x_h}^p)}{\partial y_j}\partial_{k}w_{x_h}dy\notag\\
=&-\frac{h^2}{2}\sum_{i,l=1}^N\partial_{il}m(x_0)\int_{\mathbb{R}^N}
y_iy_lpw_{x_h}^{p-1}\partial_{j}w_{x_h}\partial_{k}w_{x_h}dy
-h^2\partial_{jk}m(x_0)\int_{\mathbb{R}^N}y_kw_{x_h}^{p}\partial_{k}w_{x_h}dy\notag\\
=&-\frac{h^2}{2}\sum_{i,l=1}^N\partial_{il}m(x_0)\int_{\mathbb{R}^N}
y_iy_lpw_{x_h}^{p-1}\partial_{j}w_{x_h}\partial_{k}w_{x_h}dy
+\frac{h^2}{p+1}\partial_{jk}m(x_0)\int_{\mathbb{R}^N}w_{x_h}^{p+1}dy\,.
\end{align}
Combining (\ref{id5.5}), (\ref{id5.2}) and (\ref{id5.6}), we
obtain~(\ref{id5.1}).
\end{proof}

\section{Appendix B}
\ \ \ \ In this section, we prove (\ref{id3.21-1}),
(\ref{id3.21-2}) and (\ref{id3.21-3}) of Section~3, i.e.
\begin{align}
&\int\limits_{\mathbb{R}^N}
y^2_{N}w^pL_0^{-1}(y^2_Nw^p)dy=\frac{1}{N^2}\int\limits_{\mathbb{R}^N}r^2w^pL_0^{-1}(r^2w^p)dy
+\frac{2(N-1)}{N^2(N+2)}\int\limits_{\mathbb{R}^N}
r^2w^p\Phi_0(r)dy\,,\label{id6.1}\\
&\int\limits_{\mathbb{R}^N}
y^2_{N-1}w^pL_0^{-1}(y^2_Nw^p)dy=\frac{1}{N^2}\int\limits_{\mathbb{R}^N}r^2w^pL_0^{-1}(r^2w^p)dy
-\frac{2}{N^2(N+2)}\int\limits_{\mathbb{R}^N}r^2w^p\Phi_0(r)dy\,,\label{id6.2}\\
&\int\limits_{\mathbb{R}^N}
y_{N-1}y_Nw^pL_0^{-1}(y_{N-1}y_Nw^p)dy=\frac{1}{N(N+2)}
\int\limits_{\mathbb{R}^N}r^2w^p\Phi_0(r)dy\,,\label{id6.3}
\end{align}
where $r:=|y|$ and $\Phi_0$ satisfies
\begin{equation}\label{eq6.1}
\left\{ \begin{aligned}
&\Phi_0''+\frac{N-1}{r}\Phi_0'-\Phi_0+pw^{p-1}\Phi_0-\frac{2N}{r^2}\Phi_0-r^2w^p=0,\,r\in(0,\infty),\\
&\Phi_0(0)=\Phi_0'(0)=0.
\end{aligned} \right.
\end{equation}

\begin{proof}
From~(\ref{eq6.1}), it is easy to check that
\begin{equation}
L_0\left[\Phi_0\frac{y_N^2}{r^2}+\frac{1}{N}L_0^{-1}(r^2w^p)
-\frac{1}{N}\Phi_0\right]=y_N^2w^p\,,{\rm{and}}\,
L_0\left[\Phi_0\frac{y_{N-1}y_N}{r^2}\right]=y_{N-1}y_Nw^p.
\end{equation}
Then using the polar coordinate, we obtain
\begin{align*}
&\int\limits_{\mathbb{R}^N} y^2_{N}w^pL_0^{-1}(y^2_Nw^p)dy\\
=&\int\limits_{\mathbb{R}^N}
y^2_{N}w_{x_0}^p\left[\Phi_0(r)\frac{y_N^2}{r^2}-\frac{1}{N}\Phi_0(r)+\frac{1}{N}L_0^{-1}(r^2w^p)\right]dy\nonumber\\
=&\int\limits_{\mathbb{R}^N}
r^2\cos^2\theta_{N-1}w^p\left[\Phi_0(r)\frac{r^2\cos^2\theta_{N-1}}{r^2}-\frac{1}{N}\Phi_0(r)+\frac{1}{N}L_0^{-1}(r^2w^p)\right]dy\nonumber\\
=&\frac{\int\limits_{0}^{\pi}\cos^4\theta_{N-1}\sin^{N-2}\theta_{N-1}d\theta_{N-1}}
{\int\limits_{0}^{\pi}\sin^{N-2}\theta_{N-1}d\theta_{N-1}}\int\limits_{\mathbb{R}^N}r^2w^p\Phi_0(r)dy\nonumber\\
&+\frac{\int\limits_{0}^{\pi}\cos^2\theta_{N-1}\sin^{N-2}\theta_{N-1}d\theta_{N-1}}
{\int\limits_{0}^{\pi}\sin^{N-2}\theta_{N-1}d\theta_{N-1}}
\int\limits_{\mathbb{R}^N}r^2w_{x_0}^p\left[-\frac{1}{N}\Phi_0(r)+\frac{1}{N}L_0^{-1}(r^2w_0^p)\right]dy\nonumber\\
=&\frac{3}{N(N+2)}\,\int\limits_{\mathbb{R}^N}r^2w^p\Phi_0(r)dy
+\frac{1}{N}\,\int\limits_{\mathbb{R}^N}r^2w^p\left[-\frac{1}{N}\Phi_0(r)+\frac{1}{N}L_0^{-1}(r^2w^p)\right]dy\nonumber\\
=&\frac{1}{N^2}\int\limits_{\mathbb{R}^N}r^2w^pL_0^{-1}(r^2w^p)dy
+\frac{2(N-1)}{N^2(N+2)}\int\limits_{\mathbb{R}^N}r^2w^p\Phi_0(r)dy\,\nonumber.
 \end{align*}
This completes the proof of (\ref{id6.1}). Similarly, one may obtain
(\ref{id6.2}) and (\ref{id6.3}), respectively.
\end{proof}

\section{Appendix C}
\ \ \ \ In this section, we prove (\ref{eq4.4}) of Section~4, i.e.
\begin{equation}\label{id7.11}
\int\limits_{\mathbb{R}^N}\partial_{k} w_{x_h}L_h\big(\partial_{j}
w_{x_h}+h\psi_j\big)dy=-\frac{h^2}{N+2}\int\limits_{\mathbb{R}^N}w^{p+1}dy
\partial_{jk}G(x_0)+\mathrm{o}(h^2).
\end{equation}
\begin{proof}
Note that by (\ref{id4.2}), (\ref{id4.3}) and (\ref{eq4.3}), we
obtain
\begin{align*}
L_h\partial_j w_{x_h}=&L_{x_h}\partial_j
w_{x_h}+\Big[m(hy+x_h)-m(x_h)\Big]pw_{x_h}^{p-1}\partial_j
w_{x_h}\\
&+m(hy+x_h)p(v_h^{p-1}-w_{x_h}^{p-1})\partial_j w_{x_h}
-\Big[V(hy+x_h)-V(x_h)\Big]\partial_jw_{x_h}\\
=&h\Big[y\cdot\nabla m(x_h)pw_{x_h}^{p-1}
+m(x_h)p(p-1)w_{x_h}^{p-2}\phi_1-y\cdot\nabla V(x_h)\Big]\partial_jw_{x_h}\\
&+h^2\Big[\frac{1}{2}\sum_{i,l}^N\partial_{il}m(x_h)y_iy_lpw_{x_h}^{p-1}
+y\cdot\nabla
m(x_h)p(p-1)w_{x_h}^{p-2}\phi_1
+m(x_h)p(p-1)w_{x_h}^{p-2}\phi_2\\
&+\frac{1}{2}m(x_h)p(p-1)(p-2)w_{x_h}^{p-3}\phi_1^2
-\frac{1}{2}\sum_{i,l}^N\partial_{il}V(x_h)y_iy_l\Big]\partial_jw_{x_h}+\mathrm{o}(h^2)\,,
\end{align*}
and
\begin{align*}
L_h\psi_j=&L_{x_h}\psi_j+\Big[m(hy+x_h)-m(x_h)\Big]pw_{x_h}^{p-1}\psi_j\\
&+m(hy+x_h)p(v_h^{p-1}-w_{x_h}^{p-1})\psi_j
-\Big[V(hy+x_h)-V(x_h)\Big]\psi_j\\
=&-\Big[y\cdot\nabla m(x_h)pw_{x_h}^{p-1}
+m(x_h)p(p-1)w_{x_h}^{p-2}\phi_1-y\cdot\nabla V(x_h)\Big]\partial_jw_{x_h}\\
&+h\Big[y\cdot\nabla m(x_h)pw_{x_h}^{p-1}
+m(x_h)p(p-1)w_{x_h}^{p-2}\phi_1-y\cdot\nabla
V(x_h)\Big]\psi_j+\mathrm{O}(h^2).
\end{align*}
Hence we may write the integral
$\int\limits_{\mathbb{R}^N}\partial_{k} w_{x_h}L_h\big(\partial_{j}
w_{x_h}+h\psi_j\big)dy$ as follows:
\begin{align}\label{id7.1}
\int\limits_{\mathbb{R}^N}\partial_{k} w_{x_h}L_h\big(\partial_{j}
w_{x_h}+h\psi_j\big)dy=I_0+I_1+I_2+\mathrm{o}(h^2),
\end{align}
where
\begin{align}
I_0=&h^2\int_{\mathbb{R}^N}\Big[y\cdot\nabla m(x_h)pw_{x_h}^{p-1}
+m(x_h)p(p-1)w_{x_h}^{p-2}\phi_1-y\cdot\nabla
V(x_h)\Big]\psi_j\partial_k w_{x_h}dy\,,\label{id7.2}\\
I_1=&h^2\int_{\mathbb{R}^N}
\Big[\frac{1}{2}\sum_{i,l}^N\partial_{il}m(x_h)y_iy_lpw_{x_h}^{p-1}
+y\cdot\nabla m(x_h)p(p-1)w_{x_h}^{p-2}\phi_1\nonumber\\
&\quad\quad\quad+\frac{1}{2}m(x_h)p(p-1)(p-2)w_{x_h}^{p-3}\phi_1^2
-\frac{1}{2}\sum_{i,l}^N\partial_{il}V(x_h)y_iy_l\Big]\partial_j
w_{x_h}\partial_k w_{x_h}dy\,,\label{id7.3}\\
I_2=&h^2\int_{\mathbb{R}^N}m(x_h)p(p-1)w_{x_h}^{p-2}\phi_2\partial_j
w_{x_h}\partial_k w_{x_h}dy\,.\label{id7.4}
\end{align}

Note that from~(\ref{eq2.2}), we have
\begin{equation}\label{id7.5}
\Big[\Delta-\big(V(x_h)+\lambda\big)+m(x_h)pw_{x_h}^{p-1}\Big]\partial_{jk}w_{x_h}+
m(x_h)p(p-1)w_{x_h}^{p-2}\partial_{j}w_{x_h}\partial_{k}w_{x_h}=0\,.
\end{equation}
Hence by (\ref{eq4.7}), (\ref{id7.3}) and (\ref{id7.4}), we may use
integration by parts to get
\begin{align}\label{id7.6}
I_2=&-h^2\int_{\mathbb{R}^N}\phi_2\Big[\Delta-\big(V(x_h)+\lambda\big)+m(x_h)pw_{x_h}^{p-1}\Big]
\partial_{jk}w_{x_h}dy\notag\\
=&-h^2\int_{\mathbb{R}^N}\partial_{jk}w_{x_h}
\Big[\Delta-\big(V(x_h)+\lambda\big)+m(x_h)pw_{x_h}^{p-1}\Big]\phi_2dy
\notag\\
=&h^2\int_{\mathbb{R}^N}\Big[-y\cdot\nabla
V(x_h)\phi_1-\frac{1}{2}\sum\limits_{i,l=1}^N\partial_{il}V(x_h)y_iy_lw_{x_h}
+y\cdot\nabla m(x_h)pw_{x_h}^{p-1}\phi_1\nonumber\\
&\quad\quad\quad+\frac{1}{2}\sum\limits_{i,l=1}^N\partial_{il}m(x_h)y_iy_lw_{x_h}^p
+\frac{1}{2}m(x_h)p(p-1)w_{x_h}^{p-2}\phi_1^2\Big]
\partial_{jk}w_{x_h}dy\notag\\
=&h^2\int_{\mathbb{R}^N}\Big[\partial_jV(x_h)\phi_1+y\cdot\nabla
V(x_h)\partial_j\phi_1
+\frac{1}{2}\sum\limits_{i,l=1}^N\partial_{il}V(x_h)y_iy_l\partial_jw_{x_h}
+\partial_{jk}V(x_h)y_kw_{x_h}\nonumber\\
&\quad\quad\quad-\partial_jm(x_h)pw_{x_h}^{p-1}\phi_1-y\cdot\nabla m(x_h)p(p-1)w_{x_h}^{p-2}\phi_1\partial_jw_{x_h}-y\cdot\nabla m(x_h)pw_{x_h}^{p-1}\partial_j\phi_1\nonumber\\
&\quad\quad\quad-\frac{1}{2}\sum\limits_{i,l=1}^N\partial_{il}m(x_h)y_iy_lpw_{x_h}^{p-1}\partial_jw_{x_h}
-\partial_{jk}m(x_h)y_kw_{x_h}^p\nonumber\\
&\quad\quad\quad-\frac{1}{2}m(x_h)p(p-1)(p-2)w_{x_h}^{p-3}\phi_1^2\partial_jw_{x_h}
-m(x_h)p(p-1)w_{x_h}^{p-2}\phi_1\partial_j\phi_1\Big]
\partial_kw_{x_h}dy\notag\\
=&-I_1-h^2\int_{\mathbb{R}^N}\Big[y\cdot\nabla m(x_h)pw_{x_h}^{p-1}
+m(x_h)p(p-1)w_{x_h}^{p-2}\phi_1-y\cdot\nabla
V(x_h)\Big]\partial_j\phi_1\partial_k w_{x_h}dy\nonumber\\
&+h^2\int_{\mathbb{R}^N}\Big[\partial_jV(x_h)\phi_1
+\partial_{jk}V(x_h)y_kw_{x_h} -\partial_jm(x_h)pw_{x_h}^{p-1}\phi_1
-\partial_{jk}m(x_h)y_kw_{x_h}^p\Big]
\partial_kw_{x_h}dy.
\end{align}
Note that from~(\ref{eq4.6}), we have
\begin{align}\label{id7.7}
&\Delta(\partial_j\phi_1)-\left[V(x_0)+\lambda\right]\partial_j\phi_1+m(x_0)pw_{x_0}^{p-1}\partial_j\phi_1
+m(x_0)p(p-1)w_{x_0}^{p-2}\phi_1\partial_jw_{x_0}\nonumber\\
&-y\cdot\nabla
V(x_0)\partial_jw_{x_0}-\partial_jV(x_0)w_{x_0}+y\cdot\nabla
m(x_0)pw_{x_0}^{p-1}\partial_jw_{x_0}+\partial_jm(x_0)w_{x_0}^p=0\,,
\end{align}
and by direct computation,
\begin{align}\label{id7.8}
\begin{cases}
&L_{x_0}w_{x_0}=(p-1)m(x_0)w_{x_0}^p\,,\\
&L_{x_0}(\frac{1}{p-1}w_{x_0}+\frac{1}{2}y\cdot\nabla
w_{x_0})=\left[V(x_0)+\lambda\right]w_{x_0}\,.
\end{cases}
\end{align}

Thus we may use~(\ref{id7.2})-(\ref{id7.8}) and integration by parts
to get
\begin{align}\label{id7.9}
&I_0+I_1+I_2\nonumber\\
=&h^2\int_{\mathbb{R}^N}\Big[y\cdot\nabla m(x_h)pw_{x_h}^{p-1}
+m(x_h)p(p-1)w_{x_h}^{p-2}\phi_1-y\cdot\nabla
V(x_h)\Big]\Big(\psi_j-\partial_j\phi_1\Big)\partial_k
w_{x_h}dy\nonumber\\
&+h^2\int_{\mathbb{R}^N}\Big[\partial_jV(x_h)\phi_1
+\partial_{jk}V(x_h)y_kw_{x_h} -\partial_jm(x_h)pw_{x_h}^{p-1}\phi_1
-\partial_{jk}m(x_h)y_kw_{x_h}^p\Big]
\partial_kw_{x_h}dy\nonumber\\
=&-h^2\int_{\mathbb{R}^N}\Big(\psi_j-\partial_j\phi_1\Big)L_{x_h}\psi_kdy
+h^2\int_{\mathbb{R}^N}\Big[\partial_jV(x_h)
-\partial_jm(x_h)pw_{x_h}^{p-1}\Big]\phi_1
\partial_kw_{x_h}dy\nonumber\\
&+h^2\int_{\mathbb{R}^N}\Big[\partial_{jk}V(x_h)y_kw_{x_h}
-\partial_{jk}m(x_h)y_kw_{x_h}^p\Big]
\partial_kw_{x_h}dy\nonumber\\
=&h^2\int_{\mathbb{R}^N}\Big[\partial_jV(x_0)w_{x_0}
-\partial_jm(x_0)w_{x_0}^p\Big]\psi_kdy
-h^2\int_{\mathbb{R}^N}\Big[\partial_jV(x_h)w_{x_h}
-\partial_jm(x_h)w_{x_h}^p\Big]\partial_k\phi_1dy\nonumber\\
&-h^2\int_{\mathbb{R}^N}\Big[\frac{1}{2}\partial_{jk}V(x_h)w_{x_h}^2
-\frac{1}{p+1}\partial_{jk}m(x_h)w_{x_h}^{p+1}\Big]dy+\mathrm{o}(h^2)\nonumber\\
=&h^2\int_{\mathbb{R}^N}\Big[\partial_jV(x_0)\big(V(x_0)+\lambda\big)^{-1}\big(\frac{1}{p-1}w_{x_0}+\frac{1}{2}y\cdot\nabla
w_{x_0}\big)\nonumber\\
&\quad\quad\quad-\frac{1}{p-1}m(x_0)^{-1}\partial_jm(x_0)w_{x_0}\Big]L_{x_0}\Big(\psi_k-\partial_k\phi_1\Big)dy\nonumber\\
&-h^2\int_{\mathbb{R}^N}\Big[\frac{1}{2}\partial_{jk}V(x_0)w_{x_0}^2
-\frac{1}{p+1}\partial_{jk}m(x_0)w_{x_0}^{p+1}\Big]dy+\mathrm{o}(h^2)\nonumber\\
=&-h^2\int_{\mathbb{R}^N}\Big[\partial_jV(x_0)\big(V(x_0)+\lambda\big)^{-1}\big(\frac{1}{p-1}w_{x_0}+\frac{1}{2}y\cdot\nabla
w_{x_0}\big)\nonumber\\
&\quad\quad\quad-\frac{1}{p-1}m(x_0)^{-1}\partial_jm(x_0)w_{x_0}\Big]\Big[\partial_kV(x_0)w_{x_0}
-\partial_km(x_0)w_{x_0}^p\Big]dy\nonumber\\
&-h^2\int_{\mathbb{R}^N}\Big[\frac{1}{2}\partial_{jk}V(x_0)w_{x_0}^2
-\frac{1}{p+1}\partial_{jk}m(x_0)w_{x_0}^{p+1}\Big]dy+\mathrm{o}(h^2)\nonumber\\
=&-h^2\Big[\big(\frac{1}{p-1}-\frac{N}{4}\big)\left[V(x_0)+\lambda\right]^{-1}
\partial_jV(x_0)\partial_kV(x_0)\nonumber\\
&\quad\quad\quad-\frac{1}{p-1}m(x_0)^{-1}\partial_jm(x_0)\partial_kV(x_0)\Big]\int_{\mathbb{R}^N}w_{x_0}^2dy\nonumber\\
&+h^2\Big[\big(\frac{1}{p-1}-\frac{1}{2}\frac{N}{p+1}\big)\left[V(x_0)+\lambda\right]^{-1}
\partial_jV(x_0)\partial_km(x_0)\nonumber\\
&\quad\quad\quad-\frac{1}{p-1}m(x_0)^{-1}\partial_jm(x_0)\partial_km(x_0)\Big]\int_{\mathbb{R}^N}w_{x_0}^{p+1}dy\nonumber\\
&-h^2\int_{\mathbb{R}^N}\Big[\frac{1}{2}\partial_{jk}V(x_0)w_{x_0}^2
-\frac{1}{p+1}\partial_{jk}m(x_0)w_{x_0}^{p+1}\Big]dy+\mathrm{o}(h^2)\,.
\end{align}
Recall that
\begin{align*}
\begin{cases}
w_{x_0}(y)=\big[V(x_0)+\lambda\big]^{\frac{1}{p-1}}m(x_0)^{-\frac{1}{p-1}}w(\sqrt{V(x_0)+\lambda}y)\,,\\
m(x_0)\nabla V(x_0)=\frac{N}{2}\left[V(x_0)+\lambda\right]\nabla
m(x_0)\,,\\
\partial_{ij}G(x_0)=m(x_0)^{-\frac{N}{2}-1}\Big[m(x_0)\partial_{ij}V(x_0)
+(1-\frac{N}{2})\partial_iV(x_0)\partial_jm(x_0)-\frac{N}{2}\left[V(x_0)
+\lambda\right]\partial_{ij}m(x_0)\Big]\,,
\end{cases}
\end{align*}
and the integral identity
\begin{equation*}
\left[V(x_0)+\lambda\right]\int_{\mathbb{R}^N}w_{x_0}^2dy=\frac{2}{N+2}m(x_0)\int_{\mathbb{R}^N}w_{x_0}^{p+1}dy\,.
\end{equation*}
Combining (\ref{id7.1}) and (\ref{id7.9}), we obtain~(\ref{id7.11}).
\end{proof}

\end{document}